\newcommand \id{\mathbbm 1}
\newtheorem{theorem}{Theorem}[section]
\newtheorem{lemma}[theorem]{Lemma}
\newtheorem{proposition}[theorem]{Proposition}
\newtheorem{corollary}[theorem]{Corollary}
\newtheorem{conjecture}[theorem]{Conjecture}
\theoremstyle{remark}
\newtheorem{remark}[theorem]{Remark}
\numberwithin{equation}{section}
\begin{document}

\title[The band structure of a model of spatial random permutation]{The band structure of a model \\ of spatial random permutation}

\author{Yan V. Fyodorov$^1$}
\address{$^1$Department of Mathematics, King's College London}
\email{yan.fyodorov@kcl.ac.uk}
\author{Stephen Muirhead$^2$}
\email{smui@unimelb.edu.au}
\address{$^2$Department of Mathematics, King's College London (current address: School of Mathematics and Statistics, University of Melbourne)}
\subjclass[2010]{60C05, 05A05}
\keywords{Spatial random permutation, band structure, Boltzmann weight, Gaussian fields}
\date{\today}
\thanks{This research was supported by the Engineering and Physical Sciences Research Council (EPSRC) Grant EP/N009436/1 ``The many faces of random characteristic polynomials'' and the Australian Research Council (ARC) Discovery Early Career Researcher Award DE200101467. The authors would like to thank Jeremiah Buckley, Naomi Feldheim and Daniel Ueltschi for enlightening discussions, and in particular Ron Peled for helpful discussions at an early stage. The authors would also like to thank an anonymous referee for detailed comments which improved the presentation of the paper, and also for pointing out corrections to an earlier version.}

\begin{abstract} 
We study a random permutation of a lattice box in which each permutation is given a Boltzmann weight with energy equal to the total Euclidean displacement. Our main result establishes the \textit{band structure} of the model as the box-size $N$ tends to infinity and the inverse temperature~$\beta$ tends to zero; in particular, we show that the mean displacement is of order $\min \{ 1/\beta, N\}$. In one dimension our results are more precise, specifying leading-order constants and giving bounds on the rates of convergence. 

Our proofs exploit a connection, via matrix permanents, between random permutations and Gaussian fields; although this connection is well-known in other settings, to the best of our knowledge its application to the study of random permutations is novel. As a byproduct of our analysis, we also provide asymptotics for the permanents of Kac-Murdock-Szeg\H{o}~(KMS) matrices.
\end{abstract}
\maketitle


\section{Introduction}

A \textit{spatial random permutation} (SRP) is a probability measure on a set of permutations which is biased towards the identity in some underlying geometry. Among the most well-studied models of SRP is the \textit{Mallows model} \cite{M57} in which each permutation $\pi$ of the set $[\![1, N]\!] := \{1, 2, \ldots, N\}$ is assigned weight
\[  P(\pi)  \propto  q^{\text{inv}(\pi)} , \]
where $q \in (0, 1]$ is a parameter and 
\[ \text{inv}(\pi) := | \{  (s, t) : s < t \ \ \text{and} \ \ \pi(s) > \pi(t)  \} | \]
is the \textit{inversion count} of $\pi$; as is well-known, $\text{inv}(\pi)$ equals the minimal number of adjacent transpositions required to bring $\pi$ to the identity, and so the Mallows model can be considered as a SRP arising from the \textit{Kendall tau metric}
\[ d( \pi_1, \pi_2) := \text{minimal number of adjacent transpositions required to bring } \pi_1 \text{ to } \pi_2 .  \]
 In the case $q = 1$, the Mallows model reduces to the classical model of \textit{uniform random permutation}.

\smallskip
The Mallows model is a particularly tractable SRP because it possesses a certain integrable structure \cite{M57, GP16}: conditionally on observing the partial mapping $(\pi(i))_{ 1 \le i  \le k}$, the value of $\pi(k+1)$ is distributed geometrically on the ordered set of remaining sites $[\![1, N ]\!] \setminus \{\pi(i): 1 \le i \le  k\}$. This Markov-type property greatly facilitates computations, and many statistical properties of the Mallows model have recently been derived, for instance the distribution of the longest increasing subsequence \cite{BP15, MS14}, and detailed information on the cycle structure \cite{GP16, M16b}.

\smallskip
A basic statistical property of particular importance is the \textit{band structure} of the Mallows model: if one plots the graph of $(i, \pi(i))$, the majority of points lie inside a strip centred on the diagonal. Moreover, the width of the strip exhibits \textit{crossover} behaviour: as $q \to 1$ and $N \to \infty$, the width is the minimum of $1/(1-q)$ and~$N$, up to leading-order constants. For example, if one considers just the  \textit{mean displacement} $E[|i-\pi(i)|]$, then it is known \cite{BP15} that there exists a constant $c > 0$ such that, for each $i \in [\![1, N]\!]$,
\begin{equation}
\label{e:band}
 c \, \text{min} \left\{ \frac{q}{1-q}, N-1 \right\} \le  E [  |i - \pi(i) | ]   \le \min \left\{  \frac{2q}{1-q} , N- 1 \right\} .   
 \end{equation}
More precise concentration bounds on the displacement $|i - \pi(i)|$ are also available, see \cite{GP16, M16a}. 

\smallskip
 It has been conjectured \cite{GP16} that many different models of SRPs possess similar statistical features to the Mallows model. A more general class of SRPs are the \textit{Boltzmann SRPs} 
 \begin{equation}
\label{e:bsrp}
   P(\pi) \propto e^{-\beta H(\pi) }  ,
   \end{equation}
where $\beta \ge 0$ is an \textit{inverse temperature} parameter and $H(\pi)$ is an \textit{energy function} that depends on the distance from the identity in an underlying geometry; in the infinite-temperature limit $\beta = 0$ this reduces to the uniform random permutation. An important subclass of Boltzmann SRPs -- this time not containing the Mallows model -- arises when considering random bijections on a set of Euclidean particles. To define this subclass, let $\mathcal{T}$ be a finite set of points in $\mathbb{R}^d$ and let $V : \mathbb{R}^d \to \mathbb{R}^+$ be a \textit{potential function}. To each bijection $\pi$ on the set $\mathcal{T}$ one can associate an energy function
\[ H(\pi) := \sum_{x \in \mathcal{T}} V(x - \pi(x) ) , \]
which defines a model of random bijection via the measure \eqref{e:bsrp}; we shall refer to this class of models as \textit{random Euclidean bijections}. Natural choices for the potential include $V(x) = |x|$ and $V(x) = |x|^2$, where $|\cdot|$ denotes the standard Euclidean distance on~$\mathbb{R}^d$; the latter choice is particularly important since it is connected to the classical representation of the Bose gas (see Section \ref{s:lr} below).

\smallskip
In this paper we consider the random Euclidean bijection in which the particles are a subset of the lattice $\mathbb{Z}^d$ and the potential is the \textit{Euclidean distance} $V(x) = |x|$. This model has been considered previously in the literature, usually within the setting of general potential functions $V$ and arbitrary rescaled versions of the lattice $\mathbb{Z}^d$, see, e.g., \cite{B14, BR15, GRU07, M16a}. We focus on this choice of potential because, as we explain in Section \ref{s:h}, the resulting model possesses a tractable structure which, to the best of our knowledge, has not yet been identified in the literature. Let us introduce the model formally now.

\smallskip
Define the size-$N$ lattice box $\mathcal{T}^N :=  [\![1, N]\!]^d \subset \mathbb{Z}^d$. Let $\pi^N$ denote the set of permutations of the set $\mathcal{T}^N$, i.e., the set of bijections $\pi: \mathcal{T}^N \to \mathcal{T}^N$. For each $\pi \in \pi^N$, define the energy
 \begin{equation}
 \label{e:en}
  H^N(\pi) := \sum_{x \in \mathcal{T}^N} |x - \pi(x) | .
  \end{equation}
 For each $N\ge 1$ and $\beta \ge 0$, denote by $P_\beta^N$ the Boltzmann distribution associated to $H^N$ with inverse temperature $\beta$:
 \begin{equation}
\label{e:pn}
P_{\beta}^{N}(\pi) := \frac{1}{Z_\beta^N} \, e^{-\beta H^N(\pi) }  ,
\end{equation}
where $Z_\beta^N$ denotes the \textit{partition function}
\begin{equation}
\label{e:zn}
 Z^N_\beta :=   \sum_{\pi \in \pi^N} e^{-\beta H^N(\pi) }  =  \sum_{\pi \in \pi^N} e^{-\beta\sum_{x \in \mathcal{T}^N} |x-\pi(x)|}  . 
\end{equation}

\smallskip
We are interested in the statistical properties of the random permutation $P_\beta^N$ as $N \to \infty$ and $\beta \to 0$ at a particular rate. Observe that the regime $\beta N^{d+1} \to 0$ is trivial, since for all bijections $\pi$
\[ \beta H^N(\pi) = \beta \sum_{x \in \mathcal{T}^N} |x - \pi(x) | \le \beta |\mathcal{T}^N| \, \text{diam}(\mathcal{T}^N)  \le \sqrt{d} \beta N^{d+1}  , \]
and so if $\beta N^{d+1} \to 0$ the model converges to the uniform random permutation on $\mathcal{T}^N$ in the sense that
\[ \max_{\pi} | P_{\beta}^{N}(\pi)   / P_0^N(\pi)  - 1|  \to 0 . \]
On the other hand, the behaviour of this model in other regimes is rather complex, depending on a delicate balance between energy and entropy. Further, unlike for the Mallows model, as far as we know there is no integrable structure to exploit.

\smallskip
Our primary focus is on the \textit{band structure} of the model, analogous to~\eqref{e:band}. To this end, define the \textit{mean displacement per site}
\[   D^N_\beta :=  \frac{1}{N^d}  \sum_{x \in \mathcal{T}^N}  E[|x - \pi(x)|]  = \frac{1}{N^d}  E [  H^N(\pi) ] , \]
where $E[\cdot]$ is the expectation operator associated to the measure $P^N_\beta$ (note that we have dropped the explicit dependence on $N$ and $\beta$). It is not hard to see that $D^N_\beta$ is a non-increasing function of $\beta$. Our main result establishes the asymptotic growth-rate of $D^N_\beta$ as $\beta \to 0$ and $N \to \infty$. Similarly to in \eqref{e:band}, we observe \textit{crossover behaviour} for the band-width; the growth rate of $D^N_\beta$ is the minimum of  $ 1/ \beta$ and $ N$, up to leading-order constants. 
 
 \smallskip
 To state our main result precisely, let us first introduce some asymptotic notation. The inverse temperature parameter $\beta = \beta(N)$ will always be implicitly varying with~$N$. For two functions $f = f(N)$ and $g = g(N)$ we write $f \ll g$ or $f = o(g)$ if $|f|/|g| \to 0$ as $N \to \infty$, and $f \gg g$ or $g = o(f)$ if $|f|/|g| \to \infty$ as $N \to \infty$. Similarly, we write $f \sim g$ if $f/g \to 1$ as $N \to \infty$. Finally, we write $f = O(g)$ if there exists a $c > 0$ such that $|f|/|g| < c$ for sufficiently large $N$, and $f = \Theta(g)$ if $f = O(g)$ and $g = O(f)$ both hold.
 
 \smallskip
Our first main result establishes the band structure of the model in all dimensions:
 
\begin{theorem}[Band structure of the model]
\label{t:main1}
Let $d \ge 1$. Then, as $N \to \infty$,
\[ D^N_\beta  = \begin{cases}
 \Theta(1/\beta),  & \text{ if } 1/N \ll \beta \ll 1 , \\
    \Theta(N)   , & \text{ if } \beta =O(1/N) . \\
  \end{cases}
    \]
  Moreover, there exists a non-increasing function $f^d : \mathbb{R}^+ \to \mathbb{R}^+$ and a countable set $\mathcal{C}^d \subset \mathbb{R}^+$ such that if $c \in \mathbb{R}^+ \setminus \mathcal{C}^d$ and $\beta = c/N$ then, as $N \to \infty$,
    \[ D^N_\beta =  N f^d(c) \times (1 + o(1) ) .    \]
\end{theorem}

In dimension one we describe the band structure in a more precise way, providing leading-order constants and rates of convergence:

 \begin{theorem}[Band structure of the model in dimension one]
\label{t:main2}
Let $d = 1$. Then there exists a smooth non-increasing function $f^1 : \mathbb{R}^+ \to \mathbb{R}^+$ such that, as $N \to \infty$,
\[ D^N_\beta = \begin{cases}
   1/\beta \times \left( 1 +  O \left( \beta^{1/2}   \right) + O\left( (\beta N)^{-1/2} \right)  \right) ,  &  \text{ if }  1/N \ll \beta \ll 1, \\
     N f^1(c) \times ( 1 + o(1) ),  &   \text{ if }  \beta \sim c /N  , \ c > 0, \\
       N/3  \times  \left(1 + o(1)   \right)   , & \text{ if } \beta \ll 1/N . \\
  \end{cases}
    \]
    The function $f^1$ satisfies $\lim_{c \to 0} f^1(c) = 1/3$ and $f^1(c) = \Theta(1/c)$ as $c \to \infty$, and can be represented as
      \begin{equation}
   \label{e:frep}
    f^1(c) = - \frac{d}{dc} \left(  - h_c(0)^2 + \int_{s \in [0, c]}  \frac{2}{c}  \log h_c(s) - \frac{1}{2} \left( h_c(s) +  \dot{h}_c(s) \right)^2  \, ds \right)  
    \end{equation}
    where $h_c$ is the positive analytic function that is the unique solution to the boundary value ODE on~$[0, c]$
    \begin{equation}
    \label{e:ode}
     \ddot{y} =  -\frac{2}{c y}  +  y  \ , \quad \dot{y}(0) = y(0) = y(c) = -\dot{y}(c) .
     \end{equation}
\end{theorem}

\begin{remark}
We refer to the regime $\beta \sim c / N$ as \textit{critical}, and the regimes $1/N \ll \beta$ and $\beta \ll 1/N$ respectively as \textit{subcritical} and \textit{supercritical}. The critical regime for $d = 1$ was previously studied in \cite{M16a}, where the existence of the function $f^1$ was established and $f^1$ was shown to be \textit{continuous} and \textit{strictly decreasing}. Our analysis gives a new description of the function $f^1$ that, in addition, shows that it is \textit{smooth} and permits an analysis of its asymptotic behaviour (on the other hand, we are unable to deduce that $f^1$ is strictly decreasing from our description, only that it is non-increasing); see Figure~\ref{fig:f} for a rough illustration of this function. The remaining parts of Theorems \ref{t:main2}, and the entirety of Theorem~\ref{t:main1} in the case $d \ge 2$, are to the best of our knowledge completely new. We give more details on connections to the literature in Section~\ref{s:lr}. 
\end{remark}

\begin{remark}
\label{r:limit}
The exceptional set $\mathcal{C}^d$ arises in Theorem \ref{t:main1} because, in the case $d \ge 2$, we cannot rule out that $f^d$ has a countable set of jump discontinuities (see Remark \ref{r:except}). Similarly, we consider only $\beta = c/N$ rather than $\beta \sim c/N$ since this set could be dense. However, we believe $f^d$ to be smooth in all dimensions, which if true would allow the results in the critical regime to be extended to $\beta \sim c/N$ for all $c > 0$.
\end{remark}

\begin{remark}
\label{r:hyper}
In the uniform case, the mean displacement per site satisfies $ D^N_0 / N \to c^d$, where $c^d > 0$ is the \textit{hypercube line picking constant}
\[   c^d = \mathbb{E}[ |U_1-U_2|] , \]
where $U_i$ are independent random variables uniformly distributed over the hypercube $[0, 1]^d$. Integral representations for $c^d$ are known \cite{BBC10}, but no closed-form expression exists in general; on the other hand, a simple symmetry argument shows that $c^1 = 1/3$.

\smallskip
While our results in Theorem \ref{t:main2} confirm that $c^1  =  1/3 =  \lim_{c \to 0} f^1(c)$,  we are unable to show in general that
\[ c^d = \lim_{c \to 0} f^d(c) , \]
i.e.\ whether the mean displacement per site in the supercritical regime is necessarily first-order equivalent to the mean displacement in the uniform case:
\[   D^N_\beta / D^N_0 \to 1  \quad \text{if } \beta \ll 1/N .\]
\end{remark}

\begin{remark}
As explained in Section \ref{s:vf}, the representation of the function $f^1$ in \eqref{e:frep} arises out of a variational formula in the setting of large deviation theory for Gaussian fields. In \cite{M16a} the function $f^1$ was also represented via a variational formula that arose out of large deviation theory in a different setting (see Section \ref{s:lr} for details), and so one consequence of Theorem~\ref{t:main2} is an equivalence between two ostensibly unrelated variational formulae (see Proposition \ref{p:vf}).
\end{remark}

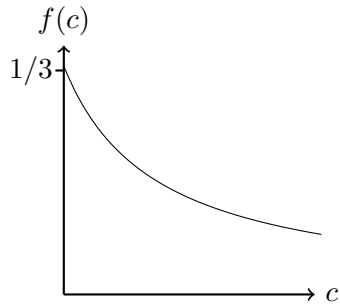
\begin{figure}[ht]
\begin{tikzpicture}[scale=1.1]	
\draw[->, thick] (0,0)--(3,0) node[right]{$c$};
\draw[->, thick] (0,0)--(0,3) node[above]{$f^1(c)$};
\draw (0,2.7) [-, thick] (-0.1,2.7)--(0,2.7) node[left]{$1/3$};
 \draw[scale=1.1,domain=0:2.8,smooth,variable=\x] plot ({\x},{2.5/(1+\x))});
\end{tikzpicture}
\caption{A rough illustration of the function $f^1$ in Theorem \ref{t:main2}.}
\label{fig:f}
\end{figure}

\begin{remark}
Our techniques are robust enough to apply to various modifications and generalisations of the model. First, instead of the lattice box $[\![1, N]\!]^d$ one could work instead in the more general setting of rescaled lattice domains $\mathbb{Z}^d \cap N D$, where $D \subset \mathbb{R}^d$ is an arbitrary smooth compact domain and $N D = \{ N x : x \in D \}$. In this setting we could recover the results of Theorems~\ref{t:main1} and \ref{t:main2} without change to the techniques. 

\smallskip 
Second, one could dispense with the requirement that the particles be confined to a lattice and instead work with \textit{disordered particles}. For example, prior to defining $P^N_\beta$ one could choose~$N^d$ points uniformly at random in the box $[0, N]^d$ and define $P^N_\beta$ by analogy to \eqref{e:pn}. Then, the results in Theorem \ref{t:main1}, as well as the (super-)critical regimes $\beta = O(1/N)$ in Theorem \ref{t:main2}, would still hold with probability tending to one since one can check that the relevant techniques are still valid for disordered points. On the other hand, the results in the subcritical regime $\beta \gg 1/N$ of Theorem \ref{t:main2} depend on precise equal spacing between the particles (see the analysis in Section~\ref{s:ou}), and so in the disordered case these results would not follow from our techniques. 
\end{remark}

\subsection{Connections to the literature}
\label{s:lr}
The model that we study has been considered previously in the literature, notably in \cite{B14, BR15, F91, GRU07, M16a, M16b}. In this section we give an account of this literature, and also discuss related results on a similar model in which the particles $x_i$ are not confined to a lattice, see, e.g., \cite{BU09, EP17}.

\smallskip
The most pertinent work is \cite{M16a}, which gave a detailed treatment of the one-dimensional model in the critical regime $\beta = cN$ using the concept of \textit{permuton limits}. Let us explain the content of this work, developed in a somewhat general setting. Consider a random permutation $P^N$ of the set $[\![1, N]\!]$. One way to encode the properties of $P^N$ is via the induced law on the set of empirical measures
\[ \nu(\pi) =  \frac{1}{N} \sum_{i=1}^N \delta_{ (i/N, \pi(i)/N ) } , \]
where $\delta_{(x,y)}$ denotes a unit $\delta$-mass at the point $(x, y)$ of the unit square $S$. Observe that every empirical measure $\nu(\pi)$ is a \textit{copula}, i.e., a probability measure on $S$ such that the marginals are uniform probability measures. Let $\mathcal{M}$ denote the space of copulas equipped with the Borel $\sigma$-algebra induced by the topology of weak convergence. A \textit{permuton} is a law (i.e.\ probability measure) on $\mathcal{M}$. Denote by $\mu^N$ the permuton induced by the empirical measures of $\pi$ under~$P^N$.

\smallskip
In \cite{M16a, M16b}, general sequences of SRPs are considered in which the associated permutons~$\mu^N$ converge weakly, as $N \to \infty$, to a permuton limit $\mu^\infty$. Let us describe the general setting of this result, and show how it implies Theorem~\ref{t:main2} in the case $\beta = cN$. Let $f$ be a continuous function on $S$. For each $N \ge 1$ and $c > 0$ consider the random permutation on $[\![1, N]\!]$ defined by
\begin{equation}
\label{e:mm}
P^N(\pi) \propto e^{c \sum_{i=1}^N f(i/N, \pi(i)/N )} , 
\end{equation}
with $\mu^N$ the associated permuton. Then the main result of \cite{M16a} is that, as $N \to \infty$, $\mu^N$ converges weakly to a permuton limit $\mu$, and moreover $\mu$ is supported on a single copula $\nu$ that depends on $f$ and $c$. As a consequence, one deduces that, as $N \to \infty$,
\[ \frac{1}{N} \sum_i^N f(i/N, \pi(i)/N)  \to \int_{(x, y) \in S} f(x, y) \, d \nu  (x, y) \quad \text{in probability}, \]
and in particular, letting $E^N$ denote the expectation operator associated to $P^N$,
\[   \frac{1}{N} \sum_i^N E^N f(i/N, \pi(i)/N)  \to   \int_{(x, y) \in S} f(x, y) \, d \nu (x, y) .\]
Specialising to the case that $f(x, y) = |x-y|$, one immediately deduces Theorem \ref{t:main2} in the case that $\beta = c/N$. 

\smallskip
Although not considered in \cite{M16a}, the result can likely also be adapted to cover the case $\beta \sim c/N$, which corresponds to defining the model \eqref{e:mm} with a varying sequence of parameters $c^N \to c$. It is also possible that the approach could be adapted to higher-dimensional analogues of \eqref{e:mm}, although additional complications may arise. Such a generalisation would give an alternative way to prove Theorem \ref{t:main1} in the regimes $\beta = O(1/N)$ and $\beta \sim c/N$ (and indeed should allow one to remove the exceptional set $\mathcal{C}^d$ in Theorem \ref{t:main1}).

\smallskip
The proof in \cite{M16a} relies on a large deviation principle, in the space $\mathcal{M}$, for the sequence of permutons associated with the uniform random permutation. Arising naturally out of this approach is a variational formula for the function $f$ in Theorem \ref{t:main2}. In particular, one deduces that $f(c) = \frac{d}{dc} V_P(c)$, where $V_P(c)$ is the solution to the variational problem in the space of permutons
\[  (VP\!:\!P) \qquad \text{maximise}  \quad  \frac{1}{c}   \int_{(x, y)\in S} |x-y| \, d \lambda - D(\lambda \| u )  \quad \text{over} \quad \lambda \in \mathcal{M}, \]
where $D(\cdot \| \cdot)$ is the Kullback-Leibler divergence and $u$ denotes the Lebesgue measure on the unit square. From this formula one readily deduces that $f$ is continuous and strictly increasing. We remark that $(VP\!:\!P)$ is quite different from the variational formula for $f$ generated by our approach (see Section \ref{s:vf}); on the other hand, both variational formulae ultimately arise out of large deviation principles, albeit in somewhat different settings.

\smallskip
In \cite{B14, BR15, F91, GRU07} the model in the case $\beta > 0$ fixed was considered, addressing in particular the question of whether the model has an infinite volume limit. In our setting, this is the question of whether one can define a measure $P_\beta^\infty$ on the set of bijections of $\mathbb{Z}^d$ such that the measures $P^N_\beta$ converge to $P_\beta^\infty$ in an appropriate sense as $N \to \infty$. As was shown in \cite{B14, BR15}, this is possible for any fixed $\beta > 0$ but not in the regime $\beta \to 0$. It is natural to expect that infinite volume limits are associated with the convergence of the mean displacement per site
\[ \lim_{N \to \infty} D^N_\beta = \bar c_\beta , \quad \bar c_\beta > 0 ,\]
but we do not pursue this connection here.

\smallskip
As mentioned in the introduction, random Euclidean bijections are of physical importance since they are related to classical representations of quantum gases. In particular, the ideal Bose gas is classically represented as a finite set of particles $\mathcal{T} = (x_i)_i$ in a compact set~$D \subset \mathbb{R}^d$ whose probability density with respect to the Lebesgue measure is given by
\[     f(x_1, \ldots, x_N ) \propto \sum_\pi  e^{- \sum_{i} | x_i - \pi(x_i) |^2 } ,   \]
where $\pi$ runs over all bijections of the particles $x_i$. This is the marginal of the measure on $(x_i; \pi)$ given by
\begin{equation}
\label{e:ann}
P(x_i; \pi) \propto e^{- \sum_i | x_i - \pi(x_i) |^2 } . 
\end{equation}
Feynman argued \cite{F53} that the occurrence of macroscopic cycles in \eqref{e:ann} is related to the onset of Bose-Einstein condensation in the ideal Bose gas; this was first shown mathematically forty years later \cite{S91, S02}. In \cite{BU09, EP17} the model was studied in the case of general potential
\[  P(x_i; \pi) \propto e^{- \sum_{i} V( x_i - \pi(x_i) ) } , \]
(the potential $V(x) = |x|$ is included in the class considered in \cite{BU09} at least for $d = 3$, but is not included in the class considered in \cite{EP17}). Note that particles in this model are \textit{not} confined to a lattice, and instead their positions influence the weighting of the ensemble; in this sense the ideal Bose gas can be considered as an \textit{annealed} version of the random Euclidean bijections we study in this paper. Remarkably, after averaging with respect to the positions the underlying permutation model turns out to possess an integrable structure that is not present when the points are fixed \cite{BU09}, which greatly facilitates the analysis of the ideal Bose gas.

\subsection{Overview of our analysis: Permutations, permanents and Gaussian fields} 
\label{s:h}
In this section we outline the central ideas of our analysis, which exploits a connection between SRPs and centred Gaussian fields, via the permanents of certain real symmetric matrices.

\smallskip
A preliminary observation is the relation between the mean displacement per site $D^N_\beta$ and the partition function $Z^N_\beta$ introduced in \eqref{e:zn}. In particular, we have the identity
\begin{align}
\label{e:s1}
\nonumber D^N_\beta &=  \frac{1}{N^d} \sum_{x \in \mathcal{T}^N}  E^N_\beta \left[  |x-\pi(x)| \right] = \frac{1}{N^d} \sum_{\pi \in \pi^N} P^N_\beta(\pi) \sum_{x \in \mathcal{T}^N}|x-\pi(x)|  \\
\nonumber & =\frac{1}{N^d} \frac{1}{Z^N_\beta}\sum_{\pi \in \pi^N} \bigg[\sum_{x \in\mathcal{T}^N} |x -\pi(x)| \bigg]\,e^{-\beta\sum_{x \in \mathcal{T}^N} |x -\pi(x)|} \\
\nonumber & = - \frac{1}{N^d} \frac{1}{Z^N_\beta} \frac{\partial}{\partial \beta} \bigg(  \sum_{\pi \in \pi^N} e^{-\beta\sum_{x \in \mathcal{T}^N} |x-\pi(x)|} \bigg) \\
 & = -  \frac{\partial}{\partial \beta} \Big( \frac{1}{N^d}  \log{Z^N_\beta} \Big) .
\end{align}
In the language of statistical mechanics, this is the usual relation between the mean energy and the temperature derivative of the free energy associated with a Boltzmann distribution.

\smallskip
We next observe that the partition function $Z^N_\beta$ can be written as the permanent of a certain symmetric matrix. Recall that the \textit{permanent} of an $n \times n$ matrix $A = (A_{i, j})$ is defined as
\[  \text{perm}(A) :=\sum_{\pi} \prod_{i=1}^nA_{i,\pi(i)}  ,\]
where $\pi$ runs over all permutations of the set $[\![1, N]\!]$. For each $N \ge 1$ and $\beta \ge 0$, one can write
\begin{equation}
\label{e:s2}
  Z^N_\beta =  \sum_{\pi \in \pi^N} e^{-\beta\sum_{x \in \mathcal{T}^N} |x-\pi(x)|}  =  \sum_{\pi \in \pi^N} \Pi_{x \in \mathcal{T}^N} e^{-\beta |x-\pi(x)|}  =   \text{perm}\{A^N_\beta\} 
  \end{equation}
where $A^N_\beta = ( (A^N_\beta)_{x,y} )_{x,y \in \mathcal{T}^N}$ is the $N^d \times N^d$ symmetric matrix with elements
\[  (A^N_\beta)_{x, y}  :=e^{-\beta |x-y|} . \]

\smallskip
The final step is to invoke an identity linking the permanent of a symmetric positive-definite matrix to certain moments of centred Gaussian vectors. Recall that to each $n \times n$ symmetric positive-definite matrix $A = (A_{i, j})_{1 \le i,j \le n}$ one can associate a centred Gaussian vector $(X_i)_{1 \le i \le n}$ with covariance matrix $A$, i.e., such that
\[ \mathbb{E}[X_i] = 0 \quad \text{and} \quad \mathbb{E}[X_i X_j] = A_{i,j} .    \]
The following formula is well-known \cite{F06, LW12, PV05}; to the best of our knowledge it first appeared in \cite{R62}, and notably was used in \cite{AdR98} to resolve the complex case of the \textit{polarisation constant conjecture}.

\begin{lemma}[Reed's formula]
\label{l:reed}
Let $A$ be an $n \times n$ symmetric positive-definite matrix. Then
\[   \text{perm}(A) = 2^{-n} \, \mathbb{E}\left[    \Pi_{i = 1}^n (X_i^2 + Y_i^2)   \right] ,     \]
where $X_i$ and $Y_i$ are independent copies of a centred Gaussian vector with covariance matrix~$A$.
\end{lemma}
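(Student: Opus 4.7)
The plan is to deduce Reed's formula from the \emph{complex} version of Wick's theorem (Isserlis' theorem) by packaging the pair $(X_i, Y_i)$ into a single circularly symmetric complex Gaussian. Concretely, since $(X_i)_{1 \le i \le n}$ and $(Y_i)_{1 \le i \le n}$ are independent centred Gaussian vectors each with covariance matrix $A$, I would define
\[ Z_i := \frac{X_i + \sqrt{-1}\, Y_i}{\sqrt 2}, \qquad 1 \le i \le n. \]
A direct computation using the independence of $X$ and $Y$ and the fact that they share the covariance $A$ shows that
\[ \mathbb{E}[Z_i \overline{Z_j}] = \tfrac{1}{2}\bigl(\mathbb{E}[X_i X_j] + \mathbb{E}[Y_i Y_j]\bigr) = A_{i,j}, \qquad \mathbb{E}[Z_i Z_j] = \tfrac{1}{2}\bigl(\mathbb{E}[X_i X_j] - \mathbb{E}[Y_i Y_j]\bigr) = 0, \]
so $(Z_i)$ is a proper (circularly symmetric) complex Gaussian vector with covariance $A$.

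Next, since $|Z_i|^2 = (X_i^2 + Y_i^2)/2$, I would rewrite the right-hand side of the claimed formula as
\[ 2^{-n} \mathbb{E}\Bigl[\prod_{i=1}^n (X_i^2 + Y_i^2)\Bigr] = \mathbb{E}\Bigl[\prod_{i=1}^n |Z_i|^2\Bigr] = \mathbb{E}\bigl[Z_1 \cdots Z_n \overline{Z_1} \cdots \overline{Z_n}\bigr]. \]
At this point the core step is the application of the complex Wick theorem: for a circularly symmetric complex Gaussian vector one has
\[ \mathbb{E}\bigl[Z_{i_1} \cdots Z_{i_n} \overline{Z_{j_1}} \cdots \overline{Z_{j_n}}\bigr] = \sum_{\sigma \in S_n} \prod_{k=1}^n \mathbb{E}[Z_{i_k} \overline{Z_{j_{\sigma(k)}}}], \]
all other pairings (those coupling two unbarred or two barred variables) vanishing because $\mathbb{E}[Z_i Z_j] = 0$. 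Specialising to $i_k = j_k = k$ and using $\mathbb{E}[Z_k \overline{Z_{\sigma(k)}}] = A_{k,\sigma(k)}$ gives
\[ \mathbb{E}\Bigl[\prod_{i=1}^n |Z_i|^2\Bigr] = \sum_{\sigma \in S_n} \prod_{k=1}^n A_{k, \sigma(k)} = \mathrm{perm}(A), \]
which is precisely the desired identity.

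The only non-cosmetic step is the invocation of the complex Wick formula. If one prefers a self-contained argument, I would instead expand $\prod_i (X_i^2 + Y_i^2) = \sum_{S \subseteq [n]} \prod_{i \in S} X_i^2 \prod_{j \notin S} Y_j^2$, use independence of $X$ and $Y$ to factor each expectation, and apply the real Wick (Isserlis) formula in each factor to obtain a sum over pairings within $S$ and within $S^c$. Re-indexing these pairings by the permutations they induce (each permutation $\sigma \in S_n$ corresponds to $2^n$ pairings arising from the two choices $\{X_i X_{\sigma(i)}\}$ or $\{Y_i Y_{\sigma(i)}\}$ at each cycle edge) collects the $2^n$ factor and yields $\mathrm{perm}(A)$. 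The combinatorial bookkeeping of pairings and permutations is the only technically delicate step, but is entirely routine; the cleanest route is the complex-Gaussian packaging above.
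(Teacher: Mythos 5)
The paper states Lemma~\ref{l:reed} without proof, merely citing \cite{R62} and the survey references, so there is no internal proof to compare against. Your argument is correct and is in fact the cleanest standard derivation of Reed's identity: forming $Z_i = (X_i + \sqrt{-1}\,Y_i)/\sqrt{2}$ gives a circularly symmetric complex Gaussian vector with $\mathbb{E}[Z_i\overline{Z_j}] = A_{i,j}$ and $\mathbb{E}[Z_iZ_j] = 0$ (the cross terms vanish by independence of $X$ and $Y$), the right-hand side of Reed's formula becomes $\mathbb{E}\bigl[\prod_i |Z_i|^2\bigr]$, and the complex Wick (Isserlis) theorem collapses this to $\sum_{\sigma\in S_n}\prod_k A_{k,\sigma(k)} = \mathrm{perm}(A)$, since the vanishing pseudo-covariances kill all $Z$--$Z$ and $\overline{Z}$--$\overline{Z}$ contractions. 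All intermediate computations check out, so this is a complete proof.

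One small caveat on the self-contained alternative you sketch at the end: the phrasing that ``each permutation $\sigma$ corresponds to $2^n$ pairings arising from the two choices $\{X_iX_{\sigma(i)}\}$ or $\{Y_iY_{\sigma(i)}\}$ at each cycle edge'' is not literally right, because the $X$/$Y$ assignment is a per-index (hence per-cycle) choice, not an independent per-edge choice. The aggregate count of $2^n$ per permutation is nevertheless correct: for each cycle of $\sigma$ of length $\ell$, one multiplies the $2$ ways to place that cycle in $S$ or $S^c$ by the number of Isserlis pairings of the doubled indices that realize that cycle as an undirected multigraph, and after using the symmetry of $A$ to identify the two orientations of a cycle of length $\ge 3$, the product is exactly $2^{\ell}$. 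So the alternative route works, but its bookkeeping needs to be spelled out more carefully than your one-sentence gloss suggests; you are right that the complex-Gaussian packaging is the cleaner path, and it is the one to present.
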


Let us now combine the above observations and apply them to our setting. Observe first that Lemma \ref{l:reed} is applicable to $A^N_\beta$ since one can check that $A^N_\beta$ is positive-definite for each $N \ge 1$, $\beta > 0$ and dimension $d \ge 1$. The positive-definiteness of $A^N_\beta$ is a very special feature of the potential $V(x) = |x|$ that we consider, and ultimately derives from the fact that the \textit{Laplacian kernel} on $\mathbb{R}^d$ (also known as the \textit{Mat\'{e}rn kernel with shape parameter} $\nu = 1/2$)
\begin{equation}
\label{e:la}
\kappa(s, t) := e^{- |s-t| }
\end{equation}
is positive-definite in each dimension $d \ge 1$. Another notable Boltzmann SRP with this feature is the one given by the potential $V(x) = |x|^2$, arising naturally in the study of the ideal Bose gas (see Section \ref{s:lr} above). 

\smallskip
The centred Gaussian vector with covariance matrix $A^N_\beta$ has a natural description as the restriction of a continuous Gaussian field to a rescaled lattice box. Let $\Psi$ denote the stationary, almost surely continuous, centred Gaussian field on $\mathbb{R}^d$ with the Laplacian covariance kernel~\eqref{e:la}, i.e., such that
\[ \mathbb{E}[ \Psi(s) \Psi(t) ] = e^{-|s-t| } ; \]
such a Gaussian field exists by Kolmogorov's theorem. In one dimension, $\Psi$ is the classical \textit{Ornstein-Uhlenbeck process}, which as well as being Gaussian also enjoys the \textit{Markov property}; this fact is crucial for obtaining more detailed results in $d=1$ (see, however, the comments in Remark \ref{r:dmp} below on the existence of a \textit{(pseudo-)domain Markov property} in all odd dimensions). For each $N \ge 1$ and $\beta > 0$, define the rescaled lattice box
 \[ \mathcal{T}^N_\beta := \beta [\![0, N-1]\!]^d \subset \beta \mathbb{Z}^d . \]
Combining \eqref{e:s1}, \eqref{e:s2} and Lemma \ref{l:reed}, we obtain the following identities that underpin our proofs of  Theorems \ref{t:main1} and \ref{t:main2}:

\begin{proposition}
\label{p:id}
For each $N \ge 1$ and $\beta > 0$,
\[  Z^N_\beta = \text{perm}(A_\beta^N) = 2^{-N^d} \,  \mathbb{E}\left[    \Pi_{x \in \mathcal{T}^N_\beta} (X_x^2 + Y_x^2) \right]   \]
and
\[ D^N_\beta =  - \frac{\partial}{\partial \beta} \left(  \frac{1}{N^d} \log Z^N_\beta \right)  = - \frac{\partial}{\partial \beta} \left( \frac{1}{N^d} \log \mathbb{E}\left[    \Pi_{x \in \mathcal{T}^N_\beta} (X_x^2 + Y_x^2)   \right] \right)  , \]
where $X$ and $Y$ are independent copies of the centred Gaussian field $\Psi$. 
\end{proposition}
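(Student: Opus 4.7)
The plan is to chain together the three ingredients already isolated in the text: the free-energy identity \eqref{e:s1}, the permanent representation of $Z^N_\beta$ in \eqref{e:s2}, and Reed's formula (Lemma \ref{l:reed}). The only genuine work is to (i) verify that Reed's formula is applicable to $A^N_\beta$ and (ii) identify the associated centred Gaussian vector with the restriction of $\Psi$ to the rescaled lattice box $\mathcal{T}^N_\beta$. Once these are in hand, both displayed identities follow by direct substitution.

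For step (i), $A^N_\beta$ is manifestly symmetric, so it suffices to check positive-definiteness. I would invoke the fact that the Laplacian kernel $\kappa(s,t) = e^{-|s-t|}$ is strictly positive-definite on $\mathbb{R}^d$ in every dimension: its spectral measure is absolutely continuous with strictly positive Lebesgue density (the Fourier transform of $e^{-|x|}$), so by Bochner's theorem any Gram-type matrix $(\kappa(x_i,x_j))_{i,j}$ built from a finite set of distinct points $x_i \in \mathbb{R}^d$ is positive definite. Applying this with the points $\beta x$, $x \in \mathcal{T}^N$, and using the identity $e^{-\beta|x-y|} = e^{-|\beta x - \beta y|}$, gives positive-definiteness of $A^N_\beta$.

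For step (ii), the same identity $e^{-\beta|x-y|} = e^{-|\beta x - \beta y|}$ makes clear that the centred Gaussian vector with covariance $A^N_\beta$ is precisely $(\Psi(\beta x))_{x \in \mathcal{T}^N}$; by stationarity of $\Psi$, this has the same joint law as $(\Psi(y))_{y \in \mathcal{T}^N_\beta}$, the harmless shift between $\beta \cdot [\![1,N]\!]^d$ and $\mathcal{T}^N_\beta = \beta \cdot [\![0,N{-}1]\!]^d$ not affecting the distribution. Invoking Lemma \ref{l:reed} then gives
\[ Z^N_\beta = \mathrm{perm}(A^N_\beta) = 2^{-N^d}\, \mathbb{E}\Big[\Pi_{x \in \mathcal{T}^N_\beta}(X_x^2 + Y_x^2)\Big], \]
and substituting this expression for $Z^N_\beta$ into \eqref{e:s1} yields the second identity. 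There is no serious obstacle: the statement is really a corollary of the preceding discussion, and the only point to be careful about is recording the rescaling $\mathcal{T}^N \to \mathcal{T}^N_\beta$ that transfers the $\beta$-dependence from the matrix entries into the sampling locations of the field.
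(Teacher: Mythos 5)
Your proof is correct and takes essentially the same route as the paper: the paper presents Proposition~\ref{p:id} as an immediate combination of \eqref{e:s1}, \eqref{e:s2} and Lemma~\ref{l:reed}, asserting that positive-definiteness of $A^N_\beta$ "can be checked" via the positive-definiteness of the Laplacian kernel and noting the identification of the Gaussian vector as $\Psi$ restricted to the rescaled box. You merely fill in the Bochner-theorem justification for strict positive-definiteness and record the (correct, stationarity-based) shift between $\beta[\![1,N]\!]^d$ and $\mathcal{T}^N_\beta = \beta[\![0,N-1]\!]^d$, both of which the paper takes for granted.
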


The remainder of the paper is essentially concerned with estimating the quantity
\begin{equation}
\label{e:e}
E^N_\beta := \mathbb{E}\left[    \Pi_{x \in \mathcal{T}^N_\beta} (X_x^2 + Y_x^2)   \right]   , 
\end{equation}
for which we use different techniques depending on the regime and the dimension. Let us state the main consequence, which is to establish asymptotics for the (log-)partition function:

 \begin{theorem}[Asymptotics for the partition function]
 \label{t:part}
For each $d \ge 1$, there exists a continuous, strictly decreasing, convex function $g^d: \mathbb{R}^+ \to \mathbb{R}$ satisfying $\lim_{c \to 0} g^d(c) = 0$ and $g^d(c) = - d \log c + O(1)$ as $c \to \infty$, such that, as $N \to \infty$,
 \[  \frac{1}{N^d} \log Z^N_\beta  = \begin{cases}
   \log(1/\beta^d) + O(1)  ,  &  \text{ if }  1/N \ll \beta \ll 1 , \\
    \frac{1}{N^d} \log ( ( N^d)! )  + g^d(c) + o(1)  ,  &  \text{ if }  \beta \sim c/N , \ c > 0 , \\
  \frac{1}{N^d} \log ( ( N^d)! ) + o(1)  , & \text{ if } \beta \ll 1/N . \\
  \end{cases}
    \]
In the case $d = 1$, the function $g^1$ is smooth, satisfies
\[ g^1(c) = -c/3 + O(c^2) \quad \text{as } c \to 0,\]
 and, as $N \to \infty$,
  \[ \frac{1}{N^d} \log Z^N_\beta  = \begin{cases}
  \log(1/\beta ) + \log 2 - 1 + O(\beta) + O(1/(\beta N)) ,  &  \text{ if }  1/N \ll \beta \ll 1, \\
 \frac{1}{N} \log (N!) + g^1(c) + o(1) ,  &   \text{ if }  \beta \sim c / N  , \ c > 0, \\
   \frac{1}{N} \log (N!)  -  \beta N/3  +  o(\beta N)    , & \text{ if } \beta \ll 1/N . \\
  \end{cases}
    \]
  The function $g^1$ has the representation
    \begin{equation}
    \label{e:g1}
     g^1(c) = 1 - h_c(0)^2 + \int_{s \in [0, c]}  \frac{2}{c} \log h_c(s) - \frac{1}{2} \left( h_c(s) +   \dot{h}_c(s) \right)^2  \, ds  
     \end{equation}
    where $h_c$ is the positive analytic function on $[0,c]$ that is the unique solution to the ODE in~\eqref{e:ode}.
 \end{theorem}
 
 \begin{remark}
 Since $Z^N_\beta = \text{perm}(A_\beta^N)$, Theorem \ref{t:part} also establishes the (log-)asymptotics of the permanent of the matrix $A^N_\beta$, which may be of independent interest. In one dimension such matrices are often called \textit{Kac-Murdoch-Szeg\H{o}} (KMS) matrices, and the asymptotics of their permanents have been considered elsewhere in the literature. For instance in \cite{LW12} it is proven that, for $d = 1$ and fixed $\beta > 0$,
 \[  \limsup_{N \to \infty}  \frac{1}{N} \log \rm{perm}(A^N_\beta) \le \log \Big( \frac{1+e^{-\beta}}{1-e^{-\beta}} \Big) .  \]
As $\beta \to 0$, 
 \[  \log \Big( \frac{1+e^{-\beta}}{1-e^{-\beta}} \Big) =   \log(2/\beta) + O(\beta)  ,\]
 and so, comparing to Theorem \ref{t:part}, this bound is weaker than ours in the regime $\beta \ll 1$.
 \end{remark}
 
 \begin{remark}
 \label{r:except}
The functions $f^d$ and $g^d$ in Theorems \ref{t:main1}, \ref{t:main2} and~\ref{t:part} are related by
\begin{equation}
\label{e:diff}
 f^d(c) = - \frac{d}{d c} g^d(c) 
 \end{equation}
at all points $c$ such that $g^d$ is differentiable. Since $g^d$ is convex it is differentiable except perhaps on a countable set $\mathcal{C}^d$ of discontinuities that are excluded from our results in Theorem~\ref{t:main1}. Since $g^1$ is smooth, we need no such restriction in one dimension.
\end{remark}

 \begin{remark}
 Our analysis of the critical regime $\beta \sim c/N$ in one dimension shows that the ($\log$-)asymptotics of the quantity
 \[ E^N_\beta = \mathbb{E}\left[    \Pi_{x \in \mathcal{T}^N_\beta} (X_x^2 + Y_x^2) \right]   \]
 are carried by the event in which $Y/X$ is roughly constant (see the proof of Proposition \ref{p:ode}), and indeed we are able to show that, as $N \to \infty$, 
 \[ \frac{1}{N} \log E^N_\beta = \frac{1}{N} \log  \mathbb{E}\left[    \Pi_{x \in \mathcal{T}^N_\beta} X_x^2 \right]  + o(1)  .\]
 This latter expression is what leads us to the representation for $g^1$ in \eqref{e:g1} in terms of a \textit{one-dimensional} variational problem, rather than a two-dimensional variational problem as one might expect. 
 \end{remark}
 
 \begin{remark}
 \label{r:dmp}
 The representation for $g^1$ in terms of the solution to the ODE in \eqref{e:ode} is ultimately due to the Markov property of $\Psi$ in one dimension (see the analysis in Section \ref{s:ld}). Since, in fact, $\Psi$ satisfies a certain \textit{(pseudo-)domain Markov property} in all odd dimensions (see, e.g., \cite[Theorem A.3]{adler}), it is possible that a variant of our analysis might extend to these cases as well. 
\end{remark}

\begin{remark}
 A comparison with the results of \cite{M16a} (see, e.g., Proposition~\ref{p:vf}) would allow us to deduce that $g^1$ is actually \textit{strictly} convex. Since we are unable to prove this property directly from our approach, we prefer to omit it from the statement of our results.
 \end{remark}

\begin{figure}[ht]
\begin{tikzpicture}[scale=1.1]	
\draw[->, thick] (0,3)--(3,3) node[right]{$c$};
\draw[->, thick] (0,3)--(0,1) node[below]{$g^1(c)$};
 \draw[scale=1.1,domain=0:2.8,smooth,variable=\x] plot ({\x},{2/(1+\x))+0.7});
\end{tikzpicture}
\caption{A rough illustration of the function $g^1$ in Theorem \ref{t:part}.}
\label{fig:g}
\end{figure}
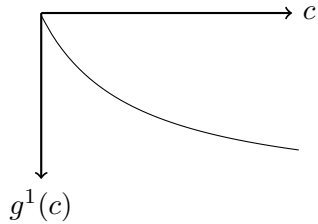
 
Given Theorem \ref{t:part}, to complete the proofs of Theorem \ref{t:main1} and \ref{t:main2} it remains to extract bounds for $D^N_\beta =  - \frac{\partial}{\partial \beta} (  \frac{1}{N^d} \log Z^N_\beta )$. This is possible via the following convexity statement:

\begin{lemma}
\label{l:convex}
For each $N \ge 2$, the function
\[ \beta \mapsto   \frac{1}{N^d} \log Z^N_\beta \]
is strictly decreasing and convex on $(0, \infty)$. 
\end{lemma}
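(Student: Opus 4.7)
The plan is to prove both assertions by computing the first two $\beta$-derivatives of $\log Z^N_\beta$ and recognising them as the standard cumulant-style expressions one gets for any Boltzmann partition function. The result is essentially the textbook fact that, for a finite-state Gibbs family with non-negative energy, the log-partition function is a decreasing, convex function of inverse temperature.

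Concretely, first I would differentiate the explicit sum
\[ Z^N_\beta = \sum_{\pi \in \pi^N} e^{-\beta H^N(\pi)} \]
termwise (which is trivially justified since the sum is finite) to get
\[ \frac{\partial}{\partial \beta} \log Z^N_\beta = - \frac{1}{Z^N_\beta} \sum_{\pi \in \pi^N} H^N(\pi) \, e^{-\beta H^N(\pi)} = -E^N_\beta[ H^N(\pi) ] = -N^d D^N_\beta , \]
which is the identity already derived in \eqref{e:s1}. Since $H^N(\pi) \ge 0$ in general and $H^N(\pi) > 0$ for at least one $\pi$ as soon as $|\mathcal T^N| \ge 2$ (take any transposition of two distinct sites), and since every permutation has strictly positive Boltzmann weight, this expectation is strictly positive. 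Hence $\partial_\beta \log Z^N_\beta < 0$, giving the strict decrease claim.

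For convexity I would differentiate once more. Using the quotient rule on $E^N_\beta[H^N]$, the two terms assemble into
\[ \frac{\partial^2}{\partial \beta^2} \log Z^N_\beta = E^N_\beta\!\left[ H^N(\pi)^2 \right] - \left( E^N_\beta\!\left[ H^N(\pi) \right] \right)^2 = \mathrm{Var}^N_\beta\!\left( H^N(\pi) \right) \ge 0, \]
so $\beta \mapsto \frac{1}{N^d} \log Z^N_\beta$ is convex on $(0,\infty)$. (If one also wanted \emph{strict} convexity, one would note that the variance is strictly positive whenever $H^N$ is not constant on $\pi^N$, which again holds as soon as $|\mathcal T^N| \ge 2$, since one can exhibit two permutations with distinct energies.)

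There is no serious obstacle; the only point requiring a word of care is that for $N = 1$ (or more generally whenever $|\mathcal T^N| = 1$) the only permutation is the identity, so $Z^N_\beta \equiv 1$ and the function is merely constant. The natural way to handle this is simply to note that the lemma is content-free in that degenerate case and to state the argument under the standing assumption $|\mathcal T^N| \ge 2$, under which both derivative computations yield the strict decrease and convexity (and in fact strict convexity) asserted.
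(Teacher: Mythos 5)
Your proof is correct and, if anything, more self-contained than the paper's. The paper argues strict decrease via the permanent representation $Z^N_\beta = \mathrm{perm}(A^N_\beta)$, claiming each matrix entry is positive and strictly decreasing in $\beta$; you instead compute $\partial_\beta \log Z^N_\beta = -E^N_\beta[H^N]$ directly and observe it is strictly negative. For convexity the paper simply cites the standard fact that Boltzmann log-partition functions are convex in the inverse temperature, which is exactly the variance identity you spell out. Your route requires nothing beyond the definition of $Z^N_\beta$ and is arguably cleaner. You also correctly flag the degenerate case $|\mathcal{T}^N| = 1$ (i.e.\ $N = 1$), where the lemma's strict-decrease claim fails because $Z^N_\beta \equiv 1$; this edge case also slips through the paper's argument (the diagonal entries of $A^N_\beta$ equal $e^{-\beta\cdot 0}=1$ and are \emph{not} strictly decreasing, so the permanent is constant when $N^d = 1$). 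Since the lemma is only ever invoked for growing $N$, the omission is harmless, but your explicit treatment of it is a genuine improvement in precision.
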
  
\begin{proof}
It is clear that $Z^N_\beta = \text{perm}(A^N_\beta)$ is strictly decreasing as a function of $\beta$ since each off-diagonal element of $A^N_\beta$ is positive and strictly decreasing in $\beta$. Moreover, since the partition function of a Boltzmann distribution is always log-convex with respect to the inverse temperature, $Z^N_\beta$ is log-convex as a function of $\beta$. 
\end{proof}

\subsection{The variational formula}
\label{s:vf}

In this section we discuss the variational formula that is used to describe the functions $g^d$ in Theorem \ref{t:part}, and hence also the function $f^d$ in Theorems~\ref{t:main1} and \ref{t:main2}. As shown in Section \ref{s:ld}, this description arises naturally out of the large deviation theory of Gaussian fields, via the connection between SRPs and Gaussian fields explained in Section~\ref{s:h}. \vspace{0cm}

Large deviation principles for Gaussian fields involve the notions of \textit{entropy} and \textit{energy} for Gaussian measures, which we recall now. For a domain $D \subset \mathbb{R}^d$, let $C(D)$ denote the space of continuous functions $f: D \to \mathbb{R}$. By the standard theory of Gaussian fields, to each stationary, almost surely continuous, centred Gaussian field $\Psi$ on $\mathbb{R}^d$ with covariance kernel~$\kappa$ one can associate a \textit{reproducing kernel Hilbert space} (RKHS) (also sometimes called the \textit{Cameron--Martin space}) $\mathcal{H} \subset C(\mathbb{R}^d)$; this is formed by completing the space of \textit{finite linear combinations} of the covariance kernel 
\[  f( \cdot ) =  \sum_{1 \le i \le n} a_i \kappa(s_i,  \cdot)   \ , \quad a_i \in \mathbb{R}, s_i \in \mathbb{R}^d , \]
 equipped with the inner product  
\begin{equation}
\label{e:rkhsip}
  \bigg\langle \sum_{1 \le i \le n} a_i \kappa(s_i, \cdot), \sum_{1 \le j \le n} a'_j \kappa(s'_j, \cdot) \, \bigg\rangle_{\mathcal{H}}  = \sum_{1 \le i, j \le n }  a_i a_j' \kappa(s_i, s_j')   ,
  \end{equation}
which satisfies in particular the \textit{reproducing property}
\begin{equation}
\label{e:repro}
    f(x) =\left\langle f(\cdot), \kappa(x, \cdot) \right\rangle_{\mathcal{H}} . 
    \end{equation}
See \cite{bertho04, jansen} for background on the RKHS of Gaussian fields.

\smallskip
For each $c > 0$ one can similarly define the RKHS $\mathcal{H}_c$ of the restricted field $\Psi|_{[0, c]^d}$ by restricting the functions in $\mathcal{H}$ to the domain $[0, c]^d$. The norms in $\mathcal{H}$ and $\mathcal{H}_c$ are related by a restriction property; namely for $\bar f \in \mathcal{H}_c$,
\begin{equation}
\label{e:restrict}
  \| \bar f \|_{\mathcal{H}_c} = \inf_{f \in \mathcal{H} : f|_{[0, c]^d} = \bar f} \| f \|_\mathcal{H}  
  \end{equation}
with the infimum attained by some $f \in \mathcal{H}$.

\smallskip
Recall that $\Psi$ denotes the stationary, almost surely continuous, centred Gaussian field on~$\mathbb{R}^d$ with covariance
\[ \kappa(s, t) =  \mathbb{E}[ \Psi(s) \Psi(t) ] = e^{-|s-t| } . \]
Henceforth, let $\mathcal{H}$ denote the RKHS of the field $\Psi$, and for each $c > 0$, let $\mathcal{H}_c$ denote the RKHS of $\Psi|_{[0, c]^d}$; the respective norms in these spaces are denoted $\| \cdot \|_\mathcal{H}$ and $\| \cdot \|_{\mathcal{H}_c}$.

\smallskip
In the case $d = 1$, the Gaussian process $\Psi$ is the \textit{Ornstein-Uhlenbeck process}, and~$\mathcal{H}$ is the classical \textit{Cameron-Martin space} consisting of functions in $L^2(\mathbb{R})$ that are absolutely continuous and whose (weak) derivative is also in $L^2(\mathbb{R})$; the norm in $\mathcal{H}$ is given by
\begin{align}
\label{e:oun}
 \|f\|^2_\mathcal{H} &=   \frac{1}{2} (\| f \|_2^2 + \|f'\|_2^2 ) = \frac{1}{2}  \int_{s \in \mathbb{R}}  f(s)^2 + f'(s)^2    \, ds   =  \frac{1}{2} \int_{s \in \mathbb{R}}  \left( f(s) + f'(s) \right)^2   \, ds ,
 \end{align}
 where the latter equality is via integration by parts ($f, f' \in L^2$ implies that $f$ decays at infinity); see also Section~\ref{s:ld} where we show how \eqref{e:oun} may be derived. Similarly, the norm in~$\mathcal{H}_c$ is given by
\begin{align}
\label{e:oun2}
 \|f\|^2_{\mathcal{H}_c} & =  \frac{1}{2} \left( f(0)^2 + f(c)^2 \right) +  \frac{1}{2}  \int_{s \in [0,c]}  f(s)^2 + f'(s)^2    \, ds \\
 \nonumber &   =  f(0)^2 + \frac{1}{2} \int_{s \in [0,c]}  \left( f(s) + f'(s) \right)^2   \, ds .
 \end{align}
 Explicit descriptions of $\|\cdot\|_\mathcal{H}$ and $\|\cdot\|_{\mathcal{H}_c}$ in higher dimensions are not as simple, although, as we explain in Section \ref{s:ld}, in odd dimensions they can also be represented as integrals over the function $f$ and its derivatives; this `local' expression for $\|\cdot\|_\mathcal{H}$ is related to the (pseudo-)domain Markov property enjoyed by $\Psi$ in odd dimensions (see Remark \ref{r:dmp} above).

\smallskip
Define the \textit{entropy}, or \textit{large deviation rate}, of a function $f \in C(\mathbb{R}^d)$ to be
\[   I[f]  := \begin{cases}
\frac{1}{2} \|f\|^2_\mathcal{H} , & \text{if } f \in \mathcal{H} , \\
\infty, & \text{else}.
\end{cases}   \]
For each $c > 0$ and each pair of functions $f_1, f_2 \in C(\mathbb{R}^d)$ define the \textit{energy} of the pair to be 
\[  J_c[f_1, f_2] :=  c^{-d} \int_{s \in [0, c]^d} \log (   f_1(s)^2 + f_2(s)^2    )  \, ds .   \]
The variational problem in Gaussian space that defines $g^d$ arises out of a balance between energy and entropy:
\begin{equation}
\label{}
 (VP\!:\!G) \qquad \text{maximise}  \quad J_c[f_1, f_2]    -  I[f_1]  - I[f_2] \quad \text{over} \quad f_1, f_2 \in C(\mathbb{R}^d) . 
 \end{equation}
 
\smallskip
We analyse the variational problem $(VP\!:\!G)$ in Section \ref{s:ld}, where we prove in particular the following properties:

\begin{proposition}
\label{p:vprop}
For each $d \ge 1 $ and $c > 0$, the variational problem $(VP\!:\!G)$ has a finite solution $V_{G}^{d} (c)$ that is attained for a pair $f_1, f_2 \in C(\mathbb{R}^d)$. The function $c \mapsto V_G^d(c)$ is continuous, strictly decreasing, convex, and satisfies 
\[  \lim_{c \to 0} V_{G}^{d}(c) = \log 2 - 1  \quad  \text{and} \quad   V_G^d(c) = - d \log c + O(1)  \ \quad \text{as } c \to \infty.   \]
Moreover, the function $c \mapsto V_G^1(c)$ is smooth, satisfies
 \[   V_G^1(c)   = \log 2 - 1  -c/3 + O(c^2) \quad \text{as } c \to 0 ,\]
 and has the representation
\[  V_G^1(c) = \log 2   - h_c(0)^2 +  \int_{s \in [0, c]} \frac{2}{c}  \log h_c(s) - \frac{1}{2} \left( h_c(s) + \dot{h}_c(s)  \right)^2  \, ds  \]
where $h_c$ is the positive analytic function on $[0, c]$ that is the unique solution to the ODE in~\eqref{e:ode}.
\end{proposition}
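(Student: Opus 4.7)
Since only the restrictions $f_i|_{[0,c]^d}$ appear in $J_c$, the restriction property \eqref{e:restrict} reduces $(VP\!:\!G)$ to an equivalent supremum over $\bar f_1, \bar f_2 \in \mathcal{H}_c$ with entropy $\tfrac{1}{2}\|\bar f_1\|_{\mathcal{H}_c}^2+\tfrac{1}{2}\|\bar f_2\|_{\mathcal{H}_c}^2$; any maximizer on $\mathcal{H}_c$ then lifts to $\mathbb{R}^d$ via the norm-minimizing extension of \eqref{e:restrict}. The reproducing property \eqref{e:repro} with Cauchy--Schwarz gives $\|\bar f\|_\infty \le \|\bar f\|_{\mathcal{H}_c}$, so $J_c[f_1,f_2] \le \log(\|\bar f_1\|_{\mathcal{H}_c}^2+\|\bar f_2\|_{\mathcal{H}_c}^2)$, which is dominated by the quadratic entropy; hence $V_G^d(c)<\infty$. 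For attainment I take a maximizing sequence; the entropy bound confines it to a ball in $\mathcal{H}_c\times\mathcal{H}_c$, which is relatively compact in $C([0,c]^d)$ by Arzel\`a--Ascoli (the uniform modulus of continuity being controlled through the continuous kernel $\kappa$). Along a uniformly convergent subsequence the RKHS norm is lower semicontinuous, while $J_c$ is upper semicontinuous (the $\log^+$ part by reverse Fatou; the $\log^-$ part is controlled because a maximizing sequence cannot have $f_1^2+f_2^2\to 0$ on a set of positive measure without driving the functional to $-\infty$). The limit pair attains the supremum.

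\textbf{Qualitative properties and limits.} Continuity of $c\mapsto V_G^d(c)$ follows from the same compactness argument applied jointly in $c$ and the pair. For strict monotonicity and convexity I appeal to the identification of $V_G^d$ with the function $g^d$ of Theorem~\ref{t:kms} (up to the additive constant $\log 2 - 1$) that emerges from the large deviation analysis of Section~\ref{s:ld}; these properties for $g^d$ are a consequence of Lemma~\ref{l:convex}. For $c\to 0^+$ I obtain a lower bound by substituting $f_i \equiv a_i$ on $[0,c]^d$ (with norm-minimizing $\mathcal{H}$-extension) and optimizing $a_1^2+a_2^2$, which yields $\log 2 - 1 + O(c)$; the matching upper bound comes from Jensen's inequality $J_c[f_1,f_2]\le\log(c^{-d}\int(f_1^2+f_2^2))$ combined with the pointwise bound $\int f^2 \le c^d\|f\|_\infty^2 \le c^d \|f\|_{\mathcal{H}_c}^2$ and optimization of the resulting scalar problem over $S=\|f_1\|_{\mathcal{H}_c}^2+\|f_2\|_{\mathcal{H}_c}^2$ (maximized at $S=2$). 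For $c\to\infty$ the same Jensen step together with the RKHS inclusion $\int_{[0,c]^d} f^2 = O(\|f\|_{\mathcal{H}_c}^2)$ gives the upper bound $-d\log c + O(1)$, and a matching lower bound is supplied by test functions of $\mathcal{H}_c$-norm $O(1)$ and pointwise size $\asymp c^{-d/2}$ on $[0,c]^d$.

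\textbf{The one-dimensional case.} Writing $f_1 = r\cos\theta$, $f_2 = r\sin\theta$ with $r\ge 0$ and using \eqref{e:oun2} one finds
\[ I[f_1]+I[f_2] = \tfrac{1}{2}r(0)^2 + \tfrac{1}{4}\int_0^c \bigl[(r+r')^2 + r^2(\theta')^2\bigr]\, ds, \qquad J_c[f_1,f_2] = \tfrac{2}{c}\int_0^c \log r\, ds. \]
Since $\theta$ enters only through $r^2(\theta')^2\ge 0$, every maximizer has $\theta$ constant, reducing $(VP\!:\!G)$ to a one-function variational problem in $r>0$ that is strictly concave (strict concavity of $\log r$, strict convexity of the quadratic entropy), so its critical point is unique and global. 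Its Euler--Lagrange equation is $r'' = r - 4/(cr)$ with the free-endpoint conditions $r'(0)=r(0)$ and $r'(c)=-r(c)$, which becomes exactly \eqref{e:ode} after the rescaling $r=\sqrt{2}\,h_c$; standard ODE theory gives existence and analyticity of $h_c$, and substitution back reproduces the claimed explicit formula for $V_G^1(c)$. Smoothness of $c\mapsto V_G^1(c)$ follows from smooth dependence of the ODE solution on the parameter combined with the envelope theorem applied to the variational characterization. The small-$c$ expansion $V_G^1(c)=\log 2 - 1 - c/3 + O(c^2)$ is obtained by perturbing $h_c$ around the $c\to 0$ limit and reading off the leading correction from the explicit formula (equivalently, via the relation $-\tfrac{d}{dc}V_G^1(c)=f(c)$ and $\lim_{c\to 0}f(c)=1/3$ noted in Remark~\ref{r:limit}).

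\textbf{Main obstacle.} The principal technical difficulty is the rigorous attainment step: the integrand $\log(f_1^2+f_2^2)$ is unbounded below, so one must rule out degenerate maximizing sequences along which $f_1^2+f_2^2$ vanishes on a set of positive measure. A secondary difficulty in the one-dimensional case is that $\log(x^2+y^2)$ is \emph{convex}, not concave, as a function of $(x,y)$, so global optimality of the ODE solution cannot be read off in the Cartesian variables; the polar parametrization is essential precisely because it exposes the strict concavity of the reduced functional in the single amplitude variable $r$, which is what ultimately permits the characterization through the single ODE for $h_c$.
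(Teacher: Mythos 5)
Your route broadly tracks the paper's: polar reduction, Euler--Lagrange, the ODE for $h_c$, and inheritance of convexity from the prelimit free energy (Lemma~\ref{l:convex}) via Proposition~\ref{p:crit}. Two genuine departures are worth noting. For \emph{attainment}, you give a direct variational-compactness argument (Arzel\`a--Ascoli on a ball in $\mathcal{H}_c$, lower semicontinuity of the RKHS norm, upper semicontinuity of $J_c$), whereas the paper gets attainment for free from Theorem~\ref{t:vl} (Varadhan's lemma with lower semicontinuous $F$); the paper's path neatly sidesteps exactly the $\log^-$ degeneracy you flag as the main obstacle, and you would need a quantitative estimate (e.g.\ a lower bound on $c^{-d}\int\log(f_1^2+f_2^2)$ in terms of $\|f_i\|_{\mathcal{H}_c}$ using the modulus of continuity from the kernel) to close your version. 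For \emph{uniqueness} of the ODE solution, you use strict concavity of the reduced functional in $r$; the paper instead uses a shooting/monotonicity argument ($Z(r(0))$ strictly increasing). Both are sound; yours is arguably cleaner and directly gives global optimality as well.

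There is, however, a real gap in your argument for \emph{strict} monotonicity. You attribute it to Lemma~\ref{l:convex} through the identification with the prelimit free energies, but a pointwise limit of strictly decreasing functions is only non-increasing. The paper explicitly separates these: Lemma~\ref{l:prop2} gives non-increasing and convex, and strict decrease then follows (Corollary~\ref{c:propvghigh}) from the combination of convexity with $V_G^d(c)\to-\infty$ as $c\to\infty$. You need that extra step. Two smaller points: (i) your claim that $\log(x^2+y^2)$ is \emph{convex} in $(x,y)$ is incorrect --- as $2\log|z|$ it is harmonic away from the origin, so its Hessian is traceless and indefinite; the substantive point (that in Cartesian variables the functional is not concave and the polar form exposes concavity in $r$) is correct, but the justification is off. (ii) ``Smooth dependence of the ODE solution on the parameter'' is not automatic for a two-point BVP on a $c$-dependent interval; the paper's Proposition~\ref{p:smooth} works through an implicit-function argument on the conserved quantity and the turning points $(a_1,a_2)=(h_c(0),h_c(c/2))$, and some version of that non-degeneracy check is needed rather than an unqualified appeal to the envelope theorem.
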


Recall the Gaussian moment $E^N_\beta$ defined in \eqref{e:e}. In Section \ref{s:ld} we use the classical theory of large deviations for Gaussian measures to prove that, if $\beta \sim c/N$,
\[ \frac{1}{N^d} \log E^N_\beta =    d \log N + V_G^d(c)   + o(1) .\]
Along with Proposition \ref{p:id} and Stirling's formula
\[     \frac{1}{N^d} \log (  (N^d)! )  = d \log N - 1  + o(1) , \]
 this implies that
\begin{equation}
\label{e:permlimit}
   \frac{1}{N^d} \log Z^N_\beta =     \frac{1}{N^d} \text{perm}(A^N_\beta) =  \frac{1}{N^d} \log (  (N^d)! )  + V_G^d(c) - \log 2   + 1 +o(1) ,
 \end{equation}
which completes the proof of Theorem \ref{t:part} in the critical regime for $g^d$ defined as
\[  g^d(c) = V_G^d(c) - \log 2 + 1 . \]
Moreover, in Section \ref{s:final} we show that \eqref{e:permlimit} implies also that, as $N \to \infty$,
 \[  D^N_\beta =   N \times - \frac{d}{dc} V_G^1(c) \times ( 1 + o(1) ) ,\]
 which also completes the proof of Theorem \ref{t:main2} in the critical regime.
  
\smallskip
In light of the the results in \cite{M16a} discussed in Section \ref{s:lr} above, an immediate corollary of Theorem~\ref{t:main2} is the equivalence between the solutions of the two ostensibly distinct variational formulae $(VP\!:\!P)$ and $(VP\!:\!G)$.

\begin{proposition}[Equivalence of variational formulae]
\label{p:vf}
For each $c > 0$,
\[ V_P(c) =  V_G^1(c) - \log 2  + 1.\]
\end{proposition}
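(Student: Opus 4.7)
My plan is to identify both $V_P(c)$ and $V_G^1(c)-\log 2 + 1$ as two different expressions for the same limiting free energy
\[ L(c) := \lim_{N \to \infty} \Bigl( \tfrac{1}{N} \log Z^N_{c/N} - \tfrac{1}{N} \log (N!) \Bigr), \]
after which the claimed equality follows by uniqueness of the limit. The Gaussian side of this identification is already in hand: by Proposition \ref{p:id} and Lemma \ref{l:reed}, $Z^N_\beta$ is proportional to the Gaussian moment $E^N_\beta$, and the critical-regime asymptotics of $E^N_\beta$ coming from the large deviation analysis leading to Proposition \ref{p:vprop}, combined with Stirling's formula, give $L(c) = V_G^1(c) - \log 2 + 1$; this is essentially already recorded in \eqref{e:permlimit}.

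For the permuton side, I would appeal to the Sanov-type LDP established in \cite{M16a} for the empirical permuton of the uniform random permutation, together with Varadhan's lemma applied to the bounded continuous functional $\lambda \mapsto -c \int_S |x-y|\,d\lambda$, which after reconciling conventions in the formulation of $(VP\!:\!P)$ yields $L(c) = V_P(c)$. Equating the two expressions for $L(c)$ then proves the claim. An alternative route, which bypasses any direct manipulation of the permuton variational problem, is to observe that by the results of \cite{M16a} recalled in Section \ref{s:lr}, $V_P$ is an antiderivative of $f$, while by Theorem \ref{t:main2} combined with Theorem \ref{t:kms} so is $-g^1 = -(V_G^1 - \log 2 + 1)$; after accounting for the sign convention these agree up to an additive constant, which one pins down by matching values as $c \to 0^+$, at which point both sides vanish (since at $c=0$ the model reduces to the uniform random permutation and $Z^N_0/N!=1$).

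The main subtlety is the reconciliation of normalisation conventions between the two variational problems, since $(VP\!:\!P)$ is written over the space $\mathcal{M}$ of copulas with a $1/c$ factor multiplying the energy term and a Kullback-Leibler divergence, whereas $(VP\!:\!G)$ lives in the Cameron-Martin space of the Ornstein-Uhlenbeck process and features no such rescaling. A genuinely direct proof, in which the extremising copula of $(VP\!:\!P)$ is translated into the extremising pair of functions for $(VP\!:\!G)$, looks considerably more delicate (it would essentially require identifying the density of the limiting empirical permuton with the modulus squared of the Gaussian extremiser via Reed's formula); fortunately this is unnecessary for the corollary.
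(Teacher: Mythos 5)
Your proposal is correct, and it fills in the reasoning that the paper merely asserts: the paper gives no formal proof of Proposition~\ref{p:vf}, declaring it ``an immediate corollary of Theorem~\ref{t:main2}'' together with the results of \cite{M16a} and explicitly disclaiming any direct proof. Your ``alternative'' route --- matching the two expressions for the scaled mean displacement $\lim_N D^N_\beta/N$ as derivatives of $V_P$ and of $g^1=V_G^1-\log 2+1$, then fixing the constant from the trivial $c\to0^+$ limit --- is exactly the paper's intended argument. Your primary route is a little different and in some ways cleaner: it matches the normalised free energy $L(c)=\lim_N\bigl(\tfrac1N\log Z^N_{c/N}-\tfrac1N\log N!\bigr)$ directly, using \eqref{e:permlimit} on the Gaussian side and the permuton LDP plus Varadhan on the other, which bypasses antidifferentiation and the constant; the cost is that it requires \cite{M16a} to give convergence of the free energy itself, not merely of the mean displacement, a slightly stronger statement than the paper's summary of \cite{M16a} explicitly provides. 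The one genuine gap in your write-up is the sign accounting, which you flag but leave unresolved: as displayed, $(VP\!:\!P)$ carries a factor $+\tfrac1c$ on the energy term while your Varadhan computation produces $-c$, and the paper's stated relation $f=\tfrac{d}{dc}V_P$ is incompatible with $f=-\tfrac{d}{dc}g^1$ (Remark~\ref{r:limit2}) and $V_P=g^1$ holding simultaneously. These tensions trace to the paper's own exposition (evidently $V_P$ is meant to be precisely the limiting free energy $L(c)$, and the displayed $(VP\!:\!P)$ and the relation $f=\tfrac{d}{dc}V_P$ contain sign slips), so they are not of your making; nevertheless your alternative route, taken literally with the paper's $f=\tfrac{d}{dc}V_P$, would output $V_P=-g^1$ rather than the claimed $V_P=g^1$, so ``accounting for the sign convention'' needs to be carried out concretely rather than waved at, and your primary route is the safer one.
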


We do not have a direct proof of the equality in Proposition \ref{p:vf}, which in isolation appears quite mysterious. As we mention in Section \ref{s:lr} above, the approach of \cite{M16a} can likely be extended to higher dimensions, which would give an analogous relationship between $V_G^d$ and the natural higher dimensional analogues of $V_P$.

\subsection{Open questions and discussion}

Our results and methods raise many natural questions, which we discuss briefly now:
\begin{enumerate}
\item Can our results be extended to general potential functions $V(x)$? As discussed in Section \ref{s:h}, the connection to Gaussian fields requires the matrix~$A^N_\beta = (e^{-\beta V(x-y)})_{x,y \in \mathcal{T}^N}$ to be positive-definite, but this is still true for many other natural potentials, in particular for the potential $V(x) = |x|^2$ that relates to the ideal Bose gas. One complication is that, in assessing the band-structure of these generalised models, there is an additional natural parameter to include: the scale of the lattice. In other words, one should consider the generalised model
\[   P(\pi) \propto e^{-\beta \sum_{x \mathcal{T}^N}V( (x - \pi(x)) / L  ) } , \]
for which the band-width will depend on all of the parameters $\beta, N$ and $L$. Only in the special case of potential $V(x) = |x|$ are the parameters $\beta$ and $L$ directly related via linear rescaling.
\item Can we say anything about the model in which the potential function $V(x) = |x|$ is replaced by a periodic version, i.e., $V(x) = \| x \|_N$, where $\| \cdot \|_N$ denotes the Euclidean distance on the lattice-box $\mathcal{T}^N$ with periodic boundary conditions? In this model one loses the connection to Gaussian fields since the analogue of $A^N_\beta$ is no longer positive-definite. However, various other techniques become applicable, for instance one can apply the \textit{van der Waerden inequality} to give an immediate lower bound on $Z^N_\beta = \text{perm}(A^N_\beta)$ which is fairly sharp.
\item A important statistical feature of random permutations is their cycle structure, especially the lengths of the longest cycles. For our model, a question of major interest is to determine the scale $\beta$, depending on $N$, on which macroscopic cycles emerge as $N \to \infty$, i.e.\ such that there are cycles of length comparable to $N$ with non-negligible probability. Based on considerations of universality, the critical band-width at which macroscopic cycles emerge should depend on the dimension as
\[ \beta = \begin{cases}
\Theta(\sqrt{N}), & d = 1, \\
\Theta(\sqrt{\log N}), & d = 2, \\
\Theta(1), & d \ge 3 .
\end{cases} \] 
This is known in the model of the ideal Bose gas in \eqref{e:ann} \cite{BU09, EP17}, and has recently been confirmed for $d =1 $ also in the Mallows model \cite{GP16}. It is thought that these scales are also related to the transition between delocalised/localised eigenvectors in random band matrices when the band-width attains a critical size, originally conjectured in \cite{FM91} (see also the discussion in \cite{GP16}). In light of our results in Theorems \ref{t:main1} and \ref{t:main2}, this leads us to the following conjecture.

\begin{conjecture}[Emergence of macroscopic cycles]
\label{c:emerge}
Let
\[ C_\beta^N = \frac{1}{N^d} \sum_{x \in \mathcal{T}^N} E[\text{length of the cycle containing } x] \]
denote the mean cycle length per site. Then there exists a $\beta_c > 0$, depending on $d$, such that as $N \to \infty$,
\[ C_\beta^N =  \begin{cases} 
  \min\{ \Theta( 1/\beta^2) , \Theta(N) \},  & d =1, \\
    \min\{ e^ { \Theta(1/\beta^2) }, \Theta(N) \}, & d = 2 , \\ 
       \Theta( N )  ,  & d \ge 3 , \, \beta < \beta_c, \\
         \Theta(1) ,  & d \ge 3 , \, \beta > \beta_c .
  \end{cases} \]
  \end{conjecture}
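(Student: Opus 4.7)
The natural framework is to couple the Gaussian field representation of Proposition \ref{p:id} with a random-walk heuristic for cycle closure. The plan has three stages: first, reduce cycle-length statistics to ratios of constrained partition functions; second, use Reed's formula to rewrite these ratios as conditional Gaussian moments, and argue that they factorise so that enumerating cycles reduces to enumerating closed walks of a concrete $\beta$-dependent random walk on $\mathbb{Z}^d$; third, apply standard random-walk estimates to obtain the conjectured scales.

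For the first stage, for any $x_0 \in \mathcal{T}^N$ one has the identity
\[ P_\beta^N\bigl( x_0 \text{ lies in a cycle of length } k \bigr) = \frac{1}{Z_\beta^N} \sum_{\substack{x_1, \ldots, x_{k-1} \in \mathcal{T}^N \\ \text{distinct}, \, \neq x_0}} \Bigl( \prod_{i=0}^{k-1} e^{-\beta|x_i - x_{i+1}|} \Bigr) Z_\beta^N\bigl( \mathcal{T}^N \setminus \{x_0, \ldots, x_{k-1}\} \bigr), \]
where $x_k := x_0$ and $Z_\beta^N(S)$ denotes the partition function over bijections of $S$. By Reed's formula (Lemma \ref{l:reed}) applied to the principal submatrix of $A_\beta^N$ indexed by $\mathcal{T}^N \setminus \{x_0, \ldots, x_{k-1}\}$, the ratio $Z_\beta^N(\mathcal{T}^N \setminus \{x_0, \ldots, x_{k-1}\})/Z_\beta^N$ equals a conditional Gaussian moment of $\prod_x (X_x^2 + Y_x^2)$ with the factors at $x_0, \ldots, x_{k-1}$ removed. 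The central reduction to establish is that this ratio equals approximately $\exp(-k \cdot N^{-d} \log Z_\beta^N)$, uniformly in the geometry of $\{x_1, \ldots, x_{k-1}\}$, so that cycle enumeration reduces to the closed-walk partition function of the random walk $(S_i)$ on $\mathbb{Z}^d$ with step distribution $p_\beta(y) \propto e^{-\beta|y|}$.

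Granted this reduction, the conjectured scales follow from the behaviour of $(S_i)$, whose step covariance is of order $\beta^{-2}$. In $d=1$, a self-avoiding closed walk of length $k$ must fit $k$ distinct integers inside its diameter, which for a bridge scales like $\sqrt{k}/\beta$; the balance $k \asymp \sqrt{k}/\beta$ gives the maximal cycle length $\Theta(1/\beta^2)$. In $d=2$ the walk is recurrent with return probability of order $\beta^2/k$ at time $k$, so the expected number of returns grows as $\beta^2 \log k$; demanding that this be of order one yields the scale $k \sim \exp(\Theta(1/\beta^2))$. In $d \ge 3$ the walk is transient with Green's function of order $\beta^{d-2}$, and macroscopic cycles should emerge precisely when the particle density overwhelms the finite Green's function sum, yielding a sharp transition at some $\beta_c > 0$, in direct analogy with Bose--Einstein condensation. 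Matching upper bounds on $C_\beta^N$ would come from Markov's inequality applied to the expected number of cycles of length $\ge k$ via the identity above, while matching lower bounds would require a second-moment argument on the same quantity.

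The decisive difficulty is the reduction in the second stage: controlling how the Gaussian moment $E_\beta^N$ depends on excising a subset of lattice sites, uniformly over the geometry of the subset. In $d=1$, the Markov property of the Ornstein--Uhlenbeck field should split the depleted moment exactly into a product of sub-interval contributions, each analysable via the ODE representation of Theorem \ref{t:kms}; this makes the $d=1$ case the most tractable. In higher dimensions the Gaussian field has no clean Markov decomposition (even the pseudo-domain Markov property available in odd dimensions, noted in Remark \ref{r:limit2}, is too weak for removing arbitrary subsets of sites), so the most promising route seems to be to import integrable techniques from the ideal Bose gas literature \cite{BU09, EP17}, where analogous cycle-length distributions are exactly computable in the annealed model. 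Establishing the existence and nontriviality of $\beta_c$ in $d \ge 3$ appears to require genuinely new tools beyond those developed in this paper.
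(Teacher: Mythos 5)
This statement is labelled a \emph{conjecture} in the paper: the authors offer no proof, only a universality heuristic referencing the ideal Bose gas and the Mallows model. So the question is not whether you match the paper's argument (there is none) but whether your proposal actually closes the gap. It does not. Your first-stage identity for $P_\beta^N(x_0 \text{ lies in a cycle of length } k)$ is correct, and rewriting the ratio of depleted to full partition functions via Reed's formula is a legitimate use of the paper's machinery. But the entire weight of the argument rests on the ``central reduction'' that
\[
Z_\beta^N\bigl(\mathcal{T}^N\setminus\{x_0,\ldots,x_{k-1}\}\bigr)/Z_\beta^N \approx \exp\bigl(-k\cdot N^{-d}\log Z_\beta^N\bigr)
\]
\emph{uniformly in the geometry} of the excised set, and this is asserted rather than proved. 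This factorisation is precisely the ``ideal gas of cycles'' structure that makes the annealed Bose-gas model integrable; the paper explicitly notes that no such integrable structure is known for the quenched lattice model studied here. Worse, the cycle weight $\prod_i e^{-\beta|x_i-x_{i+1}|}$ concentrates the sum over $(x_1,\ldots,x_{k-1})$ on \emph{clustered} configurations of diameter $O(\sqrt{k}/\beta)$, i.e.\ highly atypical excised sets, for which the correction to the depleted free energy involves boundary terms that are not obviously negligible at the exponential precision needed to extract $\Theta$-asymptotics of $C_\beta^N$. You correctly identify this as the decisive difficulty, but identifying a difficulty is not the same as overcoming it.

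Even granting the reduction, the third stage only computes (heuristically) the \emph{expected} number of $k$-cycles; upgrading first-moment estimates to two-sided $\Theta$-bounds on the longest cycle requires the second-moment argument you mention but do not carry out, and in $d\ge 3$ the existence of a sharp, nontrivial $\beta_c$ does not follow from transience of the auxiliary walk alone --- it requires controlling the interaction between cycles that the factorisation assumption suppresses. Your own closing admission that this ``appears to require genuinely new tools beyond those developed in this paper'' is accurate: what you have written is a plausible research programme (and the $d=1$ route via the Ornstein--Uhlenbeck Markov splitting is a genuinely promising one, consistent with the paper's Section~\ref{s:ou}), but it is not a proof of the conjecture.
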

 \end{enumerate}

\subsection{Overview of the remainder of the paper}

The rest of the paper is organised as follows. In Section \ref{s:ou} we study the subcritical regime $\beta \gg 1/N$ of the one-dimensional model, and in particular show how to exploit the Markov property of the Ornstein-Uhlenbeck process to derive the rate-of-convergence in Theorem \ref{t:main2}. In Section \ref{s:ld} we study the critical regime $\beta \sim c/N$ in all dimensions by applying large deviation theory for Gaussian fields. Here we also study the variational problem $(VP\!:\!G)$ that was introduced in Section \ref{s:vf} above. In Section \ref{s:subhigh} we use general Gaussian estimates to study the subcritical regime of the model in all dimensions $d \ge 1$. Finally, in Section \ref{s:final} we combine the analysis to complete the proofs of Theorems \ref{t:main1}, \ref{t:main2} and \ref{t:part}.


\medskip

\section{The model in one dimension: Kernel expansion}
\label{s:ou}

In this section we fix $d=1$ and give an estimate on the quantity
\[ E^N_\beta  =   \mathbb{E}\left[    \Pi_{x \in \mathcal{T}^N_\beta} (X_x^2 + Y_x^2)   \right]  = 2^{N} Z^N_\beta   , \]
where $\mathcal{T}^N_\beta = \beta [\![0, N-1]\!] \subset \beta \mathbb{Z}  $ and  $X_x$ and $Y_x$ are independent Ornstein-Uhlenbeck processes (see Section \ref{s:h}). Applied to the subcritical regime $\beta \gg 1/N$, this will allow us to prove the following:

\begin{proposition}
\label{p:noncrit}
Let $1/N \ll \beta \ll 1$. Then, as $N \to \infty$,
\[ \frac{1}{N} \log E^N_\beta =   \log( 4/\beta ) - 1 + O(\beta) + O(1/(\beta N)) . \]
\end{proposition}

The key feature of the one-dimensional model is the fact that the Ornstein-Uhlenbeck process is Markovian. We exploit this fact by developing a kernel expansion for $E^N_\beta$ that is valid for all $\beta > 0$; this analysis is somewhat reminiscent of methods used in \cite{FM91, FM94, LMOS83, Kac66}.

\subsection{The kernel expansion}
For each $t > 0$, let $e^{t \Delta}$ denote the \textit{heat semi-group}, i.e.\ the operator on $f \in L^2(\mathbb{R}^2 ) $ that acts via 
\[    (e^{t \Delta} f )(s_1) = \int_{s_2 \in \mathbb{R}^2  }   p_t(s_1, s_2)  f(s_2) \,   ds_2,\]
where 
\[ p_t(x, y) :=   \frac{1}{4 \pi t} e^{-|x-y|^2/(4 t)  }  \]
is the \textit{heat kernel} (i.e.\ the Gaussian kernel with variance $\sigma^2 = 2t$).

\begin{proposition}[Kernel expansion]
\label{p:int}
For $N \ge 2$ and $\beta > 0$ the quantity $E^N_\beta$ has the kernel expansion
\[ E^N_\beta =   2^{N} \times \frac{e}{\pi} \times  \frac{e^{-\beta}(1+ e^{-\beta})}{1-e^{-\beta}} \times  \left( \frac{e^\beta (1+ e^{-\beta} )}{e(1-e^{-\beta})} \right)^{N} \times \int_{s \in \mathbb{R}^2 } v(s) ( \mathcal{K}^{N-1} v)(s)  \, ds, \]
where 
\[  v(s) := |s| \exp \left\{ - \frac{1}{2} \cdot \frac{1 + e^{-\beta}}{1-e^{-\beta}} |s|^2 \right\}  \]
 and the operator
\[  \mathcal{K} := f  e^{t \Delta}  f \]
on $L^2(\mathbb{R}^2)$ is a weighted heat semi-group with weight function and `time' given respectively by 
\[ f(s) := |s| e^ {  \frac{1}{2}  ( 1 -  |s|^2 ) }  \quad \text{and} \quad t :=  \frac{(1-e^{-\beta})^2}{4 e^{-\beta}} . \]
\end{proposition}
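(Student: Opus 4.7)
The plan is to express $E^N_\beta$ as an explicit $2N$-dimensional Gaussian integral using the Markov property of the Ornstein-Uhlenbeck process, and then to identify it with $\int v\,\mathcal{K}^{N-1}v$ via a global rescaling together with a direct coefficient match.

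First, since $X$ and $Y$ are independent copies of the 1D Ornstein-Uhlenbeck process, the 2D vectors $s_k := (X_{k\beta}, Y_{k\beta}) \in \mathbb{R}^2$, $0 \le k \le N-1$, form a stationary Gaussian Markov chain with marginal density $\phi_2(s) = (2\pi)^{-1} e^{-|s|^2/2}$ and transition density
\[
q(s,s') \;=\; \frac{1}{2\pi\sigma^2}\exp\!\Bigl(-\frac{|s'-as|^2}{2\sigma^2}\Bigr), \qquad a := e^{-\beta}, \; \sigma^2 := 1-a^2.
\]
Plugging $\phi_2(s_0)\prod_k q(s_{k-1}, s_k)$ into $E^N_\beta = \mathbb{E}[\prod_k |s_k|^2]$ yields
\[
E^N_\beta \;=\; \frac{1}{2\pi (2\pi\sigma^2)^{N-1}} \int_{\mathbb{R}^{2N}} \prod_{k=0}^{N-1}|s_k|^2 \, e^{Q(\mathbf{s})} \, d\mathbf{s},
\]
where $Q(\mathbf{s})$ is the negative quadratic form with boundary diagonal coefficients $-1/(2\sigma^2)$ at $k \in \{0, N-1\}$, bulk diagonal coefficients $-(1+a^2)/(2\sigma^2)$ for $k \in \{1, \ldots, N-2\}$, and nearest-neighbour cross terms $+(a/\sigma^2)\langle s_{k-1},s_k\rangle$.

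Next, I would unfold the iterated kernel operator as
\[
\int v(s_0)\mathcal{K}^{N-1}v(s_0)\,ds_0 \;=\; \int_{\mathbb{R}^{2N}} v(s_0)f(s_0)\prod_{k=1}^{N-2}f(s_k)^2 \cdot f(s_{N-1})v(s_{N-1}) \prod_{k=1}^{N-1} p_t(s_{k-1},s_k)\, d\mathbf{s},
\]
and check that the $|s|$-prefactors of $v$ and $f$ collapse exactly to $\prod_k |s_k|^2$. The central algebraic claim is that, with the stated $f$, $v$ and $t$, the aggregate quadratic exponent equals $\lambda\, Q(\mathbf{s})$, where $\lambda := 2(1+a)/(1-a)$. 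This is verified by a direct coefficient check: $v(s)f(s) = e^{1/2}|s|^2 e^{-|s|^2/(1-a)}$ contributes $-1/(1-a)$ to the boundary diagonal; each $f(s_k)^2 = e \cdot |s_k|^2 e^{-|s_k|^2}$ contributes $-1$ to the bulk diagonal; and each $p_t$ contributes $-a/(1-a)^2$ to both endpoint diagonals together with $+2a/(1-a)^2$ to its cross term. A short simplification using $(1-a)^2 + 2a = 1+a^2$ then shows that the three families of coefficients (boundary, bulk, cross) all scale by the common factor $\lambda$ relative to $Q$; the value $t = (1-a)^2/(4a)$ and the form of $f$ are the unique choices that force this simultaneous scaling.

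Finally, the global rescaling $s_k = \sqrt\lambda \, \tilde s_k$ converts $\int \prod|s_k|^2 e^{Q(\mathbf{s})} d\mathbf{s}$ into $\lambda^{2N} \int \prod|\tilde s_k|^2 e^{\lambda Q(\tilde{\mathbf{s}})} d\tilde{\mathbf{s}}$, with one factor of $\lambda^N$ coming from the $|s_k|^2$ rescalings and another from the $2N$-dimensional Jacobian. Combining this $\lambda^{2N}$ with the Gaussian normalisation $(2\pi(2\pi\sigma^2)^{N-1})^{-1}$, the $N-1$ heat-kernel normalisations $(4\pi t)^{-1} = a/(\pi(1-a)^2)$, and the overall factor $e^{N-1}$ coming from the constants $e^{1/2}$ in each of $v(s_0)f(s_0)$ and $f(s_{N-1})v(s_{N-1})$ together with $e$ in each intermediate $f^2$, a direct simplification yields the stated prefactor. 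The main obstacle is precisely the uniform scaling identity in the second paragraph: pinning down $f$ and $t$ so that boundary, bulk and cross coefficients all rescale by the \emph{same} factor $\lambda$. Once this is achieved, the rest is bookkeeping of multiplicative constants.
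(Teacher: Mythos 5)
Your proposal is correct and takes essentially the same approach as the paper: both invoke the Markov property of the Ornstein--Uhlenbeck process to write $E^N_\beta$ as a $2N$-dimensional Gaussian integral and then apply the rescaling $s_k \mapsto \sqrt{\lambda}\,s_k$ with $\lambda = 2(1+e^{-\beta})/(1-e^{-\beta})$; your coefficient computations for $Q$, the quadratic form of the kernel side, and the final prefactor all check out and agree with the paper's. The only difference is cosmetic --- the paper transforms the Gaussian density directly and reads off the kernel form, whereas you expand both sides and match the quadratic forms via the factor $\lambda$ --- but the substance is identical.
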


\begin{remark}
Note that although both $v$ and $t$ depend on $\beta$, to ease notation we have dropped the explicit dependence.
\end{remark}

\begin{proof}
The Markov property of the Ornstein-Uhlenbeck process implies that, for any $s, t \ge 0$,
\[   X_{s + t}  |   (X_u)_{u \le s}   \stackrel{d}{=}  \mathcal{N} \left( e^{-t} X_s  , 1 - e^{-2t } \right) ,\]
and so successive conditioning on $(X_{\beta i },Y_{\beta i }) = (x_i, y_i)$ yields a representation of $E^N_\beta$ as the integral of $\Pi_{i=0}^{N-1} (x_i^2 + y_i^2)$ against the Gaussian density
\begin{align*}
& (2 \pi)^{-N} \cdot (1 - e^{-2\beta})^{-(N-1)} \cdot  \\
& \quad  \cdot \exp \Big\{ - \frac{1}{2}(x_0^2 + y_0^2)  - \sum_{i = 1}^{N-1}   \frac{1}{2(1 - e^{-2\beta}) } \left( (x_{i } - e^{-\beta} x_{ i-1 } )^2 + (y_{i } - e^{-\beta} y_{ i-1 } )^2 \right) \Big\} . 
\end{align*}
Abbreviating $s_i = ( x_{i}, y_{i}   )$ and rearranging terms, the above density is equal to
\begin{align*}
& (2 \pi)^{-N} \cdot (1 - e^{-2\beta})^{-(N-1)} \cdot  \\
& \quad \cdot \exp \Big\{ - \frac{1}{4} \Big( |s_0|^2 + |s_{N-1}|^2   \Big)  -  \sum_{i =0}^{N-2}  \Big(   \frac{1 - e^{-\beta}}{4(1+e^{-\beta})} ( |s_i|^2 + |s_{i +1}|^2 )  + \frac{e^{-\beta}}{2(1 - e^{-2\beta})}  |s_i - s_{i+1} |^2    \Big) \Big\} . 
\end{align*}
Applying the change of variables
\[  s_i \mapsto  \sqrt{ \frac{2 (1+e^{-\beta} )}{ 1 - e^{-\beta} } } s_i    \]
yields that $E^N_\beta$ is the integral of 
\begin{align*}
&  \Pi_{i=0}^{N-1} |s_i|^2  \cdot (2 \pi)^{-N} \cdot (1 - e^{-2\beta})^{-(N-1)} \cdot \left( \frac{2 ( 1 + e^{-\beta} ) }{1 - e^{-\beta} } \right)^{2N} \cdot \\
& \quad \cdot \exp \Bigg\{ - \frac{1}{2} \Bigg(   \frac{1+ e^{-\beta}}{1 - e^{-\beta} } \Big(   |s_0|^2 + |s_{N-1}|^2 \Big)     +  \sum_{i =0}^{N-2}  \Big(   \Big( |s_i|^2 + |s_{i +1}|^2 \Big)  + \frac{2 e^{-\beta}}{ (1 - e^{-\beta})^2 }  |s_i - s_{i+1} |^2    \Big) \Bigg)  \Bigg\} \\
&  = (2 \pi)^{-N} \cdot (1 - e^{-2\beta})^{-(N-1)} \cdot \left( \frac{2 ( 1 + e^{-\beta} ) }{1 - e^{-\beta} } \right)^{2N} \cdot  e^{-(N-1)} \cdot \left( \frac{\pi(1 - e^{-\beta})^2 }{e^{-\beta} } \right)^{N-1} \cdot \\
& \quad \cdot  v(s_0) v(s_{N-1}) \prod_{i =0}^{N-2}  f(s_i) p_t(s_i, s_{i+1}) f(s_{i+1}) , 
\end{align*}
which is equivalent to the claimed expression.
\end{proof}

In light of Proposition \ref{p:int}, in order to prove Proposition \ref{p:noncrit} it remains to give bounds on the (iterated) integral
\begin{equation}
\label{e:innerproduct}
 \int_{s \in \mathbb{R}^2}  v(s)  (\mathcal{K}^{N-1} v)(s) \, ds .
 \end{equation}
 In particular, we prove the following:
 
 \begin{proposition}
 \label{p:boundint}
If $\beta \ll 1$ then,
\[    \Big|   \log  \int_{s \in \mathbb{R}^2}  v(s) (\mathcal{K}^{N-1} v)(s) \, ds  \Big|  = O(\beta N) +   O(1/\beta)   . \]
\end{proposition}

We prove Proposition~\ref{p:boundint} by applying a spectral analysis of the operator $\mathcal{K}$. Before turning to the proof, let us show how it implies Proposition \ref{p:noncrit}:

\begin{proof}[Proof of Proposition \ref{p:noncrit}]
Suppose $1/N \ll \beta \ll 1$. Combining Proposition \ref{p:int} and the first statement of Proposition \ref{p:boundint} yields
\[  \frac{1}{N} \log E^N_\beta  = \log2  + \log \left(  \frac{e^\beta (1 + e^{-\beta}) }{e(1-e^{-\beta})} \right)  +  O((- \log \beta) / N) + O(1/N)  + O(\beta ) + O(1/(\beta N)) . \]
Since, as $\beta \to 0$,
\[  \log \left(  \frac{e^\beta (1 + e^{-\beta}) }{e(1-e^{-\beta})} \right)  = \log(2 / \beta) - 1 + O(\beta ),  \]
and 
\[ ( - \log \beta) / N  \ll O(1/ (\beta N) ) , \]
we have the result.
\end{proof}

\begin{remark}
Although the kernel expansion in Proposition \ref{p:int} is also valid in the supercritical regime $\beta \ll 1/N$, it is more difficult to control in that regime; in fact, what differentiates the subcritical and supercritical regimes is that, in the former, the main contribution to $E^N_\beta$ is carried by the principle eigenvalue of $\mathcal{K}$.
\end{remark}

\subsection{Spectral analysis of the kernel}
f
We now prove Proposition~\ref{p:boundint} by applying a spectral analysis of the operator $\mathcal{K}$. We begin with some simple facts about this operator; here the fact that the weight function $f$ has maximum value $1$ attained on the annulus $\{|s| = 1\}$ will be crucial.

\begin{lemma}[Spectral analysis of the operator $\mathcal{K}$]
\label{l:prop}
~\begin{enumerate} 

\item There exists an orthonormal basis of eigenvectors $(\varphi_i)_{i \in \mathbb{N}}$ of $\mathcal{K}$ with associated eigenvalues 
\[ 1 >  \lambda_1 > \lambda_2 \ge \ldots  > 0  \] 
satisfying
\[ \| \lambda_i \|_2 \le  \frac{1}{\sqrt{8 \pi t}}  \, \| f \|_2   ,   \]
where $ \| \lambda_i \|_2$ denotes the $\ell^2$-norm of the vector $(\lambda_i)_{i \in \mathbb{N}}$.
\item As $\beta \to 0$,
\[ \lambda_1 = 1 -  O(\beta)  . \]

\item  Each eigenfunction satisfies 
\[ \| \varphi_i \|_1 \ge \sqrt{8 \pi t}  \,  \lambda_i  ,\]
and, for every Borel set $A \subseteq \mathbb{R}^2$ such that $\| f \id_{A^c} \|_\infty  \le \lambda_i$,
\[  \| \varphi_i \id_A \|_1 \ge  \sqrt{8 \pi t}  \, \lambda_i  \times \frac{\lambda_i - \| f \id_{A^c} \|_\infty }{1 - \|f \id_{A^c} \|_\infty }.   \]
\end{enumerate}
\end{lemma}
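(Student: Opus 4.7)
The plan is to address (1), (2), (3) in turn, drawing respectively on spectral theory for compact positive self-adjoint operators combined with Jentzsch's theorem, a variational test function concentrated on the extremal circle, and Cauchy--Schwarz applied directly to the eigenvalue equation.

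For (1), I would verify that $\mathcal{K}$ is Hilbert--Schmidt, self-adjoint, positive, and injective on $L^2(\mathbb{R}^2)$. Self-adjointness is immediate from the symmetry of $K(x,y) = f(x) p_t(x,y) f(y)$, and positivity follows from $\langle \mathcal{K} u, u\rangle = \|e^{t\Delta/2}(fu)\|_2^2 \geq 0$. The Hilbert--Schmidt bound exploits the identity
\[
p_t(x,y)^2 = \frac{1}{8 \pi t}\, g_t(x-y),
\]
where $g_t(u) = (2\pi t)^{-1} e^{-|u|^2/(2t)}$ is the centred Gaussian density of variance $t$ on $\mathbb{R}^2$; this also delivers the claimed norm formula via $\mathrm{tr}(\mathcal{K}^2) = \int\!\int K^2$. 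Injectivity follows from $f > 0$ almost everywhere together with injectivity of convolution against $p_t$, forcing $\lambda_i > 0$ for all $i$. Simplicity of $\lambda_1$ and positivity of the first eigenfunction $\varphi_1$ are then supplied by Jentzsch's theorem, valid for integral operators with continuous, strictly positive kernels. The strict bound $\lambda_1 < 1$ follows by evaluating the eigenvalue equation at a maximum point $x^\ast$ of $\varphi_1$ and using $\varphi_1 \leq \varphi_1(x^\ast)$ together with $\int p_t(x^\ast, y) f(y)\,dy < 1$, which holds because $f < 1$ on a set of positive measure.

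For (2), the lower bound $\lambda_1 \geq 1 - O(\beta)$ is obtained variationally through
\[
\lambda_1 \geq \frac{\langle \mathcal{K} \varphi, \varphi\rangle}{\|\varphi\|_2^2} = \frac{\|e^{t\Delta/2}(f\varphi)\|_2^2}{\|\varphi\|_2^2}
\]
for a trial function $\varphi$ concentrated on the unit circle, where $f$ attains its maximum value $1$. A Taylor expansion gives $f(s) = 1 - (|s|-1)^2 + O((|s|-1)^3)$ near the circle, so the penalty for departing from the circle is quadratic in the radial direction only. A radial Gaussian trial function of width $\sigma$ about the circle incurs a dispersive cost from $e^{t\Delta/2}$ scaling like $t/\sigma^2$ and a restoring cost from $1 - f^2$ scaling like $\sigma^2$; optimising over $\sigma$ gives $\sigma \sim t^{1/4}$ and a Rayleigh quotient at least $1 - C\sqrt{t}$. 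Since $t \sim \beta^2/4$ as $\beta \to 0$, this yields $\lambda_1 \geq 1 - O(\beta)$, matching the upper bound $\lambda_1 < 1$ from (1).

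For (3), Cauchy--Schwarz in the eigenvalue equation, together with $\|f\|_\infty = 1$, $\|\varphi_i\|_2 = 1$ and $\|p_t(x, \cdot)\|_2^2 = 1/(8 \pi t)$, gives
\[
\lambda_i |\varphi_i(x)| \leq f(x)\, \|p_t(x, \cdot)\|_2\, \|f \varphi_i\|_2 \leq \frac{1}{\sqrt{8 \pi t}},
\]
whence $\|\varphi_i\|_\infty \leq (\lambda_i \sqrt{8 \pi t})^{-1}$ and $\|\varphi_i\|_1 \geq \|\varphi_i\|_2^2/\|\varphi_i\|_\infty \geq \sqrt{8 \pi t}\, \lambda_i$. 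For the refined bound, observe that $\lambda_i |\varphi_i| \leq \mathcal{K}|\varphi_i|$ holds pointwise. Integrating and applying Fubini with $\int p_t(x, y) f(x)\, dx \leq 1$ yields $\lambda_i \|\varphi_i\|_1 \leq \int f(y)|\varphi_i(y)|\, dy$. Splitting the right-hand side over $A$ and $A^c$ and bounding $f \leq \|f \id_{A^c}\|_\infty$ on $A^c$ gives
\[
(\lambda_i - \|f \id_{A^c}\|_\infty) \|\varphi_i\|_1 \leq (1 - \|f \id_{A^c}\|_\infty) \|\varphi_i \id_A\|_1,
\]
from which the claimed bound follows by substituting the first inequality for $\|\varphi_i\|_1$.

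The main obstacle is (2): producing the sharp rate $\lambda_1 = 1 - O(\beta)$, rather than merely $\lambda_1 \to 1$, requires the trial function to be matched to the harmonic-oscillator structure of the problem near the extremal circle. In local radial coordinates the localised operator resembles the semi-group of a harmonic oscillator, and the correct scaling $\sigma \sim t^{1/4}$ is dictated by balancing radial localisation against heat-semi-group dispersion. The remaining parts are comparatively routine once the kernel identity $p_t^2 = g_t/(8\pi t)$ and the contractivity of the ambient heat semi-group are exploited.
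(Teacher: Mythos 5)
Your proposal is correct and takes essentially the same approach as the paper: the Hilbert--Schmidt computation and Perron--Frobenius/Jentzsch argument for (1), a variational lower bound with a test function localised at radial scale $\beta^{1/2}$ near the extremal circle $\{|s|=1\}$ for (2) (you use a Gaussian, the paper uses a principal Dirichlet eigenfunction of a small ball — the $\sigma \sim t^{1/4}$ scaling is the same), and Young's/Cauchy--Schwarz followed by an $L^1$ splitting over $A$ and $A^c$ for (3). Your route through $\lambda_i|\varphi_i| \leq \mathcal{K}|\varphi_i|$ in part (3) is in fact slightly cleaner than the paper's, which implicitly requires taking absolute values before splitting; and your interpolation $\|\varphi_i\|_1 \geq \|\varphi_i\|_2^2/\|\varphi_i\|_\infty$ replaces the paper's direct Young estimate on the $L^2$ norm, arriving at the same bound.
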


\begin{proof}
\textbf{(1)} Remark that $\mathcal{K}$ is an integral operator with kernel 
\[ K(s_1, s_2) = f(s_1) p_t(s_1, s_2) f(s_2)  , \]
and note that
\begin{align*}
 \| K \|^2_2  &  =  \int  f(s_1)^2    \int  p_t(s_1, s_2)^2   f(s_2)^2 \,  ds_2 ds_1   \\
  & =\frac{1}{8 \pi t}  \int  f(s_1)^2    \int  p_{t/2} (s_1,s_2)  f(s_2)^2 \,  ds_2 ds_1   \\
  &  \le \frac{ \|f\|_\infty^2 }{8 \pi t}  \int \! \!   \int  p_{t/2} (s_1,s_2)  f(s_2)^2 \,  ds_2 ds_1 \\
   & = \frac{1}{8 \pi t}  \|  f \|_2^2  ,
  \end{align*}
where in the second equality we used an explicit computation with the heat kernel, and in the last step we used the fact that $\|f\|_\infty = 1$ and the fact that the heat kernel preserves the $L^1$-norm of $f^2 \ge 0$. Hence $\mathcal{K}$ is in fact a Hilbert-Schmidt operator, with Hilbert-Schmidt norm
  \[   \| \mathcal{K}\|_{\text{HS}} = \| \lambda_i \|_2   = \| K \|_2 \le  \frac{1}{\sqrt{8 \pi t}}  \|  f \|_2.    \]
Moreover,~$\mathcal{K}$ is positive-definite since $K$ is a weighted heat kernel with positive weights, and so its eigenvalues are non-negative. Finally, since $K$ is (point-wise) positive, by the Perron-Frobenius theorem the principal eigenvalue $\lambda_1$ is simple and $\varphi_1 \ge 0$. Moreover $\lambda_1 < 1$ since
  \[  \lambda_1 \| \varphi_1 \|_1 = \| f e^{t \Delta} f \varphi_1 \|_1 < \| f \|_{\infty}^2 \| e^{t \Delta} \varphi_1 \|_1 = \| f\|_{\infty}^2 \| \varphi_1 \|_1 = \| \varphi_1 \|_1 ,\]
where the strict inequality in the second step comes from the fact that $f = \|f\|_\infty$ only on the annulus $\{|s| = 1\}$ (on which $e^{t \Delta} \varphi_1$ cannot be exclusively supported), and the third step holds since the heat kernel preserves the $L^1$-norm of $\varphi_1 \ge 0$.

\smallskip
\textbf{(2)} We use the min-max formula for the principal eigenvalue evaluated against a well-chosen test function. For each $c > 0$ define the ball $B_c := \{s : |s - 1| \le c \}$, and let $\varphi_{1;c}$ and $-\lambda_{1;c}$ denote respectively the principal Dirichlet ($L^2$-normalised) eigenfunction and eigenvalue of $\Delta$ in $B_c$, noting that $\varphi_{1;c} \ge 0$. Applying the min-max formula to $\varphi_{1;c}$, 
\[    \lambda_1 \ge   \| f e^{t \Delta} f \varphi_{1;c} \|_{2}  \ge  \big(   \inf_{x \in B_c} f^2(x)   \big)   \| e^{t \Delta} \varphi_{1;c} \|_{2}  =  \big( \inf_{x \in B_c} f^2(x)  \big) e^{- t \lambda_{1;c}}  . \]
Now observe that $f|_{B_c} > 1 - O(c^2)$, $t = O(\beta^2)$, and by the rescaling property of Dirichlet eigenvalues,
\[  \lambda_{1;c} = O(c^{-2}) \quad \text{as } c \to 0   .  \]
Specialising to $c = \beta^{1/2} \gg \sqrt{t}$, as $\beta \to 0$,
\[     \lambda_1 \ge  (1 - O(c^2) )  (1 - O(t/c^2) )   = 1 - O(\beta) .\]

\smallskip
\textbf{(3)} We lower bound $\|\varphi_i\|_1$ via
\begin{align*}
   \lambda_i   = \lambda_i  \| \varphi_i \|_2  = \| f e^{t \Delta} f \varphi_i\|_2     \le \| f\|_\infty \| p_t(\cdot, 0) \|_2 \|f \|_\infty \| \varphi_i \|_1  = \frac{1}{\sqrt{8 \pi t}} \|\varphi_i\|_1    ,
  \end{align*}
  where the inequality in the third step is Young's convolution inequality, and the fourth step is via an explicit computation with the heat kernel. Moreover,
  \begin{align*}
    \lambda_i \| \varphi_i \|_1 \le \|    f e^{t \Delta} f |\varphi_i|  \|_1 & = \|    f e^{t \Delta} f |\varphi_i| \id_{A^c} \|_1 + \|    f e^{t \Delta} f |\varphi_i| \id_{A} \|_1  \\
  & \le \| f \|_\infty \, \|f \id_{A^c} \|_\infty \, \| \varphi_i \id_{A^c} \|_1 + \| f \|_\infty^2  \, \|\varphi_i \id_{A} \|_1 \\
  & = \|f \id_{A^c} \|_\infty \| \varphi_i  \|_1 + (1 - \|f \id_{A^c} \|_\infty ) \|  \varphi_i \id_{A} \|_1 .
    \end{align*}
 Rearranging, this gives
 \[  \| \varphi_i \id_A \|_1 \ge  \|  \varphi_i \|_1  \times \frac{\lambda_i - \| f \id_{A^c} \|_\infty }{1 - \|f \id_{A^c} \|_\infty }   \]
 which, after combining with the lower bound on $ \|\varphi_i\|_1$, yields the result. 
\end{proof}

We can now complete the proof of Proposition \ref{p:boundint}:

\begin{proof}[Proof of Proposition \ref{p:boundint}]
Consider the spectral expansion
\[     \int_{s \in \mathbb{R}^2}  v(s) (\mathcal{K}^{N-1} v)(s) \, ds  = \sum_{i \ge 1} \lambda_i^{N-1} \langle \varphi_i, v \rangle^2 . \]
Since $\lambda_i \in (0, 1)$ by property $(1)$ of Lemma \ref{l:prop}, this implies the bounds, for each $N \ge 3$,
\begin{equation}
\label{e:boundint1}
  \lambda_1^{N-1}  \langle \varphi_1, v \rangle^2  \le     \int_{s \in \mathbb{R}^2}  v(s) (\mathcal{K}^{N-1} v)(s) \, ds   \le  \| \lambda_i \|_2^2  \, \sup_i   \langle \varphi_i, v \rangle^2 . 
  \end{equation}
Let us treat first the upper bound in \eqref{e:boundint1}. By the Cauchy-Schwartz inequality and the upper bound on $\|\lambda_i\|_2$ in property $(1)$ of Lemma~\ref{l:prop} (recall also that $\|\varphi_i\|_2 = 1$),
\[   \| \lambda_i \|_2^2  \, \sup_i   \langle \varphi_i, v \rangle^2 \le  \| \lambda_i \|_2^2   \, \|  v \|_2^2 \le \frac{1}{8 \pi t} \| f \|^2_2   \|  v \|_2^2  =  \frac{ e^{-\beta}}{2 \pi (1-e^{-\beta})^2}   \| f \|^2_2  \| v \|_2^2 .  \]
Observe next that, as $\beta \to 0$,
\[   \frac{ e^{-\beta}}{2 \pi (1-e^{-\beta})^2}  = O(\beta^{-2})  , \]
and also that $\|f\|_2 < \infty$ and $\|v\|_2 \to 0$ as $\beta \to 0$. Hence
\[     \| \lambda_i \|_2^2  \, \sup_i   \langle \varphi_i, v \rangle^2   = o(\beta^{-2}) ,\]
We turn to the lower bound in \eqref{e:boundint1}. Appealing to property (3) in Lemma~\ref{l:prop} (and recalling that $\varphi_1$ and $v$ are non-negative),
\begin{align*}
    \lambda_1^{N-1}  \langle \varphi_1, v \rangle^2 &  \ge \lambda_1^{N-1}  \sup_{A \subseteq \mathbb{R}^2} \left( \inf_{s \in A} v(s) \| \varphi_1 \id_A \|_1 \right)^2      \\
    & \ge 2 \pi e^\beta ( 1 - e^{-\beta} )^2 \lambda_1^{N+1} \, \sup_{A \subseteq \mathbb{R}^2}   \,    \left( \frac{ \inf_{s \in A} v(s)   (\lambda_1 -  \|f \id_{A^c} \|_\infty ) }{1 - \|f \id_{A^c} \|_\infty  }  \right)^2 .
    \end{align*}
Now define the annulus $A = \{ s : |s| \in (1/2, 3/2) \}$, which satisfies, as $\beta \to 0$,
\[      \inf_{s \in A} v(s)  \ge  e^{ - O(1/\beta) }  \]
and $\| f \id_{A^c} \|_\infty \in (0, 1)$. Since $\lambda_1 = 1 - O(\beta)$ by property $(2)$ of Lemma~\ref{l:prop}, as $\beta \to 0$,
\[   \lambda_1^{N-1}  \langle \varphi_1, v \rangle^2 \ge  (1 + O(\beta))^{N}  e^{-O(1/\beta)}.  \] 
To sum up, we have shown that, as $\beta \to 0$,
\[   (1 + O(\beta))^{N}  e^{-O(1/\beta)} \le    \int_{s \in \mathbb{R}^2}  v(s) (\mathcal{K}^{N-1} v)(s) \, ds  \le  o(\beta^{-2})  \]
which, upon taking the logarithm, establishes the claim.
\end{proof}


\medskip

\section{The critical regime: Large deviations and variational problems}
\label{s:ld}

In this section we study the Gaussian expectation $E^N_\beta$ in the critical regime $\beta \sim c / N$ in all dimensions $d \ge 1$ via large deviation techniques. The initial aim is to prove that the asymptotics of $E^N_\beta$ are governed by the solution~$V_G^d$ to the variational problem $(VP\!:\!G)$:

 \begin{proposition}
 \label{p:crit}
Let $c > 0$ and $\beta \sim c /N$. Then the variational problem $(VP\!:\!G)$ has a finite solution $V_{G}^{d} (c)$ that is attained for a pair $f_1, f_2 \in C(\mathbb{R}^d)$. Moreover, as $N \to \infty$,
\[    \frac{1}{N^d} \log E^N_\beta =  d \log N +  V_G^d(c) + o(1)  . \]
 \end{proposition}
 
The secondary aim is to undertake an analysis of the function $V_G^d$, and in particular establish the properties listed in Proposition \ref{p:vprop}. In $d=1$ our analysis rests on the applicability of the classical Euler-Lagrange methods of the calculus of variations. This, again, is ultimately due to the Markov property of the Ornstein-Uhlenbeck process.

\subsection{Large deviation theory: Varadhan's lemma}

Our proof of Proposition \ref{p:crit} is based on an application of Varadhan's lemma in the setting of centred Gaussian measures on separable Banach spaces. Let us begin by recalling the relevant elements of the theory now.

\smallskip
Rather than work in the most general setting, let us specialise immediately to the case relevant to us. Recall from Section \ref{s:h} that $\Psi$ denotes the centred, almost surely continuous Gaussian field on $\mathbb{R}^d$ with covariance given by the Laplacian kernel \eqref{e:la}. Recall also from Section~\ref{s:vf} the entropy functional $I$ associated to $\Psi$. For each $c > 0$ we denote by $I_c$ the analogous entropy function on $f \in C[0, c]^d$, defined by
\[   I_c[f]  = \begin{cases}
\frac{1}{2} \|f\|_{\mathcal{H}_c} , & \text{if } f \in \mathcal{H}_c , \\
\infty, & \text{else}.
\end{cases}   \] 
Equip the set $C[0, c]^d$ with the usual topology generated by the sup-norm. Since this space is separable, it is well-known \cite{DV76} that $\Psi|_{[0, c]^d}$ satisfies a large deviation principle (LDP) with rate function given by $I_c$; more precisely,
\[   \limsup_{n \to \infty}  \frac{1}{n} \log \mathbb{P}(\Psi|_{[0, c]^d} /  \sqrt{n} \in \mathcal{A} ) \le  - \inf_{ f \in \mathcal{A} } I_c[f]     \ , \quad \mathcal{A}  \subset C[0, c]^d \text{ closed}   \]
and
\[   \liminf_{n \to \infty} \frac{1}{n} \log \mathbb{P}( \Psi|_{[0, c]^d} / \sqrt{n}  \in \mathcal{A} ) \ge  - \inf_{ f \in \mathcal{A} } I_c[f]     \ , \quad \mathcal{A}  \subset C[0, c]^d \text{ open}  . \]

It was shown in \cite[Section 3]{V66} (see also \cite[Theorem 1.4]{ER82}) that the existence of the LDP implies a version of Varadhan's lemma for sequences of converging functionals that satisfy a set of conditions. Again we state this result only in the case relevant to us, that of functionals on the product space $C[0,c]^d \times C[0,c]^d$:

\begin{theorem}[Varadhan's lemma; see {\cite[Section 3]{V66}}]
\label{t:vl}
Fix $c > 0$ and let $s_n$ be a scale satisfying $s_n \to \infty$ as $n \to \infty$. Let $X$ and $Y$ be two independent copies of $\Psi|_{[0, c]^d}$. Let $(F_n)_{n \in \mathbb{N}}$ and $F$ be measurable functions mapping $C[0, c]^d \times C[0, c]^d\mapsto \{-\infty\} \cup \mathbb{R}$ which satisfy the following two conditions:
\begin{enumerate}
\item As $n \to \infty$,
\[  F_n[f_n, g_n] \to F[f, g] \]
for all functions $(f_n)_{n \in \mathbb{N}}, (g_n)_{n \in \mathbb{N}}, f, g \in C[0, c]^d$ such that 
\[ \| f_n - f \|_\infty \to 0 \quad \text{and}  \quad \| g_n - g \|_\infty \to 0 ;  \text{ and } \] 
\item 
\begin{equation}
\label{e:tail}
 \lim_{L \to \infty} \limsup_{n \to \infty} \frac{1}{s_n} \log \mathbb{E} \left[ e^{s_n F_n[X / \sqrt{s_n}, Y / \sqrt{s_n} ] } \id_{F_n[X/ \sqrt{s_n}, Y / \sqrt{s_n} ] \ge L } \right]  = - \infty .
\end{equation}
\end{enumerate}
Then
\begin{equation}
\label{e:sup}
  \lim_{n \to \infty} \frac{1}{s_n} \log \mathbb{E}[ e^{s_n F_n[X/\sqrt{s_n}, Y/\sqrt{s_n}] } ]  =  \sup_{f_1, f_2 \in C[0, c]^d} F[ f_1, f_2 ] - I_c[f_1] - I_c[f_2] .
  \end{equation}
Moreover, if $F$ is lower semicontinuous, meaning that
\[   \liminf_{n \to \infty} F[f_n, g_n] \ge F[f, g] \]
for all functions $(f_n)_{n \in \mathbb{N}}, (g_n)_{n \in \mathbb{N}}, f, g \in C[0, c]^d$ such that 
\[ \| f_n - f \|_\infty \to 0 \quad \text{and}  \quad \| g_n - g \|_\infty \to 0 ,\] 
then the supremum in \eqref{e:sup} is finite and attained at some point in $C[0, c]^d \times C[0, c]^d$.
\end{theorem}

We now show how to apply Theorem \ref{t:vl} to extract the asymptotic growth-rate of $E^N_\beta$. Fix $c' > c > 0$ and assume that $\beta \sim c/N$. Recalling the set $\mathcal{T}^N :=  [\![1, N]\!]^d \subset \mathbb{Z}^d$, define $\mathcal{T}_\beta^N = \beta \mathcal{T}^N \subset \beta \mathbb{Z}^d$ and observe that, for sufficiently large $N$, $\mathcal{T}_\beta^N \subset [0, c']^d$; henceforth we will take $N$ sufficiently large such that this property holds. Define the functional $F_N : C[0, c']^d \times C[0, c']^d \to \{-\infty\} \cup \mathbb{R}$,
\[    F_N[f_1, f_2]  =  \frac{1}{N^d} \sum_{x \in \mathcal{T}_\beta^N}   \log(f_1(x)^2 + f_2(x)^2)  \]
(with the standard convention $\log 0 = -\infty$). Observe in particular that the domain of $F_N$ are functions in $C[0, c']^d$ and not $C[0, c]^d$; this is necessary since $\mathcal{T}_\beta^N$ may not lie in $[0, c]^d$ in general. Notice also that
\begin{align}
\label{e:e2}
 E^N_\beta  =  \mathbb{E} \big[ \prod_{x \in \mathcal{T}_\beta^N}  ( X^2_x + Y^2_x )   \big]  = e^{N^d  \log N^d} \, \mathbb{E} [ e^{  N^d F_N [X/ \sqrt{N^d} , Y / \sqrt{N^d} ] }  ] 
 \end{align}
where $X$ and $Y$ are independent copies of $\Psi|_{[0, c']^d}$.

\smallskip
Interpreting $F_N$ as a Riemann sum, define also a limiting version of $F_N$:
\[   F[f_1, f_2] = \frac{1}{c^d} \int_{s \in [0, c]^d} \log ( f_1(s)^2 + f_2(s)^2 )  \, ds  \ ,   \quad f_1, f_2 \in C[0, c']^d . \]
Since $\beta \sim c/N$ and $f_1, f_2$ are continuous, by the Riemann sum approximation of the Riemann integral on compact sets, the functionals $F_N$ converge to $F$ in the sense that the first condition of Theorem \ref{t:vl} holds. To check the second condition of Theorem \ref{t:vl}, we observe that, for all $\nu > 0$, there exists $C$ such that, for sufficiently large $N$,
\[  F_N[f_1, f_2 ]   \le 2 F[f_1,f_2] \le   \nu \left( \| f_1 \|_\infty^2 + \|f_2 \|_\infty^2 \right)   + C     \ , \quad f_1, f_2 \in C[0, c']^d .\]
Fixing $\nu > 0$ and defining 
\[ Z^2 :=  \nu \big( \|X\|_\infty + \|Y\|_\infty \big)^2 \ge \nu \big(\|X\|_\infty^2 + \|Y\|_\infty^2 \big) ,  \]
we see that \eqref{e:tail} is bounded above by
\[  \limsup_{L \to \infty} \limsup_{n \to \infty}  \frac{1}{s_n} \log \mathbb{E} \left[ e^{Z^2} \id_{ Z^2 \ge s_n (L-C) } \right] + C  . \]
Since $Z$ has a Gaussian upper tail (by the Borel-TIS inequality, for instance), we can choose $\nu > 0$ small enough so that $\mathbb{P}(Z^2 > z) \le e^{-2z}$ eventually as $z \to \infty$, which implies that 
\[ \limsup_{n \to \infty} \frac{1}{n} \log \mathbb{E} \left[ e^{Z^2} \id_{ Z^2 \ge n (L-C)} \right]   \le  - (L-C)  \]
as required. Finally, $F$ is readily seen to be lower semicontinuous (in fact continuous). Hence all the conditions of Theorem \ref{t:vl} are satisfied for this choice of $F_N$ and $F$.

\begin{proof}[Proof of Proposition \ref{p:crit}]
By equation \eqref{e:e2} and an application of Theorem \ref{t:vl} with the setting $s_n = N^d$, 
\begin{align*}  
 \lim_{N \to \infty} \left(  \frac{1}{N^d} \log E^N_\beta  - d \log N  \right) &=  \lim_{N \to \infty}   \frac{1}{N^d} \log  \mathbb{E} [ e^{  N^d F_N[ X/ \sqrt{N^d} , Y / \sqrt{N^d} ] }  ]   \\
 & =  \sup_{f'_1, f'_2 \in C[0, c']^d} F(f'_1, f'_2) - I_{c'}(f'_1) - I_{c'}(f'_2) , 
  \end{align*}
where the supremum is finite and attained for some $f'_1, f'_2 \in [0, c']^d$ (the attainment of the supremum is part of the conclusion of Theorem \ref{t:vl}). Recalling the energy functional $J_c$ from Section \ref{s:vf}, to complete the proof it remains only to show that
\[ \sup_{f'_1, f'_2 \in C[0, c']^d} F [f'_1, f'_2]  - I_{c'}[f'_1] - I_{c'}[f'_2]  = \sup_{f_1, f_2 \in C(\mathbb{R}^d) } J_c[ f_1, f_2] - I[f_1] - I[f_2] , \]
and that the supremum on the right-hand side is attained for some $f_1, f_2 \in C(\mathbb{R}^d)$. This follows from the fact that $F[f'_1, f'_2] = J_c[ f_1'|_{[0,c]^d}, f_2'|_{[0,c]^d}]$, and the restriction property of the norm $\|\cdot\|_\mathcal{H}$ stated in~\eqref{e:restrict}.
\end{proof}

\subsection{Analysis of the variational problem}

We proceed to analyse the function $V_G^d$, and in particular prove Proposition \ref{p:vprop}. Let us begin with some properties that are an immediate consequence of Proposition \ref{p:crit}:

\begin{lemma}
\label{l:prop2}
The function $V_G^d(c)$ is continuous, non-increasing and convex.
\end{lemma}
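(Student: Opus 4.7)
The cleanest route is to inherit these properties from the partition function $Z^N_\beta$ via the convergence established in Proposition~\ref{p:crit}, rather than attacking the variational problem directly. Indeed, proving monotonicity from $(VP\!:\!G)$ by comparing optimisers at two different values of $c$ is awkward because both the domain of integration and the prefactor in $J_c$ depend on $c$, so the plan is to sidestep this by using the pre-limit representation.

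The plan is as follows. First I would combine Proposition~\ref{p:id} (which gives $E^N_\beta = 2^{N^d} Z^N_\beta$) with Proposition~\ref{p:crit} to write, for every $c > 0$,
\[
V_G^d(c) \;=\; \lim_{N \to \infty} \Bigl[\, \tfrac{1}{N^d} \log Z^N_{c/N} \;-\; d \log N \;+\; \log 2 \,\Bigr].
\]
Next I would invoke Lemma~\ref{l:convex}, which asserts that for each fixed $N \geq 1$ the map $\beta \mapsto \tfrac{1}{N^d}\log Z^N_\beta$ is strictly decreasing and convex on $(0,\infty)$. Since the substitution $\beta = c/N$ is an affine, strictly increasing change of variable, the function
\[
c \;\longmapsto\; \tfrac{1}{N^d} \log Z^N_{c/N} \;-\; d\log N \;+\; \log 2
\]
is, for each fixed $N$, strictly decreasing and convex in $c$ on $(0,\infty)$ (the additive constants $-d\log N + \log 2$ do not affect either property).

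Finally, I would conclude by recalling that the pointwise limit of a sequence of non-increasing functions is non-increasing, and the pointwise limit of a sequence of convex functions is convex wherever the limit is finite. Applying these two elementary facts to the sequence indexed by $N$ yields immediately that $c \mapsto V_G^d(c)$ is non-increasing and convex on $(0, \infty)$, as claimed. There is no real obstacle in this argument; the only thing worth flagging is that strict monotonicity and strict convexity are not preserved under pointwise limits, which is precisely why the statement of the lemma weakens \emph{strictly decreasing} to \emph{non-increasing} (and makes no strict convexity claim), matching the qualification noted in the remark after Theorem~\ref{t:kms}.
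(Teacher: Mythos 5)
Your proof is correct and follows essentially the same route as the paper: the paper also passes to the pre-limit function $c \mapsto \frac{1}{N^d}\log E^N_{c/N} - d\log N$, uses Lemma~\ref{l:convex} (via Proposition~\ref{p:id}) to see it is decreasing and convex in $c$, and then takes the pointwise limit using Proposition~\ref{p:crit}. Your phrasing in terms of $Z^N_\beta$ rather than $E^N_\beta$ is an immaterial difference (they differ by the additive constant $\log 2$), and your remark about why strictness is lost in the limit is a nice touch that matches the paper's remark following Theorem~\ref{t:kms}.
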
  
\begin{proof}
By Proposition \ref{p:id} and Lemma \ref{l:convex}, the function 
\[ v_N:  c \mapsto \frac{1}{N^d} \log E^N_{c/N} - d \log N \]
 is strictly decreasing and convex for each $N \ge 2$. By Proposition \ref{p:crit}, $V_G^d$ is finite and is the point-wise limit of $v_N$ as $N \to \infty$, and hence $V_G^d$ is also non-increasing and convex. The continuity of $V_G^d$ then follows from the convexity.
\end{proof}

\begin{remark}
Note that we deduced the convexity of $V_G^d(c)$ from the convexity of $E^N_\beta$; it seems difficult to establish this property directly from the definition of $V_G^d$. On the other hand, we could have deduced the continuity of  $V_G^d(c)$  directly from its definition.
\end{remark}

Next we establish the basic asymptotic properties of $V_G^d$ that hold in all dimensions. In particular, we prove the following:

\begin{proposition}
\label{p:propvghigh}
The function $c \mapsto V_G^d(c)$ is continuous. Moreover, as $c \to 0$,
\[ V_G^d(c) = \log 2 - 1 + o(1),  \]
and, as $c \to \infty$,
\[ V_G^d(c) = - d \log c + O(1)  .\]
\end{proposition}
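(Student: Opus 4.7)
The plan is to handle continuity and the two asymptotic relations separately, working directly with the variational formula $(VP\!:\!G)$. \emph{Continuity} is immediate from Lemma \ref{l:prop2}: $V_G^d$ is convex on $(0, \infty)$ and finite-valued (finiteness from above comes from the uniform bound proved below; finiteness from below is obtained by plugging any fixed test function such as $f_1 = f_2 = \kappa(0, \cdot)$ into $(VP\!:\!G)$), and is therefore continuous on this open interval.

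\emph{Asymptotics as $c \to 0$.} For the upper bound, I would combine Jensen's inequality (concavity of $\log$) with the reproducing estimate $|f(x)|^2 \leq \|f\|_{\mathcal{H}_c}^2 \kappa(x,x) = \|f\|_{\mathcal{H}_c}^2$ to obtain
\[
  J_c[f_1, f_2] \leq \log\bigl( c^{-d} \textstyle\int_{[0,c]^d}(f_1^2+f_2^2)\,ds \bigr) \leq \log\bigl( 2(I_c[f_1]+I_c[f_2]) \bigr),
\]
and then maximise $\log(2u) - u$ over $u > 0$ to get the uniform bound $V_G^d(c) \leq \log 2 - 1$. For the lower bound, I use the single-spike test functions $f_i = a_i \kappa(0, \cdot) \in \mathcal{H}$ (for which $I[f_i] = a_i^2/2$), so that
\[
  J_c[f_1, f_2] = \log(a_1^2+a_2^2) - 2 c^{-d}\textstyle\int_{[0,c]^d} |s|\,ds = \log(a_1^2+a_2^2) - O(c),
\]
and optimising over $a_1^2 + a_2^2 = 2$ yields $V_G^d(c) \geq \log 2 - 1 - O(c)$.

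\emph{Asymptotics as $c \to \infty$.} Here the $L^\infty$ reproducing estimate is too weak; instead I would use the continuous embedding $\mathcal{H} \hookrightarrow L^2(\mathbb{R}^d)$, which is a consequence of the boundedness of the spectral density $\hat{\kappa}(\xi) \propto (1+|\xi|^2)^{-(d+1)/2}$. This yields a constant $C_d$ with $\|f\|_{L^2(\mathbb{R}^d)}^2 \leq C_d \|f\|_\mathcal{H}^2$; via the restriction $\|f\|_{\mathcal{H}_c} = \inf \|\tilde{f}\|_\mathcal{H}$ one gets $\int_{[0,c]^d} f^2 \leq 2 C_d I_c[f]$, whence Jensen gives $J_c[f_1,f_2] \leq \log\bigl(2C_d c^{-d}(I_c[f_1]+I_c[f_2])\bigr)$, and optimisation delivers $V_G^d(c) \leq -d\log c + O(1)$. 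For the matching lower bound, take a smooth bump $\chi$ on $\mathbb{R}^d$ equal to $1$ on $[0,c]^d$ and supported in $[-1,c+1]^d$, and set $f_1 = f_2 = v\chi$ with $v = c^{-d/2}$. Then $J_c[f_1,f_2] = \log(2v^2) = -d\log c + \log 2$, and using the equivalence $\|\cdot\|_\mathcal{H} \asymp \|\cdot\|_{H^{(d+1)/2}}$ via the Fourier representation of $\kappa$, one checks $\|f_i\|_\mathcal{H}^2 = v^2 \|\chi\|_{H^{(d+1)/2}}^2 = v^2 \cdot O(c^d) = O(1)$ (since derivatives of $\chi$ are supported on a boundary layer of volume $O(c^{d-1})$).

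The main obstacle is the lower bound as $c \to \infty$, which requires a test function in $\mathcal{H}$ that is approximately constant on $[0,c]^d$ with $\mathcal{H}$-norm of order one. In $d = 1$ this is straightforward from the explicit formula \eqref{e:oun}, but in higher dimensions one must invoke the identification of $\mathcal{H}$ with the Sobolev space $H^{(d+1)/2}(\mathbb{R}^d)$ through the spectral density of $\kappa$; the fractional Sobolev case (even $d$) may require a direct Fourier-analytic estimate of $\|f_i\|_\mathcal{H}^2$ rather than an appeal to standard Sobolev estimates.
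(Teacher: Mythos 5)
Your proposal is correct, and at two points it actually takes a cleaner route than the paper. For \emph{continuity}, the paper argues directly from the modulus of continuity of $c \mapsto J_c[f_1,f_2]$, which requires showing that the sup-norms of the optimizers $(f_1^c, f_2^c)$ are locally bounded; your observation that a finite convex function on an open interval is automatically continuous (with convexity from Lemma \ref{l:prop2} and two-sided finiteness from the uniform upper bound and a single test function) gets the same conclusion with less work. For the $c\to 0$ asymptotics, your Jensen plus reproducing-kernel bound and the direct evaluation of $J_c$ on $a_i\kappa(0,\cdot)$ are essentially the paper's argument (the paper packages the test function in Lemma \ref{l:boundhigh}). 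For the $c \to \infty$ lower bound, the paper uses a band-limited function $\bar f$ (a scaled Fourier transform of an indicator of a shrinking cube), while you use a smooth bump $\chi$ cut off in physical space; these are dual constructions and both work. You should note, however, that the paper's Lemma \ref{l:boundhigh} as stated cannot be literally correct: a function $\bar f \ge \id_{[0,c]^d}$ necessarily has $\|\bar f\|^2_{L^2(\mathbb{R}^d)} \ge c^d$, so by the same lemma's $L^2$-embedding $\|\bar f\|^2_\mathcal{H} \gtrsim c^d$, in contradiction with the claimed bound $c_2 c^{-d}$; the exponent should be $+d$, and indeed the choice $k = c^{-d/2}$ in the paper's subsequent optimisation is only consistent with the corrected $O(c^d)$ bound. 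Your bump construction, with $\|\chi\|^2_{H^{(d+1)/2}} = O(c^d)$, gets exactly this corrected bound and so neatly sidesteps the issue. Finally, your worry about fractional Sobolev norms in even dimension is not a genuine obstacle: since $(1+|\xi|^2)^{s} \lesssim 1 + |\xi|^{2k}$ for any integer $k \ge s$, one has $\|\chi\|^2_{H^{s}} \lesssim \|\chi\|^2_{L^2} + \sum_{|\alpha| = k} \|D^\alpha\chi\|^2_{L^2} = O(c^d) + O(c^{d-1}) = O(c^d)$, which is all that is needed.
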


\begin{corollary}
\label{c:propvghigh}
The function $V_G^d(c)$ is strictly decreasing.
\end{corollary}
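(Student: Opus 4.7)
The plan is to combine three facts that we already have in hand: $V_G^d$ is non-increasing (Lemma \ref{l:prop2}), convex (Lemma \ref{l:prop2}), and satisfies $V_G^d(c) = -d\log c + O(1)$ as $c \to \infty$ (Proposition \ref{p:propvghigh}). The argument is a short convexity argument showing that a non-increasing convex function that fails to be strictly decreasing must eventually be constant, which contradicts the asymptotic decay at infinity.

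More concretely, I would argue by contradiction. Suppose that $V_G^d$ fails to be strictly decreasing. Then there exist $0 < a < b$ with $V_G^d(a) = V_G^d(b)$. Since $V_G^d$ is non-increasing, $V_G^d(t) \le V_G^d(a)$ for $t \in [a, b]$; and since $V_G^d$ is convex, $V_G^d(t) \le \max\{V_G^d(a), V_G^d(b)\} = V_G^d(a)$ on $[a,b]$ as well, while $V_G^d(t) \ge V_G^d(b) = V_G^d(a)$ by monotonicity. Hence $V_G^d$ is constant on $[a,b]$.

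The key step is then to propagate this constancy to $[a, \infty)$. For any $c > b$, convexity applied to the triple $a < b < c$ gives
\[
V_G^d(b) \le \frac{c - b}{c - a}\, V_G^d(a) + \frac{b - a}{c - a}\, V_G^d(c).
\]
Using $V_G^d(a) = V_G^d(b)$ and rearranging yields $V_G^d(a) \le V_G^d(c)$. Combined with the non-increasing property, this forces $V_G^d(c) = V_G^d(a)$ for all $c > b$, so $V_G^d$ is constant on $[a, \infty)$. But this contradicts $V_G^d(c) = -d \log c + O(1) \to -\infty$ as $c \to \infty$ from Proposition \ref{p:propvghigh}. Hence no such $a < b$ exist, and $V_G^d$ is strictly decreasing.

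I do not expect any real obstacles here: the corollary is essentially a routine consequence of the already-proven monotonicity, convexity, and asymptotic decay, and the only subtle point is the convexity propagation argument, which is a one-line manipulation of the three-point convexity inequality.
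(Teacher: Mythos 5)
Your proof is correct and takes essentially the same route as the paper: the paper likewise invokes convexity to conclude that a non-increasing convex function that fails to be strictly decreasing must be eventually constant, and then contradicts the $-d\log c + O(1)$ decay from Proposition~\ref{p:propvghigh}. You have simply spelled out the convexity-propagation step that the paper leaves implicit.
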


To prove Proposition \ref{p:propvghigh} we rely on some additional facts about the norm $\|\cdot\|_\mathcal{H}$. Recall the Laplacian kernel defined in \eqref{e:la} as
\[ \kappa(s, t) = e^{-|s-t|} .\]
Since $\kappa$ is stationary (i.e.\ $\kappa(s, t)$ depends only on $s-t$), we can define $\bar \kappa(s-t) = \kappa(s,t)$. The Fourier transform of $\bar \kappa$ lies in $L_2(\mathbb{R}^d)$, and is given by
\[ \rho(x) = c_0 \left( \frac{1}{1 + (2 \pi x)^2} \right)^{(d+1)/2} \]
for a normalising constant $c_0 > 0$ depending on the dimension; $\rho$ is sometimes called the \textit{spectral density} of $\Psi$. The standard theory of RKHS (see, e.g.,  \cite[Eq.(2.4), p.67]{bertho04}) then gives that, for all $f \in \mathcal{H}$,
\begin{equation}
\label{e:hform}
  \| f \|^2_\mathcal{H} =  \| \hat{f} / \sqrt{\rho} \|^2_2 = \frac{1}{c_0} \int_{x \in \mathbb{R}^d}    (1+ (2 \pi x)^2 )^{ (d+1)/2}  \hat{f}(x)^2 \, dx , 
  \end{equation}
where $\hat{f}$ denotes the Fourier transform of $f$. In the case $d = 1$, this leads directly to~\eqref{e:oun} since 
\[  \| f \|^2_\mathcal{H} =  \frac{1}{2} \int_{x \in \mathbb{R}}   \hat{f}(x)^2 + (2 \pi x \hat{f}(x))^2  \, dx  = \frac{1}{2} \left( \|f\|_2^2 + \|f'\|_2^2 \right) , \]
where in the last equality we applied Parseval's formula. One also sees that, in odd dimensions, $\|f\|_\mathcal{H}$ can be expressed as an integral over the first $(d+1)/2$ derivatives of $f$, which is related to the (pseudo-)domain Markov property mentioned in Remark \ref{r:dmp}.

\smallskip
The above representation of $\|\cdot\|_\mathcal{H}$ has the following easy consequences:

\begin{lemma}
\label{l:boundhigh}
There exists a number $c_1 > 0$ such that, for all $f \in \mathcal{H}$,
\[  \|f \|_\mathcal{H} \ge \| f \|_\infty \quad \text{and} \quad  \| f \|_\mathcal{H} \ge c_1 \|f\|_2  . \]
Moreover, for each $\delta > 0$ there exists a function $\tilde f \in \mathcal{H}$ satisfying, for all $c$ sufficiently small,
 \[   \tilde f|_{[0,c]^d} \ge 1  \quad \text{and} \quad \| \tilde f\|^2_\mathcal{H} \le 1 + \delta . \]
  Finally, there exists a number $c_2 > 0$ such that, for each $c > 1$, there is a function $\bar f \in \mathcal{H}$ satisfying 
 \[   \bar f|_{[0,c]^d}  \ge 1 \quad \text{and} \quad \| \bar f\|^2_\mathcal{H} \le c_2 c^d  . \]
\end{lemma}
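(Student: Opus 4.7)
I will prove the three bounds separately, using the reproducing property \eqref{e:repro}, the Fourier representation \eqref{e:hform}, and explicit constructions from translates of the covariance kernel $\kappa$. For the first pair of bounds, the sup-norm bound follows from Cauchy--Schwarz in $\mathcal{H}$ applied to \eqref{e:repro}: for every $x$,
\[
|f(x)| = |\langle f, \kappa(x,\cdot)\rangle_\mathcal{H}| \le \|f\|_\mathcal{H}\, \|\kappa(x,\cdot)\|_\mathcal{H} = \|f\|_\mathcal{H},
\]
since $\|\kappa(x,\cdot)\|_\mathcal{H}^2 = \kappa(x,x) = 1$. For the $L^2$ lower bound, observe that the integrand in \eqref{e:hform} satisfies $(1+(2\pi x)^2)^{(d+1)/2} \ge 1$, so Plancherel's identity gives $\|f\|_\mathcal{H}^2 \ge c_0^{-1}\|\hat f\|_2^2 = c_0^{-1}\|f\|_2^2$, and $c_1 := c_0^{-1/2}$ works.

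For the small-$c$ statement, I would pick $\delta' > 0$ with $(1+\delta')^2 \le 1+\delta$ and take
\[
\tilde f(x) := (1+\delta')\,\kappa(0,x) = (1+\delta')\,e^{-|x|}.
\]
Since $\kappa(0,\cdot)$ is one of the generators of $\mathcal{H}$ with $\|\kappa(0,\cdot)\|_\mathcal{H}^2 = \kappa(0,0) = 1$ by the definition \eqref{e:rkhsip} of the inner product, this gives $\|\tilde f\|_\mathcal{H}^2 = (1+\delta')^2 \le 1+\delta$. On $[0,c]^d$ one has $|x| \le c\sqrt d$, hence $\tilde f(x) \ge (1+\delta')e^{-c\sqrt d} \ge 1$ as soon as $c \le (\log(1+\delta'))/\sqrt d$.

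For the large-$c$ statement, I would build $\bar f$ as a superposition of kernel peaks arranged on a unit lattice. Let $\Lambda_c := \mathbb{Z}^d \cap [0,c]^d$, so $|\Lambda_c| = O(c^d)$, and set
\[
\bar f(x) := e^{\sqrt d/2}\sum_{y \in \Lambda_c} \kappa(y,x).
\]
For any $x \in [0,c]^d$ the nearest lattice point $y^* \in \Lambda_c$ satisfies $|x - y^*| \le \sqrt d/2$, so $\bar f(x) \ge e^{\sqrt d/2}\kappa(y^*,x) \ge 1$. By bilinearity of the inner product and \eqref{e:rkhsip},
\[
\|\bar f\|_\mathcal{H}^2 = e^{\sqrt d}\sum_{y,y'\in\Lambda_c}\kappa(y,y') \le e^{\sqrt d}\,|\Lambda_c|\sum_{k\in\mathbb{Z}^d}e^{-|k|} = O(c^d),
\]
where I used translation invariance of the lattice (fixing $y$ and extending the inner sum over all of $\mathbb{Z}^d$) together with summability of $k \mapsto e^{-|k|}$.

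None of the three pieces presents any substantive analytic difficulty: each is essentially a one-step consequence of the RKHS definitions once the correct test function is written down. The only point requiring care is the cross-term sum in Part~3, whose uniform-in-$c$ control rests entirely on the exponential decay of $\kappa$. Note that with this construction the norm of $\bar f$ scales like the volume $c^d$ of the box, which is exactly the scaling needed to feed into the variational problem $(VP\!:\!G)$: substituting $\bar f/c^{d/2}$ as a test pair yields $V_G^d(c) \ge -d\log c + O(1)$, matching the asymptotic stated in Proposition~\ref{p:vprop}.
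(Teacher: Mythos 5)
Your first two parts coincide with the paper's proof: the sup-norm bound via the reproducing property \eqref{e:repro} and Cauchy--Schwarz, the $L^2$ bound via the spectral representation \eqref{e:hform} and the boundedness of $\rho$, and a single rescaled kernel $\kappa(0,\cdot)$ for the small-$c$ statement.

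For the third part you take a genuinely different route (a lattice superposition of kernel translates) and obtain $\|\bar f\|_\mathcal{H}^2 = O(c^{d})$, whose exponent has the \emph{opposite} sign to the displayed bound $\|\bar f\|_\mathcal{H}^2 \le c_2\, c^{-d}$ in the lemma. You are right and the lemma's exponent is a typo: the bound $\|\bar f\|_\mathcal{H}^2 \le c_2\, c^{-d}$ is outright impossible, since $\bar f \ge \id_{[0,c]^d}$ together with the lemma's own first inequality gives $\|\bar f\|_\mathcal{H}\ge\|\bar f\|_\infty\ge 1$, which contradicts $c_2\, c^{-d}<1$ once $c^d>c_2$. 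The intended bound is $\|\bar f\|_\mathcal{H}^2 \le c_2\, c^{d}$, and this is exactly what makes the lower bound in the proof of Proposition~\ref{p:propvghigh} close -- your own sanity check with $k=c^{-d/2}$ gives $2\log k - \tfrac12\|k\bar f\|_\mathcal{H}^2 \ge -d\log c + O(1)$. Incidentally, the paper's construction (taking $\bar f$ to be the Fourier transform of $\id_{[-c_2/c,\,c_2/c]^d}$) does not prove the pointwise bound as written either: that transform has $\bar f(0) = (2c_2/c)^d\to 0$ and changes sign, so $\bar f \ge \id_{[0,c]^d}$ fails without a rescaling by $\Theta(c^{d})$ and a positivity fix such as replacing $\id$ by $\id*\id$; after those corrections the RKHS norm squared is again $\Theta(c^{d})$, in agreement with you. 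Your lattice-sum construction avoids all of this: $\bar f$ is a sum of non-negative terms, the pointwise lower bound follows from the nearest lattice point, and the norm bound is a single absolutely convergent double sum controlled by the exponential decay of $\kappa$. Your proof is therefore correct, and on the third part it is both cleaner and more careful than the paper's.
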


\begin{proof}
By the reproducing property of the kernel \eqref{e:repro} and the Cauchy-Schwarz inequality
\[  f(x) = \langle f(\cdot), \kappa(x, \cdot) \rangle_{\mathcal{H}} \le \|\kappa(0, \cdot)\|_\mathcal{H}  \| f \|_\mathcal{H} = \| f \|_\mathcal{H} , \]
 where we used the fact that $\| \kappa(0, \cdot) \|_\mathcal{H} = 1$ (as can be seen from \eqref{e:rkhsip}). Moreover, since the spectral density $\rho$ is bounded above, by \eqref{e:hform} there is a $c_1 > 0$ such that
\[  \| f \|^2_\mathcal{H}  = \| \hat{f} / \sqrt{\rho} \|^2_2 \ge  c_1 \| \hat{f} \|^2_2    = c_1 \|f\|^2_2. \]

  \smallskip
For the second statement, let $\tilde f(s) = \sqrt{1+\delta} \kappa(0, s) = \sqrt{1 + \delta} e^{-|s|}$, which satisfies
  \[ \| \tilde f \|_\mathcal{H}  = \sqrt{1+\delta} \| \kappa(0, \cdot) \|_\mathcal{H}  = \sqrt{1 + \delta}, \]
  where again we used the fact that $\| \kappa(0, \cdot) \|_\mathcal{H} = 1$ by \eqref{e:rkhsip}. Setting $c > 0$ sufficiently small such that
  \[    e^{-2c} > 1/\sqrt{1+\delta} \]
  we have in addition that $\tilde f \ge \id_{[0,c]^d}$, as required.
  
  \smallskip
For the final statement, let $c_0 > 0$ be sufficiently large and $c_1 > 0$ be sufficiently small so that the Fourier transform of $c_0 \id_{[-c_1, c_1]^d}$ is larger than $1$ on $[0,1]^d$ (since the Fourier transform of a rectangular function is a product of sinc functions, such a choice is possible). Then, for any $c > 1$, let $\bar f$ be the Fourier transform of $c_0 c^d \id_{[-c_1/c, c_1/c]^d}$. By the scaling property of the Fourier transform, $\bar f|_{[0,c]^d} \ge 1$. Moreover, there is a $\delta > 0$ such that the spectral density $\rho$ on $\id_{[-c_1/c, c_1/c]^d}$ is bounded below by $\delta$. Hence, by \eqref{e:hform},
\begin{equation*}
  \| \bar f \|^2_\mathcal{H}  \le  \delta^{-1} c_0^2 c^{2d} \| \id_{[-c_1/c, c_1/c]^d} \|^2_2  \le  c_2 c^d .  \qedhere 
  \end{equation*}
\end{proof}

\begin{proof}[Proof of Proposition \ref{p:propvghigh}]

Recall the description of $V_G^d$ as the solution of the variational problem $(VP \!:\!G)$. We begin by establishing the upper bounds for the asymptotic statements. Since $\|f\|_\mathcal{H} \ge \|f\|_\infty$ by Lemma \ref{l:boundhigh}, we have
\[ V_G^d(c) \le  \sup_{f_1, f_2 \in C(\mathbb{R}^d)}   \log( \|f_1\|^2_\infty + \|f_2\|^2_\infty  )    -  \frac{1}{2} ( \|f_1\|^2_\infty + \|f_2\|^2_\infty ) \]
which is equal to
\[  \sup_{k_1, k_2 > 0}   \, \log ( k_1^2 + k_2^2)   - \frac{ 1}{2}  (k_1^2 + k_2^2) =  \sup_{k > 0}   \, \log k  - \frac{k}{2}     . \]
Optimising over $k > 0$, the maximum occurs at  $k = 2$ which yields the upper bound
\[    V_G^d \le  \log 2 - 1 . \]
Similarly, since $\| f \|_\mathcal{H} \ge c_1 \|f\|_2$, by  Lemma \ref{l:boundhigh}, we have
\[ V_G^d(c) \le  \sup_{f_1, f_2 \in C(\mathbb{R}^d)}    \frac{1}{c^d}  \int_{s \in [0, c]^d} \log( f_1(s)^2 + f_2(s)^2  )  \, ds -  \frac{c_1^2}{2}  \int_{s \in \mathbb{R}^d}  f_1(s)^2 + f_2(s)^2 \, ds  \]
which is equal to
\[    \sup_{k_1, k_2 > 0}   \, \log (k_1^2 + k_2^2)  - \frac{c_1^2 c^d}{2} ( k_1^2 + k_2^2)   = \sup_{k > 0}   \, \log k  - \frac{c_1^2 c^d}{2}  k    . \]
Optimising over $k > 0$, the maximum occurs at  $k = 2 c^{-d} / c_1^2$ which yields the upper bound
\[    V_G^d \le - d \log c + O(1) . \]

\smallskip
We turn now to the lower bounds. For the first statement, fix $\delta > 0$ and set $\tilde f$ as in Lemma~\ref{l:boundhigh}. Fix $k > 0$ and set $f_1 = k \tilde f$ and $f_2 = 0$. Then, by its definition as a supremum over all $f_1$ and~$f_2$, $V_G^d(c)$ is bounded below by 
\[    \int_{s \in [0, c]^d} \frac{1}{c^d} \log( k^2 \tilde f^2   )  \, ds -  \frac{1}{2} \| k \tilde f\|^2_\mathcal{H}   .\]
Since, for sufficiently small $c > 0$,
 \[   k \tilde f|_{[0,c]^d}  \ge k \quad \text{and} \quad \| k \tilde f\|^2_\mathcal{H} \le  k^2 (1+\delta)    ,\]
 the above is at least, for sufficiently small $c > 0$,
\[  2  \log k -  \frac{1+\delta}{2} k^2   .\]
Setting $k^2 = 2/(1+\delta)$, and using the inequality $\log(1+\delta) \le \delta$, yields the lower bound 
\[ V_G^d \ge  \log 2 - 1 -\log(1+\delta)   \ge \log 2 - 1 - \delta . \]
Since $\delta > 0$ was arbitrary, we have that $V_G^d \ge \log  2 - 1 +o(1)$ as $c \to 0$.

\smallskip
For the second statement, fix $c_2 > 0$ as in Lemma \ref{l:boundhigh}, and for each $c > 2$, let $\bar f$ be the function guaranteed to exist by the same lemma. Fix $k > 0$ and set $f_1 = k \bar f$ and $f_2 = 0$. Then, arguing as above, $V_G^d(c)$ is bounded below by 
\[    \int_{s \in [0, c]^d} \frac{1}{c^d} \log( k^2 \bar f^2   )  \, ds -  \frac{1}{2} \| k \bar f\|^2_\mathcal{H}   .\]
Since 
 \[   k \bar f|_{[0,c]^d} \ge 1   \quad \text{and} \quad \| k \bar f\|^2_\mathcal{H} \le c_2 k^2 c^d   ,\]
 the above is at least
\[  2  \log k -  \frac{c_2}{2} k^2 c^d   .\]
Setting $k = c^{-d/2}$ yields the lower bound $V_G^d \ge - d \log c + O(1)$.
\end{proof}

\begin{proof}[Proof of Corollary \ref{c:propvghigh}]
The convexity of $V_G^d$ (by Lemma~\ref{l:prop2}) implies that $V_G^d$ may only fail to be strictly decreasing if it is constant on $[c, \infty]$ for some $c > 0$. However, this contradicts the fact that $V_G^d(c) \to - \infty$ as $c \to \infty$, as shown in Proposition \ref{p:propvghigh}.
\end{proof}

We turn now to the one-dimensional case in which a more detailed analysis can be carried out. In particular, we have the following ODE representation:

\begin{proposition}
\label{p:ode}
For each $c> 0$, the function $V_G^1$ has the representation
    \begin{equation}
    \label{e:eqforv}
     V_G^1 =  \log 2 -  h_c(0)^2 + \int_{s \in [0, c]}  \frac{2}{c}  \log h_c(s) - \frac{1}{2} \left( h_c(s) +  \dot{h}_c(s)  \right)^2  \, ds  , 
     \end{equation}
     where $h_c$ is the positive analytic function on $[0,c]$ that is the unique solution to the ODE in~\eqref{e:ode}.
     \end{proposition}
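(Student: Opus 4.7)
The plan is to first reduce the two-function problem $(VP\!:\!G)$ in dimension one to a one-function problem via a polar decomposition, and then apply the classical Euler-Lagrange method. The key input that makes the calculus of variations tractable is the explicit local representation \eqref{e:oun2} of the RKHS norm $\|\cdot\|_{\mathcal{H}_c}$, which reflects the Markov property of the Ornstein-Uhlenbeck process.

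For the reduction, I would write $(f_1,f_2)=(h\cos\theta, h\sin\theta)$ on the set where $f_1^2+f_2^2>0$, with $h>0$ and $\theta$ locally smooth. A direct computation using \eqref{e:oun2} gives $(f_1+\dot f_1)^2+(f_2+\dot f_2)^2=(h+\dot h)^2+h^2\dot\theta^2$ and $f_1(0)^2+f_2(0)^2=h(0)^2$, so that
\[
 J_c[f_1,f_2]-I_c[f_1]-I_c[f_2]
 = \frac{2}{c}\int_0^c \log h\, ds - \frac{1}{2}h(0)^2 - \frac{1}{4}\int_0^c\bigl[(h+\dot h)^2+h^2\dot\theta^2\bigr]ds.
\]
Since $\theta$ enters only as the nonnegative penalty $\tfrac{1}{4}\int h^2\dot\theta^2\,ds$, the supremum is attained at $\theta$ constant. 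Existence of a maximizer $(f_1^*,f_2^*)$ from Theorem~\ref{t:vl}, combined with the fact that $J_c=-\infty$ whenever $f_1^2+f_2^2$ vanishes, forces $h=\sqrt{(f_1^*)^2+(f_2^*)^2}$ to be strictly positive (and hence bounded below, by continuity on a compact set) on all of $[0,c]$. After the rescaling $h\mapsto\sqrt{2}\,h$ the problem becomes
\[
V_G^1(c)=\log 2+\sup_{h>0}\tilde G[h],\qquad
\tilde G[h]:=\frac{2}{c}\int_0^c\log h\,ds-h(0)^2-\frac{1}{2}\int_0^c(h+\dot h)^2\,ds.
\]

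Next, I would compute the first variation of $\tilde G$. Integrating $(h+\dot h)\,\delta\dot h$ by parts yields
\[
 \delta\tilde G = \int_0^c\Bigl[\ddot h - h + \tfrac{2}{ch}\Bigr]\delta h\,ds + [\dot h(0)-h(0)]\,\delta h(0) - [h(c)+\dot h(c)]\,\delta h(c).
\]
Since $\delta h$ is unconstrained at both endpoints, stationarity forces the interior equation $\ddot h = h - 2/(ch)$ together with the natural boundary conditions $\dot h(0)=h(0)$ and $\dot h(c)=-h(c)$, which is exactly the ODE~\eqref{e:ode}.

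To close the argument, note that $\tilde G$ is strictly concave on positive functions (the log term is strictly concave pointwise in $h$, while the remainder is a concave quadratic form in $h$), so any critical point is the unique global maximizer $h_c$; standard ODE regularity theory then shows that $h_c$ is analytic on $[0,c]$. Substituting $h_c$ into $\log 2+\tilde G[h_c]$ recovers the stated expression \eqref{e:eqforv}. The main obstacles are to make the polar decomposition rigorous when $f_1,f_2$ only belong to $\mathcal{H}_c$ (resolved by a density argument using smooth representatives, together with lower semicontinuity) and to verify strict positivity of the maximizer so that the log term and its first variation are well-defined; both rest on the local character of $\|\cdot\|_{\mathcal{H}_c}$ and on the strict concavity of $\tilde G$.
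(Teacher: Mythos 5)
Your proposal follows essentially the same route as the paper: polar decomposition using the local form \eqref{e:oun2} of the RKHS norm, the observation that $\dot\theta$ enters only as a non-negative penalty so $\theta$ must be constant at the maximum, rescaling $h \mapsto \sqrt{2}\,h$ to absorb the $\log 2$, and then Euler--Lagrange with natural boundary conditions to arrive at \eqref{e:ode}. The first-variation computation, the rescaling, and the final substitution are all correct, and the reduction to the radial problem matches the paper's \eqref{e:polar}.

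One step, however, does not hold as stated: the claim that ``$J_c=-\infty$ whenever $f_1^2+f_2^2$ vanishes.'' Since $J_c[f_1,f_2]=c^{-1}\int_0^c\log(f_1^2+f_2^2)\,ds$, an \emph{isolated} zero of $f_1^2+f_2^2$ (e.g.\ $f_1(s)=|s-c/2|$, $f_2=0$, both perfectly admissible in $\mathcal{H}_c$) gives only an integrable logarithmic singularity, so $J_c$ stays finite. Hence you cannot conclude strict positivity of the maximizer from finiteness of $J_c$. The conclusion is still correct, but it requires a different argument: if $h^*(s_0)=0$, perturb $h^*$ by a small non-negative bump $\epsilon\phi$ supported near $s_0$ with $\phi(s_0)>0$. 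The entropy cost $I_c$ changes by $O(\epsilon)$, while the first variation of $J_c$ is $\frac{2}{c}\int\phi/h^*\,ds=+\infty$, so the perturbation strictly increases the functional — contradicting maximality. With this fix, positivity holds and the rest of your argument (boundedness below on the compact interval, and hence validity of the Euler--Lagrange analysis) goes through. The paper sidesteps this by restricting at the outset to positive absolutely continuous $r$ and invoking standard existence/regularity results from the calculus of variations; your concavity argument for uniqueness of the maximizer is a clean alternative to the paper's monotonicity proof of uniqueness of the ODE solution, and both ultimately identify the same $h_c$.
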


\begin{proof}
By the definition of $J_c$ and the representation of $I_c$ in \eqref{e:oun2}, $V_G^1(c)$ can be written as 
\begin{align*}
 & \sup_{f_1, f_2 \in \mathcal{H}_c}   \bigg\{ -  \frac{1}{4} ( f_1(0)^2 + f_1(c)^2 + f_2(0)^2 + f_2(c)^2 )   \\
  & \qquad  \qquad +  \int_{s \in [0,c]}  \frac{1}{c} \log ( f_1(s)^2 + f_2(s)^2 )- \frac{1}{4} (  f_1(s)^2 +  \dot{f}_1(s)^2 + f_2(s)^2 +  \dot{f}_2(s)^2 )   \, ds \bigg\}   ,
  \end{align*}
  where $\mathcal{H}_c$ is the space of absolutely continuous functions on $[0, c]$ with derivative in $L^2[0, c]$. Switching to polar coordinates 
  \[ (f_1(s), f_2(s)) = \big(r(s) \cos \theta(s), r(s) \sin \theta(s) \big) , \]
  we see that the expression to be maximised depends on $\theta(s)$ only through
  \[  -  ( \dot{f}_1(s)^2 +  \dot{f}_2(s)^2)  = - r(s)^2 \dot{\theta}(s)^2 - \dot{r}(s)^2  ,  \]
  which is a decreasing function of $\dot{\theta}$. Hence any maximiser must have $\theta$ constant, so it remains to analyse the one-dimensional variational problem
    \begin{equation}
    \label{e:polar}
       \sup_{ r \in \mathcal{H}_c ,  r > 0  }  - \frac{r(0)^2 + r(c)^2}{4} + \int_{s \in [0, c]} \frac{2}{c}  \log r(s)  - \frac{1}{4} \left( r(s)^2 + \dot{r}(s)^2  \right) \, ds  . 
       \end{equation}
  Fixing for a moment $r(0)$ and $r(c)$, and widening the class of admissible $r$ to all positive absolutely continuous functions on $[0, c]$, the above problem is in the standard setting of the calculus of variations (see, e.g., \cite{dac}) since the Lagrangian
  \[ L( r, \dot{r} ) = - \frac{2}{c} \log  r  + \frac{1}{4} ( r^2 + \dot{r}^2 )     \]
  is smooth, strictly convex with non-degenerate second derivatives, and is coercive.\footnote{I.e.\ there exist $\alpha_1, \alpha_2 > 0$, $\alpha_3 \in \mathbb{R}$ and $p_2 > p_1 \ge1$ such that $L( u, \dot{u} )\ge \alpha_1 |u|^{p_1} +  \alpha_2 |\dot{u}|^{p_2} - \alpha_3$.} The standard theory (see, e.g., \cite[Theorems 4.1 and 4.36]{dac}) yields the existence of a smooth and positive $r$ that maximises the variational problem and satisfies the Euler-Lagrange equation; being smooth, this solution also maximises the variational problem in the space~$\mathcal{H}_c$.
  
  \smallskip
The Euler-Lagrange equation for $r$ reads
    \[    \frac{\partial}{\partial r} \left(  \frac{2}{c} \log r(s) - \frac{1}{4} r(s)^2  \right)    = \frac{d}{d s} \left(   \frac{\partial}{\partial \dot{r}} \left( - \frac{1}{4} \dot{r}(s)^2 \right) \right)   \]
with boundary conditions (which arise from the contribution of the endpoints $ - \frac14  (r(0)^2 + r(c)^2)$)    
   \[   \frac{\partial}{dr} \left( -\frac{r(s)^2}{4} \right) \bigg|_{s = 0} =  \frac{\partial}{d \dot{r}} \left( -\frac{\dot{r}(s)^2}{4} \right) \bigg|_{s = 0}  \quad \text{and} \quad  \frac{\partial}{dr} \left( -\frac{r(s)^2}{4} \right) \bigg|_{s = c} = -  \frac{\partial}{d \dot{r}} \left( -\frac{\dot{r}(s)^2}{4} \right) \bigg|_{s = c}  . \]
        Simplifying, one sees that $r$ satisfies the boundary value ODE
      \begin{equation}
      \label{e:ode3}
        \ddot{r} = - \frac{4}{c r}  + r \ ,  \quad \dot{r}(0) = r(0) \ , \ \dot{r}(c) = -r(c) .
        \end{equation}
      Assuming for a moment that the solution to this ODE is unique, $r$ must be symmetric under $x \mapsto c-x$, since $r(x-c)$ also satisfies this ODE. Setting $y = r / \sqrt{2}$ then recovers the ODE in \eqref{e:ode}, and substituting into \eqref{e:polar} establishes \eqref{e:eqforv}.
    
        \smallskip     
    It remains to show that \eqref{e:ode3} (and hence also \eqref{e:ode}) has a unique solution that is analytic. To show uniqueness, consider that $\ddot{r}$ is strictly increasing in $r$, and since also $\dot{r}(0)$ is increasing in $r(0)$, each of $r, \dot{r}$ and $\ddot{r}$ are strictly increasing in~$r(0)$. Hence the quantity
     \[  Z(r(0)) =  \left( \dot{r}(c) + r(c) \right)  - \left( \dot{r}(0) - r(0) \right)  = r(0) + r(c) + \int_{0}^{c} \ddot{r}(s) \, ds \]
     is strictly increasing in $r(0)$, and since the solution satisfies $Z(r(0)) = 0$, it is unique. Analyticity follows from the Cauchy--Kowalevski theorem.
        \end{proof}
         
  We can exploit the ODE representation in Proposition \ref{p:ode} to show that $V_G^1$ is a smooth function of $c$:
  
  \begin{proposition}
 \label{p:smooth}        
Both $h_c$ and $\dot{h}_c$ vary smoothly with $c$, and hence $V_G^1(c)$ is smooth.
\end{proposition}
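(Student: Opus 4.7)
The plan is to reformulate the boundary value problem on the $c$-dependent domain $[0,c]$ as a BVP on the fixed domain $[0,1]$ with $c$ entering only as a smooth parameter, and then appeal to the implicit function theorem in the Banach space $C^2([0,1])$.

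First I would rescale by setting $w(t) := h_c(ct)$ for $t \in [0,1]$, so that $\dot{w}(t) = c\, \dot{h}_c(ct)$ and $\ddot{w}(t) = c^2 \ddot{h}_c(ct)$. The ODE \eqref{e:ode} and its boundary conditions then become
\[
\ddot{w}(t) = -\frac{2c}{w(t)} + c^2 w(t), \qquad \dot{w}(0) = c\, w(0), \qquad \dot{w}(1) = -c\, w(1),
\]
a BVP on the fixed interval $[0,1]$ depending smoothly on the parameter $c$. Define the map
\[
F : \{ w \in C^2([0,1]) : w > 0 \} \times (0,\infty) \longrightarrow C^0([0,1]) \times \mathbb{R}^2
\]
by
\[
F(w, c) := \bigl(\ddot{w} + 2c/w - c^2 w,\; \dot{w}(0) - c\, w(0),\; \dot{w}(1) + c\, w(1)\bigr).
\]
The map $F$ is smooth (indeed real-analytic) on its domain, and by construction $F(w_c, c) = 0$, where $w_c(t) = h_c(ct)$.

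Next I would show that the Fréchet derivative $D_w F(w_c, c)$ is a Banach space isomorphism from $C^2([0,1])$ onto $C^0([0,1]) \times \mathbb{R}^2$. This derivative sends $\eta \in C^2([0,1])$ to
\[
\bigl(\ddot{\eta} - q(t)\, \eta,\; \dot{\eta}(0) - c\, \eta(0),\; \dot{\eta}(1) + c\, \eta(1)\bigr), \quad q(t) := \frac{2c}{w_c(t)^2} + c^2 > 0.
\]
Since this is a linear Fredholm operator of index zero (being a compact perturbation of the standard boundary-value operator for $\ddot{\eta}$), it is an isomorphism if and only if it is injective. Injectivity follows from a variant of the monotonicity argument already used in the proof of Proposition~\ref{p:ode}: suppose $\eta \not\equiv 0$ lies in the kernel. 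If $\eta(0) > 0$ then $\dot{\eta}(0) = c\eta(0) > 0$, and since $q > 0$ the equation $\ddot{\eta} = q\eta$ forces $\eta$ and $\dot{\eta}$ to remain strictly positive throughout $[0,1]$; but then $\dot{\eta}(1) + c\,\eta(1) > 0$, contradicting the right endpoint condition. The case $\eta(0) < 0$ is symmetric, and the case $\eta(0) = 0$ combined with $\dot{\eta}(0) = 0$ forces $\eta \equiv 0$ by uniqueness for linear ODE initial value problems. Thus $D_w F(w_c, c)$ is an isomorphism.

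With invertibility in hand, the implicit function theorem yields a smooth map $c \mapsto w_c \in C^2([0,1])$ in an open neighbourhood of each $c > 0$, hence globally. Undoing the rescaling, the map $c \mapsto (h_c, \dot{h}_c) \in C^1([0,c])^2$ varies smoothly with $c$ (once we identify functions on $[0,c]$ with functions on $[0,1]$ via the rescaling, or equivalently evaluate on any compact subinterval fixed in advance). Finally, since the representation \eqref{e:eqforv} expresses $V_G^1(c)$ as a composition of the smooth family $c \mapsto (w_c, c)$ with the smooth operations of evaluation at the endpoints and integration in $t$ over $[0,1]$ (after the change of variables $s = ct$, which introduces only smooth factors of $c$), it follows that $V_G^1$ is a smooth function of $c$.

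The main obstacle is establishing invertibility of the linearization, but this reduces to the injectivity argument sketched above, which is essentially the same monotonicity/maximum-principle reasoning already used to prove uniqueness of the solution to \eqref{e:ode}.
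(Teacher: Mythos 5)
Your proof is correct, and it takes a genuinely different route from the paper's. The paper works with the first integral of the autonomous ODE, parametrizes the solution by the two endpoint values $(a_1,a_2) = (h_c(0), h_c(c/2))$, and expresses these as the (unique) root of a $2\times 2$ smooth nonlinear system $G_1 = G_2 = 0$ depending on $c$; it then asserts that smoothness of the $G_i$ plus uniqueness of the root give smooth dependence on $c$. You instead rescale to the fixed domain $[0,1]$, set up the BVP as a single nonlinear map $F$ on the Banach space $C^2([0,1])$ with $c$ as a smooth parameter, and apply the implicit function theorem after showing that the linearization is invertible.

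What each approach buys: the paper's reduction to two real parameters via the first integral is compact and exploits the structure (symmetry, concavity) of $h_c$, but the smoothness step is stated rather tersely — uniqueness alone is not enough for the implicit function theorem, and the Jacobian of $(G_1,G_2)$ with respect to $(a_1,a_2)$ is not explicitly checked to be nondegenerate (indeed, because the integrand in $G_2$ has an integrable singularity at $y = a_2$, that verification requires some care). Your Banach-space formulation fills in exactly this kind of gap: you reduce invertibility of $D_w F$ to injectivity via a Fredholm index zero observation, and then prove injectivity by a maximum-principle/monotonicity argument that mirrors the uniqueness argument already given in the proof of Proposition~\ref{p:ode}. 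That injectivity step is the technical heart, and your case analysis on the sign of $\eta(0)$ is sound: if $\eta(0) > 0$ then $\dot\eta(0) = c\eta(0) > 0$ and $\ddot\eta = q\eta > 0$ propagate positivity, forcing $\dot\eta(1) + c\eta(1) > 0$, while $\eta(0)=0$ forces $\dot\eta(0)=0$ and hence $\eta\equiv 0$. The last paragraph (undoing the rescaling and feeding into \eqref{e:eqforv}) is a routine change of variables $s = ct$, and your parenthetical acknowledgment that one must be careful about the $c$-dependent domain is exactly the right caveat; writing the formula out in the $t$-variable makes the smoothness in $c$ transparent. All told, your argument is a more explicit and self-contained alternative to the paper's, at the cost of working in an infinite-dimensional space rather than reducing to two scalar unknowns.
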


\begin{proof}
Multiplying \eqref{e:ode} with $\dot{y}$ and integrating from zero gives
     \begin{equation}
     \label{e:ode2}
          \dot{y}^2 = y^2 - \frac{4}{c} \log(y / y(0) )  \ , \quad y(0) = y(c) .
          \end{equation}
    To analyse \eqref{e:ode2}, recall that we previously argued in the proof of Proposition \ref{p:ode} that the unique solution $h_c$ is symmetric under $x \mapsto c-x$, and also that $\ddot{h}_c$ is increasing in $h_c$, which implies that $h_c$ is concave since necessarily $\ddot{h}_c \le 0$ at the global maximum. Hence $h_c$ can be represented implicitly on $x \in [0, c/2]$.
    
  \smallskip
  Next, we introduce the parameters $(a_1, a_2) = (h_c(0), h_c(c/2))$ which, by solving \eqref{e:ode2}, determine the implicit function representation for $h_c$ via
     \[      \int_{a_1}^{h_c(x) } \frac{dy}{\sqrt{y^2 - \frac{4}{c} \log(y / a_1 ) } }  = \begin{cases}  x ,  & x \in [0, c/2]   , \\ c-x,& x \in [c/2,c] .\end{cases} \]
  From \eqref{e:ode2}, $(a_1, a_2)$ is the unique solution in $\mathbb{R}_+^2$ of the smooth non-linear system
     \[   G_1(a_1, a_2; c) = 0 \quad \text{and} \quad G_2(a_1, a_2; c) = 0  \]
    where
    \[ G_1(a_1, a_2 ; c) = a_2^2 - \frac{4}{c} \log(a_2/a_1) \quad \text{and} \quad G_2(a_1, a_2;c) = -\frac{c}{2} + \int_{a_1}^{a_2} \frac{dy}{\sqrt{y^2 - \frac{4}{c} \log(y / a_1 ) } }  . \]
 Finally, since $G_1$ and $G_2$ vary smoothly with $c$, uniqueness implies that the parameters $(a_1, a_2)$ do also, and hence so does the solution $h_c$. Given \eqref{e:ode2}, we see that $\dot{h_c}$ also varies smoothly with $c$, and hence so does $V_G^1$.
         \end{proof}

We can now complete the proof of Proposition \ref{p:vprop}:

\begin{proof}[Proof of Proposition \ref{p:vprop}]
In light of Lemma~\ref{l:prop2} and Propositions \ref{p:propvghigh},~\ref{p:ode} and \ref{p:smooth} and Corollary \ref{c:propvghigh}, it remains only to show that
 \[   V_G^1(c)   = \log 2 - 1 -c/3 + O(c^2) \quad \text{as } c \to 0 . \]
  
\smallskip
Let $h$ be the unique solution to the ODE \eqref{e:ode} (dropping the explicit dependence on $c$ for simplicity), and recall that $h\ge 0$ and, from the proof of Proposition~\ref{p:ode}, that $h$ is symmetric under $x \mapsto c-x$. We begin with an \textit{a priori} estimate on $h(0)$. As in the proof of Proposition~\ref{p:smooth}, $h$ is concave, and so $|\dot{h}| \le \dot{h}(0) = h(0)$. We deduce that $h \in [h(0), h(0) + c h(0)]$, which implies the following bounds on $\ddot{h}$:
 \begin{equation}
 \label{e:boundhdd}
   -\frac{2}{c h(0) } + h(0) \le \ddot{h} \le - \frac{2}{ c h(0)(1 + c) } + h(0)(1 + c) .  
   \end{equation}
Given the boundary conditions in \eqref{e:ode},
 \[ \int_{s \in [0, c]} \ddot{h}(s) = \dot{h}(c) - \dot{h}(0) = -2 h(0) ,\] 
 and so \eqref{e:boundhdd} yields
 \[      -\frac{2}{c h(0) } + h(0) \le -\frac{2}{c} h(0) \le - \frac{2}{ c h(0)(1 + c) } + h(0)(1 + c) .  \]
 Simplifying, this demonstrates that 
 \[   \frac{1}{(1 + c)(1 + c(1 + c) )  }  \le   h(0)^2 \le  \frac{1}{1 + c}  \] 
 and in particular
  \begin{equation}
 \label{e:boundh}
 h(0) = 1 + O(c)  \ , \quad \text{as } c \to 0 .
 \end{equation}

 By \eqref{e:ode} and \eqref{e:boundh} we have that, as $c \to 0$,
 \[ \ddot{h}(0) = -2 h(0) /c + O(1)  . \]
  By differentiating \eqref{e:ode} and since $|\dot{h}| \le h(0)$, one can also deduce from \eqref{e:boundhdd} and \eqref{e:boundh} that, for sufficiently small $c > 0$, $|\dddot{h}| \le 3/c$. Since $h$ is analytic, this implies the following expansions for $h$ and $\dot{h}$ that are uniform on $s\in [0, c]$:
  \begin{align*}
     h(s) & = h(0) + s \dot{h}(0) + \frac{s^2}{2} \ddot{h}(0) + O(c^2)  =  h(0) \left( 1 +  s -\frac{s^2}{c} \right) +  O(c^2)  ,  \\
     \dot{h}(s) & = \dot{h}(0) + O(c) .
     \end{align*}
 Putting this into \eqref{e:eqforv}, and using a Taylor expansion of the logarithm,
 \begin{align*}
  V_G^1(c) & = \log 2  + 2 \log h(0)  -   h(0)^2  \\
 & \quad  +  \int_{s \in [0, c]}  \frac{2}{c} \log \left( 1 + s  -\frac{s^2}{c}    + O(c^2)  \right)  - \frac{1}{2} h(0)^2 \left(   2   -\frac{2s}{c}  + O(c)      \right)^2  \, ds  \\
  &   = \log 2  + 2 \log h(0)  -   h(0)^2  \\
 & \quad +  \int_{s \in [0, c]}  \frac{2s}{c} - \frac{2 s^2}{c^2} - 2 + \frac{4s}{c} - \frac{2 s^2}{c^2}   + O(c)  \, ds  \\
 & = \log 2  + 2 \log h(0)  -   h(0)^2    + \Big(3  - \frac{4}{3} - 2   \Big) c + O(c^2) .
 \end{align*}
 Since $h(0) = 1 + t_c$ for some $t_c = O(c)$, and again using a Taylor expansion of the logarithm,
 \begin{equation*}
 V_G^1(c) = \log 2  - 1 + 2 t_c - 2 t_c +  \Big(3  - \frac{4}{3} - 2   \Big) c + O(c^2)   =  \log 2 - c/3 + O(c^2)  . \qedhere
 \end{equation*}
\end{proof}


\medskip

\section{The subcritical regime in arbitrary dimension}
\label{s:subhigh}

In this section we use elementary Gaussian estimates to study the subcritical regime in arbitrary dimension $d \ge 1$.

 \begin{proposition}
 \label{p:high}
Let $1/N \ll \beta \ll 1$. Then
 \[ \frac{1}{N^d} \log  E^N_\beta  =   \log(1/\beta^d) + O(1)  .  \]
 \end{proposition}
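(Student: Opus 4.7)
By Proposition~\ref{p:id} we have $E^N_\beta = 2^{N^d} \operatorname{perm}(A^N_\beta) = 2^{N^d} Z^N_\beta$, so the statement reduces to matching upper and lower bounds of the form $\tfrac{1}{N^d}\log Z^N_\beta = -d\log\beta + O(1)$. The plan is to bypass the Gaussian representation in favour of two elementary estimates on the permanent: a volumetric upper bound via row sums, and a lower bound obtained by exhibiting many low-energy permutations.

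\emph{Upper bound.} The first step is the elementary inequality: for any nonnegative $n\times n$ matrix $A$ with row sums $R_i = \sum_j A_{ij}$, one has $\operatorname{perm}(A) \le \prod_i R_i$. This is proved by induction on $n$, expanding $\operatorname{perm}(A) = \sum_j A_{1j} \operatorname{perm}(A^{(1,j)})$ and noting that each minor $A^{(1,j)}$ has row sums $R_i - A_{ij} \le R_i$. Applied to $A^N_\beta$, the row sums satisfy
\[ R_x = \sum_{y \in [\![1,N]\!]^d} e^{-\beta|x-y|} \le \sum_{z \in \mathbb{Z}^d} e^{-\beta|z|} \le e^{\beta\sqrt{d}} \int_{\mathbb{R}^d} e^{-\beta|u|}\,du = O(\beta^{-d}) \]
uniformly in $x$, where the last equality follows from the substitution $u = v/\beta$. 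Hence $\operatorname{perm}(A^N_\beta) \le (C_d/\beta^d)^{N^d}$, giving $\tfrac{1}{N^d}\log Z^N_\beta \le -d\log\beta + O(1)$.

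\emph{Lower bound.} The second step is to construct a large family of permutations with bounded per-site displacement. Fix a large constant $M > 0$, set $L = \lfloor M/\beta \rfloor$ and $m = \lfloor N/L \rfloor$; note that $L \le N$ for $N$ large since the regime $\beta \gg 1/N$ gives $\beta N \to \infty$. I partition the sub-box $[\![1, mL]\!]^d$ into $m^d$ disjoint cubes of side $L$ and extend any permutation of $[\![1,mL]\!]^d$ preserving this partition to a permutation of $[\![1,N]\!]^d$ by declaring it to be the identity on the leftover boundary layer. Each such $\pi$ satisfies $|x-\pi(x)| \le \sqrt{d}\,L$ pointwise, hence $\beta H(\pi) \le \sqrt{d}\,\beta L N^d \le \sqrt{d}\,M N^d$; their number is $(L^d!)^{m^d}$. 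Applying Stirling in the form $\log(L^d!) \ge L^d(d\log L - 1) - O(\log L)$, together with $(mL)^d = N^d(1+o(1))$ and $\log L = -\log\beta + O(1)$, yields
\[ \log Z^N_\beta \ge m^d L^d (d\log L - 1) - \sqrt{d}\,M N^d \ge N^d(-d\log\beta + O(1)). \]

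\emph{Main obstacle.} The argument is essentially direct; the only delicate point is absorbing the boundary layer contribution $N^d - (mL)^d = O(N^{d-1}L)$ into the $O(1)$ error, which requires $L/N = O(1/(\beta N)) \to 0$. This is precisely the content of the hypothesis $\beta \gg 1/N$, so the regime of Proposition~\ref{p:high} is exactly what the construction requires.
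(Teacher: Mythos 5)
Your upper bound is the same row-sum estimate the paper uses, and it is fine. For the lower bound you take a genuinely different, more elementary route: instead of the Gaussian-persistence argument the paper gives, you count low-energy permutations directly, which is an attractive simplification. However, there is a real gap in the way you handle the boundary layer. You discard the $N^d-(mL)^d = O(N^{d-1}L)$ sites not covered by the $L$-cubes by fixing them under $\pi$, and claim that $(mL)^d = N^d(1+o(1))$ is enough. It is not: the factor $(1+o(1))$ gets multiplied by $d\log L-1 \to\infty$, so the loss is of order
\[
\frac{N^{d-1}L \cdot \log L}{N^d} \;=\; O\!\left(\frac{\log(1/\beta)}{\beta N}\right),
\]
and the hypothesis $1/N\ll\beta$ only guarantees $1/(\beta N)\to 0$, not $\log(1/\beta)/(\beta N)=O(1)$. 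For instance $\beta=\sqrt{\log N}/N$ satisfies $1/N\ll\beta\ll1$ but gives $\log(1/\beta)/(\beta N)\sim\sqrt{\log N}\to\infty$, so your claimed chain $m^dL^d(d\log L-1)-\sqrt{d}MN^d\ge N^d(-d\log\beta+O(1))$ fails there.

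The fix is cheap and brings you closer to what the paper actually does: partition \emph{all} of $[\![1,N]\!]^d$ into rectangular blocks whose side lengths lie in $[L,2L)$ (absorb the remainder into the last block in each coordinate direction), so that every site belongs to some block. Then the blocks $B$ satisfy $\sum_B|B|=N^d$, $\log|B|=d\log L+O(1)$ uniformly, $\mathrm{diam}(B)\le 2\sqrt d\,L$, and the number of blocks is $O((N/L)^d)$. Summing Stirling over blocks gives $\sum_B\log(|B|!)=N^d(d\log L-1+O(1))+O((N/L)^d\log L)=N^d(-d\log\beta+O(1))$, and the energy bound becomes $\beta H(\pi)\le 2\sqrt d\,\beta L N^d\le 2\sqrt d\,M N^d$, so the lower bound goes through with no extra regime assumption. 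Note that the paper's own lower bound, while phrased through Reed's formula and a Gaussian persistence lemma, uses exactly this complete partition of $\mathcal T^N_\beta$ into blocks of bounded diameter (so $\sum_i|S_i|=N^d$), which is precisely why it avoids the boundary-layer loss.
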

 
Before we prove Proposition \ref{p:high} we state two auxiliary lemmas. The first is a simple bound on the permanent of a positive matrix, which can be directly verified from the definition. The second is a general `persistence' bound for positively-correlated Gaussian fields.
 
 \begin{lemma}
 \label{l:permbound}
Let $A$ be a positive $N \times N$ matrix. Then
\[ \text{perm}(A)  \le  \prod_i r_i  , \]
where $r_i$ denotes the row-sum of the $i^{\rm{th}}$ row of $A$.
\end{lemma}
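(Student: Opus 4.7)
The plan is to prove this by directly expanding the product of row sums and comparing term-by-term with the definition of the permanent. Writing $r_i = \sum_j A_{i,j}$ and expanding the product $\prod_i r_i = \prod_i \sum_j A_{i,j}$, I would distribute to obtain a sum indexed by all functions $f : [\![1,N]\!] \to [\![1,N]\!]$:
\[ \prod_i r_i \;=\; \sum_{f:[\![1,N]\!] \to [\![1,N]\!]} \prod_i A_{i,f(i)}. \]

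The set of such functions $f$ contains the set of permutations $\pi$ as a subset. Since $A$ has nonnegative entries by assumption, every term $\prod_i A_{i,f(i)}$ in the expansion is nonnegative, so restricting the sum to permutations only decreases it:
\[ \prod_i r_i \;\ge\; \sum_{\pi} \prod_i A_{i,\pi(i)} \;=\; \mathrm{perm}(A), \]
which is the desired inequality.

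There is no real obstacle here; the only assumption used is that the entries of $A$ are nonnegative, which is built into the hypothesis that $A$ is a positive matrix. The statement and proof are a standard first observation about the permanent, and no additional tools (such as inequalities of Bregman or Van der Waerden type) are needed for the upper bound in this form.
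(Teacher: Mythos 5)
Your proof is correct and is precisely the ``direct verification from the definition'' that the paper alludes to without writing out: expanding $\prod_i r_i$ over all functions $f:[\![1,N]\!]\to[\![1,N]\!]$ and restricting to permutations, using only nonnegativity of the entries. Nothing further is needed.
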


\begin{lemma}
\label{l:pers}
Let $D \subset \mathbb{R}^d$ be a compact domain, and let $\Psi$ be an almost surely continuous centred Gaussian field on $\mathbb{R}^d$ with covariance kernel satisfying
\[ \kappa(s, t) = \mathbb{E}[\Psi(s)\Psi(t)] > 0 \]
for all $s, t \in D$. Then there exists a $c > 0$ such that as $t \to \infty$ eventually 
\[ \mathbb{P}[ \Psi|_D > t ] > e^{- c t^2  }  . \]
\end{lemma}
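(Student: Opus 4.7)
The plan is a Cameron--Martin shift argument. To construct the shift, fix any $s_0 \in D$ and consider $h_0(s) := \kappa(s_0, s)$, which by the reproducing property lies in the RKHS $\mathcal{H}_D$ of $g|_D$ with $\|h_0\|_{\mathcal{H}_D}^2 = \kappa(s_0, s_0)$. The hypothesis $\kappa > 0$ on $D \times D$ combined with continuity of $\kappa$ and compactness of $D$ yields $c_0 := \min_{s \in D} h_0(s) > 0$. For each $t > 0$ I would then set $h_t := (2t/c_0)\, h_0$, so that $h_t \geq 2t$ pointwise on $D$ while $\|h_t\|_{\mathcal{H}_D}^2 = C t^2$ for the constant $C := 4\kappa(s_0, s_0)/c_0^2$.

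Next I would invoke the Cameron--Martin formula applied to the Gaussian measure induced by $g|_D$ on $C(D)$, giving
\[ \mathbb{P}[g|_D > t] = \mathbb{E}\bigl[\exp\bigl(L_{h_t} - \tfrac{1}{2}\|h_t\|_{\mathcal{H}_D}^2\bigr)\, \id_{\{(g+h_t)|_D > t\}}\bigr], \]
where $L_{h_t}$ is the measurable linear functional corresponding to $h_t$, a centred Gaussian of variance $\|h_t\|_{\mathcal{H}_D}^2$ under the law of $g$. Since $h_t \geq 2t$ on $D$, the event $\{g|_D > -t\}$ is contained in $\{(g+h_t)|_D > t\}$, and restricting the indicator to this smaller event yields
\[ \mathbb{P}[g|_D > t] \geq \mathbb{E}\bigl[\exp\bigl(L_{h_t} - \tfrac{1}{2}\|h_t\|_{\mathcal{H}_D}^2\bigr)\, \id_{\{g|_D > -t\}}\bigr]. \]

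To finish, I would further restrict the expectation to the event $\{L_{h_t} \geq -\|h_t\|_{\mathcal{H}_D}\}$, on which $\exp(L_{h_t}) \geq e^{-\sqrt{C}\,t}$. Because $L_{h_t}/\|h_t\|_{\mathcal{H}_D}$ is a standard Gaussian, this event has a fixed probability strictly greater than $1/2$; and because $g|_D$ is almost surely bounded below on the compact set $D$, the probability $\mathbb{P}[g|_D \leq -t]$ tends to zero as $t \to \infty$. A union bound therefore keeps the joint probability above $1/4$ for large $t$, giving
\[ \mathbb{P}[g|_D > t] \geq \tfrac{1}{4}\exp\bigl(-\sqrt{C}\,t - \tfrac{C}{2}t^2\bigr), \]
which exceeds $e^{-ct^2}$ eventually for any $c > C/2$. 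I do not anticipate a substantive obstacle; the only technicality is setting up the Cameron--Martin formula rigorously for the Gaussian measure on $C(D)$, which is justified by the assumed almost sure continuity of $g$, and the positivity of $\kappa$ is used precisely to ensure the existence of an RKHS element bounded below by a positive constant on $D$.
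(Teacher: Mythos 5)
Your Cameron--Martin argument is correct in substance and reaches the same quadratic bound as the paper, but it does so by a noticeably heavier route. The paper's own proof avoids the Cameron--Martin theorem entirely: it conditions on the single-point value $g(s_0) = \ell$, invokes Gaussian regression to write $g(s) = \ell\,\kappa(s_0,s) + h(s)$ with $h$ an independent a.s.\ continuous centred field, and observes that $\{g(s_0) > 2t/c_1\} \cap \{\|h\|_\infty < t\}$ forces $g|_D > t$ because $\kappa(s_0,\cdot) \geq c_1 > 0$ on $D$. The quadratic rate then comes straight from $1 - \Phi(2t/c_1)$, and the residual term is controlled by the trivial fact that $\|h\|_\infty$ is a.s.\ finite. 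Your proof implements the same underlying idea---tilt the measure in the $\kappa(s_0,\cdot)$ direction using positivity and compactness to make $\kappa(s_0,\cdot)$ bounded below---but routes it through the Cameron--Martin change of measure. That buys nothing here and costs a little: you have to control the Radon--Nikodym density $L_{h_t}$ as an extra term, and you must correctly invoke the functional Cameron--Martin theorem on $C(D)$ rather than just the finite-dimensional (indeed one-dimensional) regression formula the paper uses. It is worth noting that you have a sign slip in the change-of-measure formula: the standard identity gives $\mathbb{P}[g|_D > t] = \mathbb{E}\bigl[\exp\bigl(-L_{h_t} - \tfrac{1}{2}\|h_t\|^2_{\mathcal{H}_D}\bigr)\,\id_{\{(g+h_t)|_D > t\}}\bigr]$, not $\exp\bigl(+L_{h_t} - \tfrac12\|h_t\|^2\bigr)$. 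This is immaterial to the conclusion because $L_{h_t}$ is a centred Gaussian, so restricting to $\{L_{h_t} \leq \|h_t\|_{\mathcal{H}_D}\}$ instead of $\{L_{h_t} \geq -\|h_t\|_{\mathcal{H}_D}\}$ yields exactly the same lower bound by symmetry, but the formula as written should be fixed. The complement bound at the end (that an event of probability $>1/2$ and an event of probability $\to 1$ intersect with probability eventually $>1/4$) is fine.
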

\begin{proof}
Without loss of generality we may assume that $\Psi$ has unit variance and that $\{0 \} \in D$. Since $D$ is compact, we may define
\[ c_1 =  \inf_{s \in D} \kappa(0, s)  > 0 . \]
By Gaussian regression, conditionally on the value of $\Psi(0) = \ell$, 
\begin{equation*}
   \Psi(s) = \ell \kappa(0, s)  + h(s)  ,
   \end{equation*}
where $h$ is an independent almost surely continuous centred Gaussian field. Hence, given the definition of $c_1$, the event $\{\Psi|_D > t\}$ is implied by
\[   \{ \Psi(0) > 2t / c_1 \} \cap \{ \|h\|_\infty < t \}  , \]
and so
\[ \mathbb{P}[ \Psi|_D > t ]  \ge \mathbb{P}( \|h\|_\infty  < t ) \cdot \left(1 - \Phi(2t /c_1) \right) ,   \]
where $\Phi$ denotes the standard Gaussian cdf. To finish, since $ \|h\|_\infty $ is almost surely finite,
\[   \mathbb{P}( \|h\|_\infty < t )  > 1/2     \]
 for sufficiently large $t$, and we get the result.
\end{proof}

\begin{proof}[Proof of Proposition \ref{p:high}]

We begin with the upper bound. Applying Lemma \ref{l:permbound},
\[    \frac{1}{N^d} \log  E^N_\beta  = \frac{1}{N^d} \log  \text{perm}(A^N_\beta) + \log 2 \le \max_i \log r_i + O(1) ,   \]
where $r_i$ denotes the row-sum of the $i^{\rm{th}}$ row of $A$. The row sums of $A^N_\beta$ are bound above by the Riemann sum
\[   R_\beta =  \sum_{z \in \beta \mathbb{Z}^d} e^{-|z| } . \]
Since the function $z \mapsto e^{-|z|}$ is integrable, the Riemann sum $R_\beta$ satisfies, as $\beta \to 0$,
\[ R_\beta = \Theta(\beta^{-d}) , \]
which gives the result.

Turning now to the lower bound, recall that $\mathcal{T}^N_\beta = \beta  [\![ 0, N-1 ]\!]^d \subset \beta \mathbb{Z}^d$, and partition $\mathcal{T}^N_\beta$ into at most $2 (\beta N)^d$ disjoint sets $S_i$ all of diameter at most $2$. Define a matrix $\bar A^N_\beta$ via
\[ (\bar A^N_\beta)_{s,t}  = \begin{cases} 
(A^N_\beta)_{s, t},  &  \text{if } s, t \text{ belong to a common set } S_i,  \\
0, & \text{else.}
\end{cases} \]
Then by the definition of the permanent, 
\[   \frac{1}{N^d} \log  E^N_\beta  = \frac{1}{N^d} \log  \text{perm}(A^N_\beta)  + \log 2 \ge \frac{1}{N^d} \log  \text{perm}(\bar A^N_\beta) + \log 2. \] 
Recall that $\Psi$ denotes the stationary, almost surely continuous, centred Gaussian field on $\mathbb{R}^d$ with the Laplacian covariance kernel \eqref{e:la}. Observe that by Lemma \ref{l:reed}
\[    \text{perm}(\bar A^N_\beta) =  2^{-N^d} \prod_{i }  \mathbb{E} \Big[     \Pi_{ x \in S_i } (X_x^2 + Y_x^2)   \Big]  ,  \]
recalling also that, for a Gaussian vector, coordinates being uncorrelated are equivalent to them being independent. Hence we deduce that
\[   \frac{1}{N^d} \log  E^N_\beta   \ge \frac{1}{N^d}  \sum_i   \log \mathbb{E} \Big[ \Pi_{x \in S_i} (X_x^2 + Y_x^2 )  \Big]  \ge \frac{1}{N^d}  \sum_i   \log \mathbb{E} \Big[ \Pi_{x \in S_i} X_x^2   \Big] .  \]
Applying Lemma \ref{l:pers}, and since the diameters of $S_i$ are bounded and $\Psi$ is stationary, there is a $c > 0$ such that, for all sufficiently small $\beta > 0$ and all $i$,
\[
  \mathbb{E} \Big[   \Pi_{x \in S_i} X_x^2   \Big]    \ge \mathbb{E} \big[   \Pi_{x \in S_i } X_x^2   \: \big| \:  X|_{S_i}  > \beta^{-d/2} \big] \, \mathbb{P} \big[ X|_{S_i}  > \beta^{-d/2}\big]   \ge  (1/\beta^d)^{|S_i|} e^{-c\beta^{-d} }.
\]
 Hence
\[ \frac{1}{N^d} \log  E^N_\beta \ge \frac{1}{N^d} \log(1/\beta^d) \sum_i |S_i|   - \frac{c}{(\beta N)^d}  \# \{ S_i \}  ,\]
and since $\sum_i |S_i| = N^d$ and $ \# \{ S_i \}  \le 2(\beta N)^d$, we conclude that
 \begin{equation*}
 \frac{1}{N^d} \log  E^N_\beta \ge \log(\beta^{-d}) + O(1) . \qedhere
\end{equation*}
\end{proof}


\medskip

\section{Completing the proof of the main results}
\label{s:final}

In this section we complete the proof of Theorems \ref{t:main1}, \ref{t:main2} and \ref{t:part}. We make use of two elementary lemmas, which show how bounds on sequences of convex functions may be transferred to their derivatives.

\begin{lemma}
\label{l:convex1}
Let $h_n : [0, \infty) \to \mathbb{R}$ be a sequence of continuously differentiable and convex functions converging to a limit $h(x) := \lim_{n \to \infty} h_n(x)$.
\begin{enumerate}
\item The limit $h$ is convex and hence differentiable except on a countable set. At each point $x > 0$ such that $h$ is differentiable,
\[ h'(x) = \lim_{n \to \infty} h'_n(x) .\]
\item Suppose $h_n(0) = 0$ for all $n$, $h$ is continuously differentiable at $0$, and 
\[ h'(0) = \lim_{n \to \infty} h'_n(0) .\]
Then for every sequence $x_n \to 0$, as $n \to \infty$,
\[ h_n(x_n) = h'(0) x_n + o(x_n)  \qquad \text{and} \qquad h_n'(x_n) = h'(0) + o(1) . \]
\end{enumerate}
\end{lemma}

\begin{lemma}
\label{l:convex2}
Let $h:  \mathbb{R}^+ \to \mathbb{R}$ be a continuously differentiable convex function. Let $x \in (0, \infty)$ and suppose there exist constants $c_1, c_2 > 0$ such that
\[  - c_2  \le \inf_{y \in [x/2, x]} \Big( h(y) + c_1 \log y \Big) \le \sup_{y \in [x, 2e^{2c_2/c_1} x]} \Big( h(y) + c_1 \log y \Big) \le  c_2  .  \]
Then 
\[  - \frac{c_1 }{ x} \times (1 + c_3)  \le h'(x) \le   - \frac{c_1 }{ x}  \times (1 - c_3) \]
where $c_3 \in (0, 1)$ depends only on $c_1$ and $c_2$. Moreover if $c_2 < c_1/4$ then one may take
\[ c_3 = 2 \sqrt{c_2/c_1}  .  \]
\end{lemma}

We defer the proof of Lemmas \ref{l:convex1} and \ref{l:convex2} to the end of the section. Let us complete the proof of the main results.

\begin{proof}[Proof of Theorem \ref{t:part}]
In the subcritical regime $1/N \ll \beta \ll 1$ the result follows by combining Proposition \ref{p:id}, with Proposition \ref{p:high} (in the general case $d \ge 1$) or Proposition~\ref{p:noncrit} (in the case $d = 1)$. 
 
Let us treat the critical regime $\beta \sim c / N$. Consider the sequence of smooth non-increasing convex functions
\[ h^d_N(c) :=  \frac{1}{N^d} \log Z_N(c/N)  - \frac{1}{N^d} \log (( N^d)!)   \  , \quad c \ge 0 , \]
which satisfy $h^d_N(c) \le 0$ and $h^d_N(0) = 0$. By Propositions \ref{p:id}, \ref{p:vprop} and \ref{p:crit}, the sequence $h^d_N$ converges, as $N \to \infty$, to a continuous, strictly decreasing, convex function $g^d :[0,\infty) \to \mathbb{R}$ whose restriction to $(0, \infty)$ satisfies the conditions of Theorem \ref{t:part}. Since $g^d$ is continuous and $c \mapsto h^d_N(c)$ is monotone, the convergence is uniform on compact sets, and this completes the proof in the critical regime. 

Finally we turn to the supercritical regime $\beta \ll 1/N$. In the general case $d \ge 1$, by the uniform convergence of $h^d_N \to g^d$ we have
\[ h^d_N(c_N) \to  g^d(0) = 0 \]
for any sequence $c_N \to 0$. In the case $d = 1$, recall that $g^1(c)$ is smooth on $c > 0$, and $g^1(c) =  -c/3 + O(c^2)$ as $c \to 0$, and so in particular $\frac{d}{d c} g^1|_{c=0} = -1/3$. We also note that (see Proposition \ref{p:id} and Remark \ref{r:hyper})
\[ \frac{\partial}{\partial c}  h^1_N(c) \Big|_{c = 0} =  \frac{1}{N}   \frac{\partial}{\partial \beta}  \Big(\frac{1}{N}   \log Z_N(\beta) \Big) \Big|_{\beta = 0} = - \frac{1}{N}  D_0^N  \to  -1/3  = \frac{d}{d c} g^1(c) \Big|_{c=0} . \]
Hence by item (2) of Lemma \ref{l:convex1},
\[ h^1_N(c_N) = - c_N / 3 + o(c_N) \]
for any sequence $c_N \to 0$, which completes the proof.
\end{proof}

 \begin{proof}[Proof of Theorem \ref{t:main2}]
 Let us first consider the critical regime $\beta \sim c / N$. Recall the sequence of smooth non-increasing convex functions
\[ h^1_N(c) :=  - \frac{1}{N} \log Z_N(c/N)  - \frac{1}{N} \log (( N)!)   \to g^1(c) \  , \quad c \ge 0 , \]
from the proof of Theorem \ref{t:part}. Define $f^1(c) := -\frac{d}{dc} g^1(c)$, and observe that $f^1$ satisfies all the conditions of Theorem \ref{t:main2} (the fact that $f^1(c)  = \Theta(1/c)$ as $c \to \infty$ follows by applying Lemma \ref{l:convex2} to the function $g^1$,  since $g^1(c) = -  \log c + O(1)$ as $c \to \infty$). Now by Proposition \ref{p:id}, 
\[ \frac{\partial}{\partial c} h^1_N(c) = -\frac{1}{N} D_{c/N}^N  ,\]
and so, by item (1) of Lemma \ref{l:convex1}, for any $c > 0$
\[ \frac{1}{N} D_{c/N}^N  \to  - \frac{d}{d c} g^1(c) = f^1(c) .\]
Since $f^1$ is continuous and $c \mapsto \frac{1}{N} D_{c/N}^N$ is monotone, the convergence is uniform on a neighbourhood of $c$, which completes the proof in the critical regime.

Turning to the supercritical regime $\beta \ll 1/N$, as in the proof of Theorem \ref{t:part} we have that
\[ \frac{\partial}{\partial c}  h^1_N(c) \Big|_{c = 0} =  - \frac{1}{N}  D_0^N  \to  -1/3 = \frac{d}{d c} g^1(c) \Big|_{c=0} . \]
Hence by item (2) of Lemma \ref{l:convex1},
\[  \frac{1}{N}  D_{\beta_N}^N = -\frac{\partial}{\partial c} h^1_N(c) \Big|_{c = \beta_N N}   =  1 / 3 + o(1) \]
for any sequence $\beta_N \ll 1/N$, which completes the proof in this regime.
  
Finally we consider the subcritical regime $1/N \ll \beta \ll 1$, in which it is more convenient to work with the functions
\[ \bar{h}_N(\beta) :=  \frac{1}{N^d} \log Z_N(\beta) - \log 2  + 1    \  , \quad \beta \ge 0 , \]
which satisfy, by Proposition \ref{p:id}, $\frac{\partial}{\partial \beta}  \bar{h}_N(\beta) = - D_{\beta}^N$. Recall that Theorem \ref{t:part} states that 
\[ \bar{h}_N(\beta)  = - \log \beta + O(\beta) + O(1/(\beta N)) \]
for any sequence $1/N \ll \beta \ll 1$. Applying Lemma \ref{l:convex2},
\[  \frac{\partial}{\partial \beta}  \bar{h}_N(\beta) = - 1/\beta \times (1 + O(\beta^{1/2}) + O((\beta N)^{-1/2} )  \]
as required.
\end{proof}

 \begin{proof}[Proof of Theorem \ref{t:main1}]
The results in the critical regime $\beta = c/N$ follow as in the proof of Theorem \ref{t:main2} (although if $d \ge 2$ we know only that $\frac{1}{N} D_{c/N}^N  \to  - \frac{d}{d c} g^d(c) = f^d(c)$ outside a countable set $\mathcal{C}$, and the convergence may not be uniform so we cannot extend the results to $\beta \sim c/N$). 

In the subcritical regime $1/N \ll \beta \ll 1$ we consider the functions
\[ \bar{h}^d_N(\beta) :=  \frac{1}{N^d} \log Z_N(\beta)    \  , \quad \beta \ge 0 ,\]
which satisfy, by Proposition \ref{p:id}, $\frac{\partial}{\partial \beta}  \bar{h}^1_N(\beta) = - D_{\beta}^N$. Recall that Theorem \ref{t:part} states that
\[ \bar{h}^d_N(\beta)  = - d \log \beta + O(1) \]
for any sequence $1/N \ll \beta \ll 1$. Hence, by Lemma \ref{l:convex2},
\[ \frac{\partial}{\partial \beta}  \bar{h}^1_N(\beta) = - \Theta(1/\beta) \]
as required.

The result in the supercritical regime $\beta \ll 1/N$ then follows since $\beta \mapsto D_\beta^N/N$ is non-increasing and $D_0^N/N \le \sqrt{d}$ by definition. 
 \end{proof}
 
 \begin{remark}
We emphasise that in order to obtain deduce, in dimension $d=1$, the asymptotics
\[\frac{1}{N} \log Z_N(\beta) =  \frac{1}{N^d} \log (N!) - \beta N / 3 + o(\beta N) \quad \text{and} \quad D_{\beta}^N   =  N / 3 + o(1) \]
 in the supercritical regime $\beta_N \ll 1/N$ from those in the critical regime $\beta \sim c/N$, it was crucial that we could prove that
\[  \lim_{c \to 0} f^1(c) :=  \lim_{c \to 0} - \frac{d}{d c} g^1(c)  = 1 / 3  \]
is equal to the `hypercube line picking constant'  $c_1 := \lim_{N \to \infty} \frac{1}{N}  D_0^N$ (see Remark~\ref{r:hyper}), since this allowed us to apply item (2) of Lemma \ref{l:convex1}. We do not know whether this is true for general $d \ge 2$.
\end{remark}

We conclude the section by proving Lemmas \ref{l:convex1} and \ref{l:convex2}. For this we use the following elementary lemma:

 \begin{lemma}
\label{l:ra}
Let $I \subset \mathbb{R}$ be an open interval, let $h: I \to \mathbb{R}$ be continuously differentiable and convex, and let $h^-, h^+: I \to \mathbb{R}$ be such that $h^- \le h \le h^+$. Then, for each $x \in I$,
\[h'(x) \in  \left[   \sup_{  \delta > 0 : x-\delta \in I} \frac{ h^-(x) - h^+(x - \delta) }{\delta}  ,  \inf_{\delta > 0 : x + \delta \in I} \frac{ h^+(x + \delta) - h^-(x)  }{\delta}  \right] . \]
\end{lemma} 
\begin{proof}
We prove the upper bound; the proof of the lower bound is similar. Suppose for contradiction that for some $\delta > 0$
\[  h'(x) > \frac{ h^+(x + \delta) - h^-(x)  }{\delta}  .\]
Then, since $h$ is convex (and so in particular $h'$ is non-decreasing), 
\[    h(x + \delta)  - h(x) \ge \delta h'(x) > \delta \times \frac{ h^+(x + \delta) - h^-(x)  }{\delta}  = h^+(x + \delta) - h^-(x)   ,\]
which is in contradiction with $h^- \le h \le h^+$.
\end{proof}
 
 \begin{proof}[Proof of Lemma \ref{l:convex1}]
 \noindent Item (1). It is immediate that the limit $h$ is convex. To prove the convergence $h_n' \to h'$, fix $x  > 0$ and $\delta, \varepsilon \in (0, x)$. Since $h_n \to h$ we have that
 \[   h(y) - \varepsilon \le h_n(y) \le h(y) + \varepsilon  \ , \quad \text{for } y \in \{x - \delta, x , x + \delta \} \]
 for all $n$ sufficiently large. Applying Lemma \ref{l:ra}, 
\[h'_n(x)   \in  \left[  \frac{h(x) - h(x-\delta) - 2\varepsilon}{\delta}  ,   \frac{h(x+\delta) - h(x) + 2\varepsilon}{\delta}  \right] \]
for all $n$ sufficiently large. Sending $\varepsilon \to 0$ and then $\delta \to 0$ yields the result.
 
\noindent Item (2). By assumption $h'$ is continuous and $h_n' \to h'$ in a neighbourhood of $0$. Since $x \mapsto h_n'(x)$ is monotone the convergence is uniform, which gives the second assertion. The first assertion then follows by integrating $h_n$ over $[0, x_n]$.
\end{proof}

\begin{proof}[Proof of Lemma \ref{l:convex2}]
Let $x > 0$ and $c_1, c_2 > 0$ be as in the statement of the lemma. Applying Lemma \ref{l:ra}, 
\begin{equation}
\label{e:convex2}
 h'(x) \in      \left[ -\frac{c_1}{x} \times \Big( \frac{ - \log (1- \delta_1)  + 2c_2/c_1}{\delta_1} \Big)  ,  -\frac{c_1}{x} \times \Big( \frac{  \log (1 + \delta_2)  - 2c_2/c_1}{\delta_2} \Big) \right]  
 \end{equation}
for any $\delta_1 \in (0,1/2]$ and $\delta_2 \in (0, 2 e^{2c_2/c_1} - 1]$. Choosing $\delta_1 = 1/2$ and $\delta_2 = 2 e^{2c_2/c_1} - 1$ 
gives that
\[  h'(x) \in      \left[ -\frac{c_1}{x} \times \Big( 2 \log 2  + 4c_2/c_1 \Big)  ,  -\frac{c_1}{x} \times \Big( \frac{  \log 2 }{ 2 e^{2 c_2/c_1} - 1} \Big) \right]  \]
which proves the first assertion.

For the second assertion, recall that 
\[ \log(1-x)  \le  - x    \quad \text{and} \quad \log(1+x) \ge x - x^2/2 , \]
for all $x > 0$. Inputting these bounds into \eqref{e:convex2} gives
\[  h'(x) \in      \left[ -\frac{c_1}{x} \times \Big( 1 + 2c_2/(c_1 \delta_1) \Big)  ,  -\frac{c_1}{x} \times \Big( 1  - \delta_2/2 - 2c_2/(c_1 \delta_2) \Big)  \right]  . \]
Choosing $\delta_1 = 1/2$ and $\delta_2 = 2 \sqrt{c_2/c_1}$ we have
\[  h'(x) \in      \left[ -\frac{c_1}{x} \times \Big( 1 + 4 c_2/c_1 \Big)  ,  -\frac{c_1}{x} \times \Big( 1  - 2 \sqrt{c_2/c_1}  \Big)  \right] , \]
which are valid settings since, by assumption,
\[  \delta_2 = 2 \sqrt{c_2/c_1}   <  1 < 2 e^{2c_2/c_1} - 1 .\]
Since $4 c_2/c_1  <    2 \sqrt{c_2/c_1}$ by assumption, the proof is complete.
\end{proof}


\smallskip

\bibliographystyle{alpha}
\bibliography{paper}

\end{document}